\documentclass[12pt,a4paper,twoside,leqno]{amsart}

\textheight=8.1 true in
 \setcounter{page}{1}
  \topmargin 30pt

\usepackage[latin1]{inputenc}


\usepackage{amsmath}
\usepackage{amsfonts}
\usepackage{amssymb}
\usepackage{amstext}
\usepackage{amsbsy}
\usepackage{color}

\newtheorem{corollary}{\sc Corollary}[section]
\newtheorem{theorem}{\sc Theorem}[section]
\newtheorem{lemma}{\sc Lemma}[section]
\newtheorem{proposition}{\sc Proposition}[section]
\newtheorem{remark}{\sc Remark}[section]
\newtheorem{definition}{\sc Definition}[section]

\newcommand{\thmref}[1]{Theorem~\ref{#1}}

\newcommand{\lemref}[1]{Lemma~\ref{#1}}

\newcommand{\propref}[1]{Proposition~\ref{#1}}
\newcommand{\defref}[1]{Definition~\ref{#1}}

\def\Hip{\mathbb H}

\let\hip=\Hip

\def\Na{\mathbb N}

\newcommand{\R}{\mathbb R}
\newcommand{\M}{\mathbb M}

\newcommand{\wt}{\widetilde}
\newcommand{\wh}{\widehat}

\def\Gai{\Ga_infty}

\let\h=\hip

\let\re=\R

\def\Mc{\mathcal M}

\let\Pc =\P

\def\Sc{\mathcal S}

\def\Nc{\mathcal N}
\def\Dc{\mathcal D}
\def\Qc{\mathcal Q}

\def\Om{\Omega}

\def \la{\lambda}
\def \a{\alpha}
\def \om{\omega}
\def \ga{\gamma}

\def \a{\alpha}

\def \ga{\gamma}
\def \Ga{\Gamma}

\def \e{\varepsilon}

\def\Gai{\Ga_\infty}

\def \ep{\epsilon}
\def \la{\lambda}

\def\vphi{\varphi}

\def \om{\omega}
\def \Om{\Omega}

\def \dd   {\displaystyle}

\def\leqs{\leqslant}
\def\geqs{\geqslant}

\DeclareMathOperator{\inter}{int}

\def\rmd{\mathop{\rm d\kern -1pt}\nolimits}
\def\rme{\mathop{\rm e\kern -1pt}\nolimits}

\def\e{\varepsilon}

\DeclareMathOperator{\diver}{div}

\DeclareMathOperator{\dist}{dist}

\def \noi {\noindent}

\def\bel{ \medskip
 \centerline{$ \ast \hbox to 1.0cm{}\ast \hbox to 1.0cm{}\ast $}
}

\def\overl{\overline}

\def\goto{\rightarrow}

\def\longerrightarrow{-\kern-5pt\longrightarrow}

\def\vphi{\varphi}
\def\star{\lower 1pt\hbox{*}}
\def \nulset {
\raise 1pt\hbox{ \hskip -3pt$\not$\kern -0.2pt \raise
.7pt\hbox{${\scriptstyle\bigcirc}$}}}

\def \ep{\epsilon}

\newcommand{\hd}{\mathbb{H}^2}

\newcommand{\hi}[1]{\mathbb{H}^#1}

\newcommand{\pain}{\partial_{\infty}}

\newcommand{\ov}[1]{\overline{#1}}

\let\leq=\leqslant
\let\geq=\geqslant

\let\ep=\e



\begin{document}

\title[minimal]
{Minimal Graphs in $ {\mathbb H}^n\times \mathbb R$ and
$\R^{n+1}$}

\author[  R. Sa Earp  and E. Toubiana]
{\scshape R. Sa Earp and E. Toubiana}

 \address{Departamento de Matem\'atica,
  Pontif\'\i cia Universidade Cat\'olica do Rio de Janeiro, Rio de Janeiro,  22453-900 RJ,
 Brazil }\email{earp@mat.puc-rio.br}

\address{Institut de Math{\'e}matiques de Jussieu, Universit{\'e} Paris
VII, Denis Diderot, Case 7012,
         2 Place Jussieu,
         75251 Paris Cedex 05, France}
\email{toubiana@math.jussieu.fr}
\thanks{The first author wish to thank {\em Laboratoire G\'eom\'etrie et
Dynamique de l'Institut de Math\'ematiques de Jussieu} for the
kind hospitality and support. The authors would like to thank
CNPq, FAPERJ (``Cientistas do Nosso Estado''), PRONEX of Brazil
and Accord Brasil-France, for partial financial support}

\date{\today}

\subjclass[2000]{53C42, 35J25}

 \maketitle


 \begin{flushright}

 En l'honneur de Pierre Bérard and Sylvestre Gallot
 \end{flushright}

\bigskip

\begin{abstract}

\noi

We construct  geometric barriers for minimal graphs in
$\hip^n\times \R.$

We prove the existence and uniqueness of a solution of the
vertical minimal equation in the interior of a convex polyhedron
in $\hip^n$ extending
 continuously to the interior of each face, taking    infinite boundary data on one  face and  zero boundary
value data  on the other faces.

 In $\hip^n\times \R$, we solve the Dirichlet problem for the
vertical minimal equation in a  $C^0$   convex domain
$\Om\subset\hip^n$ taking arbitrarily continuous  finite boundary
and asymptotic boundary data.

 We prove the existence
of another Scherk type hypersurface, given by the solution of the
vertical minimal equation in the interior of  certain admissible
polyhedron taking
 alternatively infinite values $+\infty$ and $-\infty$ on adjacent
 faces of this polyhedron.

We establish analogous results for minimal graphs when the ambient
is the Euclidean space $\R^ {n+1}$.

\end{abstract}

\maketitle


\medskip

\textbf{\sc{Key words:}~} {\small Dirichlet problem, minimal
equation, vertical graph, Perron process, barrier, convex domain,
asymptotic boundary, translation hypersurface, Scherk
hypersurface.}


\section{Introduction }\label{intro}

In Euclidean space, H. Jenkins and J. Serrin \cite{JS2} showed
that in a bounded $C^2$ domain $D$ the Dirichlet problem for the
minimal equation in $D$ is solved for $C^2$ boundary data if and
only if the boundary is mean convex. The theorem also holds in the
case that the boundary data is $C^0$ (but the domain is still
$C^2$) by an approximation argument \cite[Theorem 16.8]{GT}. On
the other hand, the authors solved the Dirichlet problem in
$\hip^3$ for the vertical minimal surface equation over a $C^0$
convex domain $\Om$ in $\pain \hip^3,$ taking any prescribed
continuous boundary data on $\partial \Om$ ~\cite{asi}. There are
also in this context the general results proved by M. Anderson
~\cite{An1} and ~\cite{An2}.

In this paper we study the vertical minimal equation equation in
$\hip^n\times \R$ (\defref{vgr}) in the same spirit of our
previous work when $n=2$ \cite{SE-T}. In that  paper the authors
have given a full description of the minimal surfaces in $\hd
\times \R$ invariant by translations (cf \cite{Sa}). Afterwards,
inspired on this construction, P. B\'erard and the first author
\cite{B-SE} have given the minimal translation hypersurfaces in
$\hi n \times \R$ and they showed that the geometric behavior is
similar to the two dimensional case. There is also a one parameter
family of such hypersurfaces, denoted again by $M_d,\ d>0$. For
instance, $M_1$ is a vertical graph over an open

half-space of $\hip^n$ bounded by a  geodesic hyperplane $\Pi$,
taking infinite boundary value data on $\Pi$ and zero asymptotic
boundary value data. We show that the hypersurface $M_1$ provides
a barrier to the Dirichlet problem at any point of the asymptotic
boundary of $\Om.$ Moreover, we prove that the hypersurfaces $M_d$
($d<1$) give a barrier to the Dirichlet problem at any strictly
convex point of the finite boundary of $\Om.$

We prove the existence and the uniqueness of rotational Scherk
hypersurfaces in $\hip^ n\times \R$ and we prove that these
hypersurfaces give a barrier to the Dirichlet problem at any
convex point.

Given an admissible convex polyhedron (\defref{pol}), we prove the
existence and uniqueness of a solution of the vertical minimal
equation in $\inter (\Pc)$ extending continuously to the interior
of each face,  taking infinite boundary value on one face and zero
boundary value data on the other faces.
 We call these minimal hypersurfaces in
$\hip^n\times \R$ by {\em  first Scherk type $($minimal$)$
hypersurface}. The hypersurface $M_1$ above plays a crucial role
in the construction.

Using the rotational Scherk hypersurfaces as barriers, we solve
the Dirichlet problem for the minimal vertical equation in a
bounded $C^0$ convex domain $\Om\subset\hip^n$ taking arbitrarily
continuous boundary data. Furthermore, using the hypersurface
$M_1$ as well, we are able to solve the Dirichlet problem for the
minimal vertical equation in a $C^0$ convex domain
$\Om\subset\hip^n$ taking arbitrarily continuous data along the
finite and asymptotic boundary.

We prove the existence of another Scherk type hypersurface, that
we call {\em Scherk second type  hypersurfaces},  given by the
solution of the vertical minimal equation in the interior of a
certain   polyhedron taking alternatively infinite values
$+\infty$ and $-\infty$ on adjacent faces of this polyhedron.
Those polyhedra may be chosen convex or non convex.

We establish also that the  above results, except the statements
involving the asymptotic boundary, hold for minimal graphs in $\R^
n\times \R=\R^ {n+1}.$

\vskip2mm

 Given  a non convex admissible domain $\Om\subset\hip^n$
and given certain geometric conditions on the asymptotic boundary
data $\Gamma_\infty \subset \pain \hi n \times~\R$, we prove the
existence of a minimal graph in $\hip^n\times \R$ whose finite
boundary is $\partial \Om$ and whose asymptotic boundary data is
$\Ga_\infty$.

\vskip2mm

 A further interesting open problem  is to prove a
  ``Jenkins-Serrin'' type results in $\hip^n\times \R$.
  When $n=2$ this task was carried
 out, for instance,
 by B.  Nelli and H. Rosenberg ~\cite{NR} or by L. Mazet,
 M. M. Rodriguez and H. Rosenberg~\cite{MRR}. Recently, A. Coutant \cite{Co}, under the
supervision of F. Pacard, has obtained Scherk type hypersurfaces
in
 $\R^{n+1}$ using a different approach.

\vskip2mm

The knowledge of the $n$-dimensional hyperbolic geometry is
usefull in this paper. The reader is referred to \cite{cas}.

{ The authors are grateful to the referee for his valuable
observations. }
\section{minimal hypersurfaces invariant by hyperbolic
translations in $\hi n \times \R$}\label{S.Md}

We recall shortly the geometric description of the family $M_d$ of
translation hypersurfaces. First consider a fixed geodesic
hyperplane $\Pi$ of $\hi n$. Let $O\in \Pi$ be any fixed point and
let $\gamma \subset \hi n$ be the complete geodesic through $O$
orthogonal to $\Pi$.

For any $d>0$, the hypersurface $M_d$ is generated by a curve in
the vertical geodesic two-plane $\gamma \times \R$. The orbit of a
point of the generating curve at level $t$ is the equidistant
hypersurface of $\Pi$ in $\hi n \times \{ t\}$ passing through
this point.

 As we said in the introduction, for $d=1$, the hypersurface $M_1$ is a complete non entire
vertical graph over a half-space of $\hi n \times \{0\}$ bounded
by $\Pi$, taking infinite value data on $\Pi$ and zero asymptotic
boundary value data.

For any $d<1$, the hypersurface $M_d$ is an entire vertical graph.
For $d>1$, $M_d$ is a bi-graph over the exterior of an equidistant
hypersurface in $\hip^n=\hi n \times \{0\}$.

The generating curve of $M_d$  is given by the following explicit
form:
\begin{equation}\label{trl}
t= \lambda(\rho)=\int_a^\rho \frac{d}{\sqrt{\cosh^{2n-2} u
 -d^2}}du,\qquad \text{($a\geqs 0$)}
\end{equation}
where $\rho $ denotes the signed distance on $\gamma$ with respect
to the point $O$. More precisely: if $d>1$ then $a>0$ satisfies
$\cosh^{n-1}(a)=d$ and $\rho \geqs a$, if $d=1$ then $\rho \geqs
a>0$ and if $d<1$ then $a=0$ and $\rho \in \R$. Observe that { if}
$d<1$ then $\lambda$ is an odd function of $\rho \in \R$.

It can be proved in the same way as in Proposition 2.1 of
\cite{SE-T} that for any $\rho >0$ we have
\begin{equation}\label{mdp}
\lambda (\rho)   \goto +\infty, \quad \textrm{if}\quad d\to 1\quad
(d\not=1). \qquad\text{($M_d$\,\textrm{-} Property)}
\end{equation}

\section{\sc Vertical minimal equation in $\hi n \times \R$ }

\begin{definition}[{\bf {\em Vertical graph}}]\label{vgr} ${}$
 Let $\Om\subset M$ be a domain in a  $n$-dimensional Riemannian
manifold $M$ and let $u:\Om\goto \R$ be a $C^2$  function on
$\Om$.
 A {\em
vertical graph} in the product space $M\times \R$ is a set
$G=\{(x,u(x))\mid x\in \Om\}.$ We call $u$ the height function.

Let $X$ be a vector field tangent to $M.$ We denote by
$\nabla_{\!\!M} u$ and by $\diver_{\!M} X$ the gradient of $u$ and
the divergence of $X$, respectively. We define
$W_{\!\!M}u:=\sqrt{1+\|\nabla_{\!\!M}u\|_M^2 }$.

\end{definition}

The following proposition is straightforward but we will write it
in a
 suitable form  to establish  the {\em reflection principle} we need.

\begin{proposition}[{\bf {\em Mean curvature equation in $M\times
\R$}}]\label{P.equation}
 Assume that the domain $\Om\subset M$ in  coordinates
 $(x_1,\ldots,x_n)$ is endowed by a conformal metric
 $\la^2(x_1,\ldots, x_n)\left(dx_1^2+\cdots+dx_n^2\right)$. Let $H$
 be the mean curvature of a  vertical graph $G$.  Then the height
function $u(x_1,\ldots,x_n)$   satisfies the following equation

\begin{equation}\label{meaequ}
\begin{split}
&nH=\diver_{\!\!M}\left(\frac{\nabla_{\!\!M}u}{W_{\!\!M}u}\right):= \Mc (u)\\
&=\sum\limits_{i=1}^n\frac{n\la_{x_i}u_{x_i}}{\la^3\sqrt{1+\la^{-2}\|\nabla
u\|^2_{ {\R^n}} }  } +\sum\limits_{i=1}^n\frac {\partial}{\partial
x_i}\left(\frac{\la^{\!-2}\, u_{x_i}}{\sqrt{1 +\la^{\!-2}\|\nabla u\|^2_{{\R^n}}}} \right)\\
&\text{\rm(Mean curvature equation)}\\
\end{split}
\end{equation}
\end{proposition}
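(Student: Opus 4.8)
The plan is to realize the graph $G$ as a regular level set inside the product manifold $M\times\R$ and to read off its mean curvature as the ambient divergence of the associated unit normal field. Concretely, endow $\Om\times\R$ (coordinates $(x_1,\dots,x_n,t)$) with the product metric $g=\la^2(dx_1^2+\cdots+dx_n^2)+dt^2$, and set $F(x_1,\dots,x_n,t):=u(x_1,\dots,x_n)-t$, so that $G=F^{-1}(0)$. Since the $t$-component of $\nabla F$ equals $-1$, this level set is regular, with unit normal $\nu:=\nabla F/\norm{\nabla F}$, and I would invoke the standard identity $nH=\diver_{M\times\R}\nu$, valid on $G$, where the right-hand side is the ambient divergence of the vector field $\nu$ defined on all of $\Om\times\R$. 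This identity holds because $\diver_{M\times\R}\nu=\diver_G\nu+\langle\nabla_\nu\nu,\nu\rangle$ and the last term vanishes since $\norm\nu\equiv1$; it reduces the proof to one coordinate computation.

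Carrying out that computation: from $g^{ij}=\la^{-2}\de^{ij}$ ($1\leqs i,j\leqs n$) and $g^{tt}=1$ one gets $(\nabla F)^i=\la^{-2}u_{x_i}$ and $(\nabla F)^t=-1$, whence $\norm{\nabla F}^2=\la^2\sum_i\la^{-4}u_{x_i}^2+1=1+\la^{-2}\norm{\nabla u}^2_{\R^n}$, which is exactly $(W_{\!\!M}u)^2=1+\norm{\nabla_{\!\!M}u}_M^2$. Writing $W:=\sqrt{1+\la^{-2}\norm{\nabla u}^2_{\R^n}}$ and using the coordinate formula $\diver_{M\times\R}X=\tfrac1{\sqrt{\mod g}}\sum_A\partial_A(\sqrt{\mod g}\,X^A)$ with $\sqrt{\mod g}=\la^n$, and noting that neither $\la$ nor $W$ nor any $(\nabla F)^A$ depends on $t$ (so the $A=t$ term drops out), I obtain $nH=\la^{-n}\sum_{i=1}^n\partial_{x_i}\!\bigl(\la^{n-2}u_{x_i}/W\bigr)$. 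Expanding this by the Leibniz rule and using $\partial_{x_i}\la^n=n\la^{n-1}\la_{x_i}$ splits it into $\sum_i n\la_{x_i}u_{x_i}/(\la^3W)$ plus $\sum_i\partial_{x_i}(\la^{-2}u_{x_i}/W)$, which is exactly \eqref{meaequ}; the same computation restricted to the coordinates $x_1,\dots,x_n$ identifies the middle expression in \eqref{meaequ} with $\diver_{\!\!M}(\nabla_{\!\!M}u/W_{\!\!M}u)=\Mc(u)$.

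The one genuinely delicate point --- and the thing to pin down before anything else --- is the bookkeeping of signs and normalizations: one must fix the orientation of $G$ (the sign in $\nu=\nabla F/\norm{\nabla F}$) together with the convention ``$nH=$ sum of the principal curvatures of $G$'', so that the level-set identity reads with exactly the sign appearing in the statement; once that convention is adopted the remainder is routine differentiation. As an alternative route that avoids the level-set identity, I could instead parametrize $G$ directly by $x\mapsto(x,u(x))$, compute the first fundamental form $g_{ij}=\la^2\de_{ij}+u_{x_i}u_{x_j}$ (its determinant being $\la^{2n}W^2$ by the matrix determinant lemma) and the second fundamental form, and trace; this reproduces \eqref{meaequ} at the cost of a longer calculation, but with the bonus of making visible the \emph{suitable form} alluded to just before the statement, namely the clean separation of the metric term $\sum_i n\la_{x_i}u_{x_i}/(\la^3W)$ from the pure divergence term $\sum_i\partial_{x_i}(\la^{-2}u_{x_i}/W)$, which is what will make the reflection principle transparent.
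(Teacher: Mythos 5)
Your proof is correct and follows essentially the same route as the paper: the paper also obtains the equation from the identity ``$\pm nH=\diver_{M\times\R}$ of the extended unit normal'' (your $\nu=\nabla F/\norm{\nabla F}$ is exactly the opposite of the paper's upper normal $N=-\nabla_{\!\!M}u/W_{\!\!M}u+(1/W_{\!\!M}u)\,\partial_t$, so the signs agree), followed by the same reduction to a coordinate divergence computation. The only cosmetic difference is bookkeeping: the paper splits $N$ into horizontal and vertical parts and uses $\diver_{M\times\R}N=\diver_{\!\!M}N^{h}$, while you use the level-set formulation and the volume-form formula with $\sqrt{\mod g}=\la^n$, the $t$-term dropping out for the same reason ($t$-independence).
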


\begin{proof}

Consider in the conformal coordinates $(x_1,\ldots,x_n)$ the frame
field $X_k=\frac{\partial}{\partial x_k}, k=1,\ldots, n.$ Then
 the upper unit normal field $N$ is given by
\begin{equation*}
\dd N=\frac{-\la^{-2}\sum\limits_{i=1}^ n u_{x_i}
\frac{\partial}{\partial x_i} +\frac{\partial}{\partial
t}}{\sqrt{1+\|\nabla u\|_{\!M}^ 2}}=
-\frac{\nabla_{\!\!M}u}{W_{\!\!M}u}
+\frac{1}{W_{\!\!M}u}\,\frac{\partial}{\partial t}.
\end{equation*}
 We call $\dd N^ {{ h}}~:=-\frac{\nabla_{\!\!M}u}{W_{\!\!M}u}$
the  horizontal component of $N$ (lifting of a vector field
tangent to $M$). Now using the properties of the Riemannian
connection, we infer that the divergence of $N$ in the ambient
space $\M\times \R$ is given by
$\dd\diver_{\!M\times\R}N=\diver_{\!\!M} N^{{ h}}.$ On the other
hand we have, $\dd\diver_{\!M\times\R}N=-nH,$ hence we obtain the
first equation in the statement of the proposition. Finally, the
second equation  follows from a simple derivation.
\end{proof}

From Proposition \ref{P.equation}, we deduce the {\em minimal

vertical equation} or simply {\em minimal equation} in
$\hip^n\times \R$ ($\Mc (u)=0$).  We observe that this equation
was  obtained in a more general setting by  Y.-L. Ou
\cite[Proposition 3.1]{O}.

\begin{corollary} [{\bf {\em Minimal  equation in $\hip^n\times\R$}}]

 Let us consider the upper half-space model of hyperbolic space:
 $\hip^n=\{(x_1,\ldots,x_n)\in\R^n \mid x_n>0\}$.
 If $H=0$, then the height function $u(x_1,\ldots,x_n)$ of a vertical minimal graph $G$
 satisfies the following equation

\begin{equation}\label{minequ}
\begin{split}
&\Mc(u):=\diver_{\R^n}\left(\frac{\nabla_{\R^ n} u}{\sqrt{1
+x_n^2(u_{x_1}^2 +\cdots+u_{x_n}^ 2)}}\right) \\
& \qquad \qquad \qquad \qquad  \qquad \qquad \qquad + \frac{(2-n)
u_{x_n}}{x_n\sqrt{(1 +x_n^2(u_{x_1}^2 +\cdots+u_{x_n}^ 2)}}=0, \\
&\text{or equivalently}\\
 &\sum\limits_{i=1}^n\left(1 +x_n^2(u_{x_1}^2
+\cdots+\wh{u_{x_i}^2}+\cdots +u_{x_n}^2)\right)u_{x_i x_i}\quad \\
&\qquad+\frac{(2-n)\!\left( 1 + x_n^2(u_{x_1}^2+\cdots
+u_{x_n}^2)\right) u_{x_n}}{x_n} -
2x_n^2\sum\limits_{i<k}u_{x_i}u_{x_k}u_{x_i
x_k}\\
&\hphantom{oooo00}-x_nu_{x_n}\left(u_{x_1}^2+\cdots
+u_{x_n}^2\right)=0\qquad \text{\rm (Minimal equation)}\\
\end{split}
\end{equation}

\end{corollary}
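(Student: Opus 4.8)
The plan is to specialize the general mean curvature equation~(\ref{meaequ}) of Proposition~\ref{P.equation} to the conformal factor of the upper half-space model and then impose $H=0$. Concretely, I would take $\lambda(x_1,\dots,x_n)=1/x_n$, so that $\lambda_{x_i}=0$ for $i<n$ and $\lambda_{x_n}=-1/x_n^2$, and abbreviate $W=\sqrt{1+x_n^2(u_{x_1}^2+\cdots+u_{x_n}^2)}=W_{\!\!M}u$. Substituting these into the first sum of~(\ref{meaequ}) kills every term except the one with $i=n$, which equals $-\,nx_nu_{x_n}/W$; this is the only place where the non-flatness of the metric enters the zeroth order part of the operator.

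Next I would expand the divergence sum $\sum_{i=1}^n\partial_{x_i}\!\left(x_n^2u_{x_i}/W\right)$, carefully separating the contribution of the explicit dependence $\partial_{x_i}(x_n^2)=2x_n\,\delta_{in}$ from the remaining derivatives. The product rule yields three groups of terms: the Euclidean divergence $\diver_{\R^n}\!\left(\nabla_{\R^n}u/W\right)$ (once the common factor $x_n^2$ is pulled out), a term $2x_nu_{x_n}/W$ coming from differentiating $x_n^2$, and the terms arising from differentiating $W$, which reassemble into $-\bigl(x_nu_{x_n}(u_{x_1}^2+\cdots+u_{x_n}^2)+x_n^2\sum_{i,k}u_{x_i}u_{x_k}u_{x_i x_k}\bigr)/W^3$. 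Adding the leftover $-\,nx_nu_{x_n}/W$ from the first sum and collecting the terms linear in $u_{x_n}$, whose combined coefficient is $2-n$, one obtains, after factoring out $x_n^2>0$, exactly the first displayed form of $\Mc(u)$ in the statement; since the overall factor $x_n^2$ is harmless, $H=0$ is equivalent to the vanishing of that expression.

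To reach the second, polynomial form, I would clear denominators by multiplying $\Mc(u)=0$ by $W^3$, replace $W^2$ by $1+x_n^2(u_{x_1}^2+\cdots+u_{x_n}^2)$, and split the Hessian contraction $\sum_{i,k}u_{x_i}u_{x_k}u_{x_i x_k}$ into its diagonal part $\sum_iu_{x_i}^2u_{x_i x_i}$ and twice its strictly upper part $\sum_{i<k}u_{x_i}u_{x_k}u_{x_i x_k}$. The diagonal part merges with $W^2\De u=W^2\sum_iu_{x_i x_i}$, turning the coefficient of each $u_{x_i x_i}$ into $1+x_n^2(u_{x_1}^2+\cdots+\wh{u_{x_i}^2}+\cdots+u_{x_n}^2)$, and the remaining terms are precisely the last three summands in the statement.

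Conceptually I expect no real obstacle: the corollary is a direct computation from Proposition~\ref{P.equation}. The only points that demand attention are bookkeeping ones — that $\lambda^{-2}=x_n^2$ itself depends on $x_n$ and therefore must be differentiated inside the divergence, and that the symbol $\Mc(u)$ in the corollary denotes the operator of Proposition~\ref{P.equation} rescaled by $\lambda^{-2}$, which is irrelevant when $H=0$.
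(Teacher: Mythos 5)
Your proposal is correct and follows exactly the route the paper intends: the corollary is deduced directly from Proposition~\ref{P.equation} by substituting the conformal factor $\la=1/x_n$ of the half-space model, setting $H=0$, factoring out $x_n^2$, and then clearing denominators (multiplying by $W^3$ and splitting the Hessian contraction) to get the polynomial form. The computations you outline, including the $(2-n)$ coefficient from combining $-nx_nu_{x_n}/W$ with $2x_nu_{x_n}/W$ and the observation that the rescaling of $\Mc$ by a power of $\la$ is irrelevant when $H=0$, all check out.
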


For example the hypersurfaces $M_d$, $d\in (0,1)$, are entire
vertical graphs whose the height function satisfies Equation
(\ref{minequ}). Other examples are provided by the half part of
the hypersurfaces $M_d$, $d>1$, and the half part of the
$n$-dimensional catenoid, \cite{B-SE} and \cite{SE-T}.

\smallskip

Now we state the classical maximum principle and uniqueness for
the equation (\ref{minequ}).

\begin{remark}[{\bf{\em Classical maximum principle}}]\label{clm}
{\em Let $\Omega \subset \hi n$ be a bounded domain and let $g_1,
g_2:\
\partial \Omega  \rightarrow \R$ be continuous functions
satisfying $g_1 \leqs g_2$. Let $u_i : \overline \Omega
\rightarrow \R$ be a continuous extension of $g_i$ on $\overline
\Omega$ satisfying the minimal equation $(\ref{minequ})$ on
$\Omega$, $i=1,2$, then we have $u_1\leqs u_2$ on $\Omega$.
Consequently, setting $g_1=g_2$, there is at most one continuous
extension of $g_1$ on $\overline \Omega$ satisfying the minimal
surface equation (\ref{minequ}) on $\Omega$.

\smallskip

We will need also a maximum principle involving the asymptotic
boundary.

Let $\Omega \subset \hi n$ be an unbounded domain and let $g_1,
g_2:\
\partial \Omega \cup \pain \Omega \rightarrow \R$ be bounded functions
 satisfying $g_1 \leq g_2$. Assume that $g_1$ and $g_2$ are
continuous on $\partial \Omega$. Let $u_i :  \Omega \cup \partial
\Omega \rightarrow \R$ be a continuous extension of $g_i$
satisfying the minimal equation $(\ref{minequ})$ on $\Omega$,
$i=1,2$, such that for any $p\in \pain \Omega$ we have
\begin{equation*}
\limsup _{q\to p}u_1(q)\leq g_1(p)\leq g_2(p)\leq \liminf _{q\to
p}u_2(q),
\end{equation*}
then we have $u_1\leqs u_2$ on $\Omega$.

We observe that this maximum principle holds  assuming the weaker
assumptions $\Mc (u_1) \geq 0$  and  $\Mc (u_2) \leq 0$ in
$\Omega$ (instead of $\Mc (u_1)=\Mc (u_2)=0$). }
\end{remark}

\smallskip

We shall need in the sequel the following important result of J.
Spruck.

\begin{remark}[{\bf {\em Spruck's result on graphs in $\hip^n\times \R$}}]\label{spr}
${}$ {\em We remark that among other pioneering  and general
results on $H$-graphs in $M\times \R$, J. Spruck obtained interior
a priori gradient
 estimates depending on a priori height estimates and the distance to the
boundary, \cite[Theorem 1.1]{Sp}. Combining this with classical
elliptic theory one obtains a {\em compactness principle}:  any
bounded sequence $(u_n)$ of solutions of Equation (\ref{minequ})
on a domain $\Omega \subset \hi n$ admits a subsequence that
converges uniformly on any compact subset of $\Omega$ to a
solution $u$ of Equation (\ref{minequ}) on $\Omega$.}
\end{remark}

\begin{lemma}[{\bf {\em Reflection principle for minimal graphs in
$\Hip^n\times\R$}}]\label{ref}

{} Let $\Omega\subset\hip^n$ be a domain whose boundary contains
an open set $V_\Pi$ of a geodesic hyperplane $\Pi$ of $\hip^n.$
Assume that $\Om$ is contained in one side of $\Pi$ and that
$\partial \Om\cap \Pi=\overl{V_\Pi}.$

Let $I$ be the reflection in $\hip^n$ with respect to $\Pi$ and
let $u:\Om\goto \R$ be a solution of the minimal equation
\eqref{minequ} that is continuous up to $V_\Pi$ and taking zero
boundary value data on $V_\Pi$. Then $u$ can be analytically
extended across $V_\Pi$ to a function $\wt{u}: \Om\cup V_\Pi\cup
I(\Om)\goto \R$ satisfying the minimal equation \eqref{minequ},
setting $\wt{u}=u(p),$ if $p\in \Om\cup V_\Pi$ and
$\wt{u}=-u(I(p)),$ if $p\in I(\Om).$
\end{lemma}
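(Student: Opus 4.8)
The plan is to mimic the classical Schwarz reflection argument for minimal graphs, adapted to the conformal form of the metric on $\hip^n$. First I would choose Fermi coordinates $(x_1,\dots,x_n)$ adapted to the geodesic hyperplane $\Pi$, so that $\Pi=\{x_n=0\}$ locally, $\Om$ lies in $\{x_n>0\}$, and the reflection $I$ is simply $(x_1,\dots,x_{n-1},x_n)\mapsto(x_1,\dots,x_{n-1},-x_n)$. In such coordinates the hyperbolic metric takes the form $\la^2(x_1,\dots,x_{n-1},x_n^2)(dx_1^2+\cdots+dx_n^2)$ with the conformal factor $\la$ even in $x_n$ (this evenness is the geometric input from $\Pi$ being totally geodesic; equivalently $I$ is an isometry). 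Then by Proposition~\ref{P.equation} the minimal equation $\Mc(u)=0$ has coefficients that are even in $x_n$ and, crucially, the only first-order term with odd behaviour is the one carrying a single $u_{x_n}$ paired with $\la_{x_n}$, which vanishes on $\{x_n=0\}$ to the right order; the upshot is that the operator $\Mc$ is invariant under the substitution $u(x',x_n)\mapsto -u(x',-x_n)$.

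Next I would define $\wt u$ by the stated formula and check it is a weak (distributional) solution of \eqref{minequ} across $V_\Pi$. The two pieces $u$ on $\Om\cup V_\Pi$ and $p\mapsto -u(I(p))$ on $I(\Om)\cup V_\Pi$ each solve the equation on their open domains and agree (both equal $0$) along $V_\Pi$ by the zero boundary hypothesis, so $\wt u$ is continuous on $\Om\cup V_\Pi\cup I(\Om)$. To see it is a genuine solution, test against a smooth compactly supported function $\vphi$ supported near a point of $V_\Pi$: integrate the divergence-form part of $\Mc(\wt u)$ by parts over $\{x_n>\e\}$ and $\{x_n<-\e\}$ separately, and observe that the boundary terms on $\{x_n=\pm\e\}$ cancel in the limit $\e\to0$ because $\nabla_{\!M}\wt u/W_{\!M}\wt u$ has a horizontal part that is even in $x_n$ and a $\partial/\partial x_n$-part that is odd in $x_n$ and hence tends to its boundary value consistently from both sides; the lower-order term integrates to zero in the limit by the same parity. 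Thus $\Mc(\wt u)=0$ weakly on $\Om\cup V_\Pi\cup I(\Om)$.

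Finally I would upgrade the weak solution to an analytic one. Equation \eqref{minequ} is a quasilinear uniformly elliptic PDE (the ellipticity constant is locally controlled once we have a local gradient bound); by interior elliptic regularity for continuous weak solutions — e.g. De Giorgi--Nash--Moser to get $C^{1,\a}$, then Schauder bootstrapping, or directly invoking Spruck's interior gradient estimate from Remark~\ref{spr} to control $|\nabla\wt u|$ near $V_\Pi$ — one concludes $\wt u\in C^\infty$, and since the coefficients of $\Mc$ are real-analytic (the conformal factor $\la$ is), analytic elliptic regularity (Morrey's theorem) gives that $\wt u$ is real-analytic across $V_\Pi$. The real-analyticity then also re-proves, a posteriori, that $\wt u$ genuinely extends $u$ analytically.

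The main obstacle I expect is the parity bookkeeping in the second step: one must verify carefully that, in the chosen Fermi coordinates, the full operator $\Mc$ — including the $(2-n)u_{x_n}/x_n$-type term visible in \eqref{minequ} for the half-space model, and its analogue in general conformal coordinates from \eqref{meaequ} — is truly invariant under $u(x',x_n)\mapsto -u(x',-x_n)$, so that no boundary contribution is left over when integrating by parts. Everything else (continuity of the glued function, and the regularity bootstrap, which is standard once a local gradient bound is in hand via Remark~\ref{spr}) is routine.
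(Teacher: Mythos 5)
Your route (odd gluing, show $\wt u$ is a distributional solution across $V_\Pi$, then upgrade by elliptic regularity) is genuinely different from the paper's, and as written it has a gap at the regularity step. The glued function is only known to be continuous across $V_\Pi$; the minimal operator is uniformly elliptic only where the gradient is under control, and no gradient bound across $V_\Pi$ is available from the hypotheses. Invoking Spruck's estimate (Remark~\ref{spr}) to get that bound is circular: it is an \emph{interior} estimate, valid where $\wt u$ is already known to solve the equation, and applied on $\Om$ alone it degenerates as one approaches $V_\Pi$. So ``De Giorgi--Nash--Moser plus Schauder'' does not apply off the shelf; to make your scheme work you would need either a boundary gradient estimate for $u$ up to $V_\Pi$ (barriers adapted to zero data on a geodesic hyperplane -- extra work you do not sketch), or a quotable regularity theorem for merely continuous weak solutions of the minimal surface equation. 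Two smaller points in your second step: the parity bookkeeping is reversed (since $\wt u$ is odd in $x_n$, the components of the flux tangent to $\Pi$ are odd and the normal component $u_{x_n}/W$ is \emph{even}; it is precisely this evenness that makes the interface terms cancel), and your justification that the flux ``tends to its boundary value from both sides'' assumes traces that are not known to exist -- what actually saves the integration by parts is that the flux vector $\nabla_{\!\!M}\wt u/W_{\!\!M}\wt u$ has norm less than $1$, so the terms on $\{x_n=\pm\e\}$ differ by $O(\e)$ and the strip contribution vanishes. Also, Fermi coordinates normal to $\Pi$ are not conformal; what one really uses is the half-space model with $\Pi$ a vertical coordinate hyperplane, so that $I$ is a Euclidean reflection leaving the conformal factor invariant (the paper takes $\Pi=\{x_1=0\}$).

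The paper avoids all of this with a solve-and-compare argument: around an interior point $p\in V_\Pi$ take a small ball $B_r(p)\subset \Om\cup V_\Pi\cup I(\Om)$, extend $u_{\mid\partial B_r^+(p)}$ oddly to $\partial B_r(p)$, and let $v$ be the solution of the Dirichlet problem on $B_r(p)$ with this continuous boundary data (Spruck, or the proof of Theorem~\ref{perron}). Uniqueness via the maximum principle forces $v$ to be odd, hence $v=0$ on $\Pi\cap B_r(p)$, and a second application of uniqueness on the half-ball gives $v=u$ there; thus $v$ is the analytic extension. All regularity is carried by the solvability of the Dirichlet problem on small balls, and no weak formulation or interface regularity theorem is needed. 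If you want to keep your distributional approach, the missing ingredient you must supply is precisely the step from ``continuous weak solution with bounded flux across a hypersurface'' to ``classical solution''.
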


\begin{proof}
Without loss of generality, we will consider the upper half-space
model for $\hip^n.$
 Let $u:\Omega\subset\hip^n\goto \R$ be a $C^2$ solution
 of the minimal  equation
\eqref{minequ}.

We first note that the proof of the assertion does not depend on
the choice of the geodesic hyperplane $\Pi.$ Therefore, by
applying an ambient horizontal isometry to the minimal graph $G$,
if necessary, we may assume that, without loss of generality,
$\Pi=\{(x_1,x_2\ldots,x_n)\in\hip^n \mid x_1=0\}$ and we assume
that $\Om\subset \Pi^+:=\{(x_1,x_2\ldots,x_n)\in\hip^n \mid
x_1>0\}.$

Notice that  setting $w(x_1,x_2,\ldots,
x_n):=-u(-x_1,x_2,\ldots,x_n)$ for any $(x_1,\ldots,x_n)\in
I(\Om),$ then it is  simple to verify, on account of
\eqref{minequ},  that $w$ also satisfies  the minimal equation on
$I(\Om).$ Now let $p$ be an interior point of $V_\Pi$ and let
$B_r(p)\subset\hip^n$ be a small ball around $p$ of radius $r$
entirely contained in $\Om\cup V_\Pi\cup I(\Om).$ Let $\partial
B_r^+(p):=\partial B_r(p)\cap \Pi^+$ and let $f : \partial
B_r^+(p)\goto \R$ be the restriction of $u$ to $\partial
B_r^+(p).$  We now extend continuously $f$ to the whole sphere
$\partial B_r(p)$  of radius $r$ by odd extension. For simplicity
we still denote  this extension by $f$. We call $v$ the minimal
extension of $f$ on $B_r(p)$ given by Spruck \cite[Theorem
1.5]{Sp}, and also by the proof of Theorem
\ref{perron}-(\ref{Item.minequ}).
 Notice that the
maximum principle ensures that $v$ is the unique solution of the
minimal equation in $B_r(p)$ taking the continuous boundary value
data $f$ at $\partial B_r(p)$. Therefore we have
$v(-x_1,x_2,\ldots, x_n)=-v(x_1,\ldots,x_n)$ for any
$(x_1,\ldots,x_n)\in B_r(p)$ and thus $v(0,x_2,\ldots,x_n)=0$ for
any $(0,x_2,\ldots,x_n)\in V_\Pi$.

 The maximum principle again
guarantees that $v$ coincides with $u$ on $\Om \cap B_r(p)$, hence
the existence of the minimal extension of $f$ ensures the desired
analytic extension of $u$ to $B_r(p)$.
 This completes the proof.
\end{proof}

\section{Perron process for the minimal equation in $\hi n \times \R$}
\label{S.Perron}


The notions of subsolution, supersolution and barrier for equation
(\ref{minequ}) are the same as in the two dimensional case, which
is treated with details by the authors in \cite{asi} and
\cite{SE-T}.

\begin{definition}[{\bf{\em Problem $(P)$}}]\label{probP}${}$
{\em In the product space $\hip^n\times \R,$ we consider the ball
model for the hyperbolic plane $\hip^n$. Let $\Om\subset \hip^n,$
be a domain.


 Let
$g:\partial \Om\cup\partial_\infty\Om \goto \R$ be a bounded
function. We consider the Dirichlet problem, say problem $(P)$,
for the vertical minimal hypersurface equation \eqref{minequ}
taking at any point of $\partial\Om\cup\partial_\infty \Om$
prescribed boundary (finite and asymptotic) value data $g.$  More
precisely,
\begin{equation*}
({P})\begin{cases} u\in C^2\left(\Om\right)  \text{ and } \ \mathcal M (u)= 0 \text{ in }\,\Om, \\
                            \text{for any }  p\in \partial\Om\cup\partial_\infty
                            \Om  \text{ where } g \text{ is
                            continuous, } u \text{ extends} \\
                          \text{  continuously at } p \text{ setting }
                            u(p)=g(p).
            \end{cases}
\end{equation*}

Now, let $u:  \Om\cup\partial \Omega \rightarrow \R$ be a
continuous  function.

Let $U\subset \Omega$ be a closed round ball in $\hip^n$. We then
define the continuous function $M_{U}(u)$ on $\Omega \cup
\partial \Omega$ by:
\begin{align}\label{mu}
M_{U}(u)(x)\begin{cases}
      u(x) &\,\,\, \text{ if $x \in  \Omega \cup \partial \Omega\setminus U$}\\
   \tilde u (x)& \,\,\, \text{ if $x \in U$}
  \end{cases}
\end{align}
where $\tilde u$ is the minimal extension of $u_{\mid \partial U}$
on $\overline U$ given by Spruck \cite[Theorem 1.5]{Sp} and also
by the proof of Theorem \ref{perron}-(\ref{Item.minequ}).

\medskip

We say that $u$ is a {\em subsolution} (resp. {\em supersolution})
of $(P)$ if:
\begin{itemize}
\item [i)]\label{Item.round.disk} For any closed round ball
$U \subset \Omega$ we have\\ $u \leqs M_{U}(u)$ (resp. $u \geqs
M_{U}(u)$).

\vskip1mm

\item [ii)]  $u_{\mid \partial\Om}\; \leqs g$
(resp. $u_{\mid \partial\Om}\;\geqs g)$.

\vskip1mm

\item [iii)] We have
$\limsup_{q\to p} u(q) \leq g(p)$ (resp. $\liminf_{q\to p} u(q)
\geq g(p)$) for any $p\in \pain \Omega$.
\end{itemize}
}
\end{definition}

\begin{remark}\label{r1}{\em
 We now give some classical facts about subsolutions and
supersolutions (cf. \cite{C-H}, \cite{asi},\cite{SE-T}).
\begin{enumerate}
 \item  It is easily seen that if $u$ is $C^2$ on $\Omega$, the condition
 i) above  is equivalent to $\Mc (u) \geqs 0$ for subsolution or
$\Mc (u) \leq 0$ for supersolution.
 \item As usual if $u$ and $v$ are two subsolutions (resp.
supersolutions) of $(P)$ then $\sup (u,v)$ (resp. $\inf (u,v)$)
again is a subsolution (resp. supersolution).
 \item Also if $u$ is a subsolution (resp. supersolution) and
$U \subset \Omega$ is a closed round ball then $M_{U}(u)$ is again
a subsolution (resp. supersolution).

\item \label{Item.bounded} Let $\phi $  (resp. $u$) be a supersolution  (resp. a
subsolution) of problem $(P)$, then we have $u\leqs \phi$ on
$\Omega$. Moreover, for any closed round ball $U \subset \Omega$
we have $u\leqs M_U(u)\leqs M_U(\phi)\leqs \phi$.

\end{enumerate}
}
\end{remark}

\begin{definition}[{\bf{\em Barriers}}]\label{d1}{\em

We consider the Dirichlet problem $(P)$, see \defref{probP}. Let
$p \in \partial \Om\cup\partial_\infty\Om $ be a boundary point
where $g$ is continuous.

\begin{enumerate}
\item \label{D.Item.bar.1}
$\bullet$ Assume first that $p\in \partial \Omega$. Suppose that
for any $M>0$ and for any $k\in\Na$ there is an open neighborhood
$ \Nc_k$ of $p$ in $\hi n$ and a function $\omega_{k}^+$ $($resp.
$\omega_{k}^-)$  in $C^{2}(\Nc_k \cap \Omega)\cap
C^{0}(\overline{\Nc_k \cap \Omega)}$ such that
\begin{itemize}
\item[ i)]$\omega_{k}^+(x)_{\mid \partial \Omega \cap \overline {\Nc_k}}\geq g(x)$ and
$\omega_{k}^+(x)_{\mid \partial \Nc_k  \cap \Omega} \geq M$\\
$($resp. $\omega_{k}^-(x)_{\mid \partial \Omega \cap
\overline{\Nc_k}}\leq g(x)$ and $\omega_{k}^-(x)_{\mid
\partial \Nc_k  \cap \Omega} \leq -M)$.
\item[ii)] $\Mc (\omega_{k}^+) \leq 0$ $($resp.
$\Mc (\omega_{k}^-) \geq 0)$
 in $\Nc_k \cap \Omega$.

\item[ iii)] $\lim_{k \to +\infty}\omega_{k}^+(p)=g(p)$
 $($resp. $\lim_{k \to +\infty}\omega_{k}^-(p)=g(p))$.
\end{itemize}

\medskip

\noindent $\bullet$ If $p\in \partial_\infty \Om$, then we choose
for $\Nc_k$ an open set of $\hi n$ containing a half-space with
$p$ in its asymptotic boundary. We recall that a half-space is a
connected component of $\hi n\setminus \Pi$ for any geodesic
hyperplane $\Pi$. Then the functions $\omega_k^+$ and $\omega_k^-$
are in  $C^{2}(\Nc_k \cap \Omega)\cap C^{0}(\overline{\Nc_k \cap
\Omega})$and satisfy:

\begin{itemize}
\item[ i)]$\omega_{k}^+(x)_{\mid \partial \Omega \cap \overline {\Nc_k}}\geq g(x)$ and
$\omega_{k}^+(x)_{\mid \partial \Nc_k  \cap \Omega} \geq M$\\
$($resp. $\omega_{k}^-(x)_{\mid \partial \Omega \cap
\overline{\Nc_k}}\leq g(x)$ and $\omega_{k}^-(x)_{\mid
\partial \Nc_k  \cap \Omega} \leq -M)$.

\item[ ii)] For any $x \in \pain (\Omega \cap  \Nc_k)$ we
have $\liminf_{y\to x} \omega_k^+(y)\geq g(x)$ (for $y\in
\Nc_k\cap \Omega$)
 (resp. $\limsup_{y\to x} \omega_k^-(y)\geq g(x)$).

\item[iii)] $\Mc (\omega_{k}^+) \leq 0$ $($resp.
$\Mc (\omega_{k}^-) \geq 0)$
 in $\Nc_k \cap \Omega$.

\item[ iv)] $\lim_{k\to +\infty}\bigl(\liminf_{q\to
p}\omega_k^+(q)\bigr)=g(p)$ and \\
 $\lim_{k\to +\infty}\bigl(\limsup_{q\to
p}\omega_k^-(q)\bigr)=g(p)$.

\end{itemize}

\medskip

\item \label{D.Item.bar.2}
Suppose that  $p\in \partial \Omega$  and that  there exists a
supersolution $\phi$ $($resp. a subsolution $\eta)$ in
$C^2(\Om)\cap C^0(\overline \Om)$ such that $\phi(p)=g(p)$ (resp.
$\eta (p)=g(p)).$

\end{enumerate}
  In both cases (\ref{D.Item.bar.1}) or  (\ref{D.Item.bar.2})
we say that $p$ admits an {\em upper barrier} ($\omega_{k}^+, k\in
\Na$  or $\phi$) (resp. {\em lower} {\em barrier} $\omega_{k}^- ,
k\in \Na $ or $\eta$) for the problem ($P$). If $p$ admits an
upper and a lower barrier we say more shortly that $p$ admits a
{\em barrier}.}

\end{definition}

\begin{definition}[{\bf{\em $C^0$ convex domains}}]${}$

\begin{enumerate}
\item We say that  a $C^0$ domain $\Omega$ is  convex at $p\in\partial \Om$,
 if a neighborhood of $p$  in
$\ov \Omega$ lies in  one side of some geodesic hyperplane of $\hi
n$ passing through $p$.
\item We say that a $C^0$ domain  $\Omega$ is strictly convex at $p\in\partial \Om$
if a neighborhood $U_p\subset \ov \Omega$ of $p$  in $\ov \Omega$
lies in one side of some geodesic hyperplane $\Pi$ of $\hi n$
passing through $p$ and if $U_p \cap \Pi =\{p\}$.
\end{enumerate}

\end{definition}

We are then able to state the following result.

\begin{theorem}[{\bf {\em Perron process}}]\label{perron}

Let $\Omega \subset \hip^n$ be a domain and let \linebreak
 $g:
\partial\Om\cup\partial_\infty\Om \rightarrow \R$ be a  bounded
function. Let $\phi$ be a bounded supersolution of the Dirichlet
problem $(P)$, for example the constant function $\phi\equiv \sup
g$.

Set $\Sc_\phi = \{\vphi,\, \text{subsolution of }  \, (P),\, \,
\vphi \leq~ \phi \}$. We define for each $x \in \Omega $
$$
 u(x)=\sup_{\vphi \in \Sc_\phi}{\vphi(x)}.
$$
$($Observe that $\Sc_\phi \not=\emptyset$ since the constant
function $\varphi \equiv \inf g $ belongs to $\Sc_\phi$.$)$

 We have the following:
\begin{enumerate}
\item \label{Item.minequ} The function $u$ is $C^2$ on $\Omega$ and satisfies the
vertical minimal  equation $(\ref{minequ})$.

\item \label{Item.asym.bound} Let $p \in \partial_\infty\Om$ be an asymptotic
boundary point where $g$ is continuous. Then  $p$ admits a {\em
barrier} and therefore $u$ extends continuously at $p$ setting
$u(p)=g(p);$ that is, if $(q_m)$ is a sequence in $\hip^n$ such
that $q_m\goto p$, then $u(q_m)\goto g(p)$. In particular, if $g$
is continuous on $\partial_\infty\Om$ then the asymptotic boundary
of the graph of $u$ is  the restriction of the graph of $g$ to
$\partial_\infty\Om.$

\item \label{Item.fin.bound} Let  $p \in \partial\Om$ be a finite boundary
point where $g$ is continuous. Suppose
  that $p$ admits a
{\em barrier}. Then the solution $u$ extends continuously at $p$
setting $u(p)=g(p)$.

\item \label{bco} If $ \partial\Om$ is $C^0$ strictly convex at $p$
then $u$ extends continuously  at $p$ setting $u(p)=g(p).$
\end{enumerate}
\end{theorem}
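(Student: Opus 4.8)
The plan is to prove \thmref{perron} in four parts, treating the Perron construction classically and reducing the boundary-regularity parts to the existence of barriers, which in turn follows from the explicit translation hypersurfaces $M_d$ recalled in \secref{S.Md}.

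\medskip

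\textbf{Part (\ref{Item.minequ}): interior regularity.} First I would show that $u=\sup_{\vphi\in\Sc_\phi}\vphi$ solves \eqref{minequ} in $\Om$ by the standard Perron argument adapted to our non-linear setting, exactly as in the two-dimensional case \cite{asi},\cite{SE-T}. The key inputs are: (i) $\Sc_\phi$ is non-empty (the constant $\inf g$) and closed under taking $\sup$ of finitely many members and under the replacement operator $M_U$ (\remref{r1}); (ii) by \remref{r1}(\ref{Item.bounded}) every member of $\Sc_\phi$ is $\leqs\phi$, so $u$ is well-defined and bounded. Fix $x_0\in\Om$ and a small closed round ball $U\subset\Om$ centered at $x_0$. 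Choose a sequence $\vphi_j\in\Sc_\phi$ with $\vphi_j(x_0)\to u(x_0)$; replacing $\vphi_j$ by $M_U(\sup(\vphi_1,\dots,\vphi_j,\inf g))$ we may assume the $\vphi_j$ are increasing, solve \eqref{minequ} in $\inter U$, and are uniformly bounded. By the compactness principle of \remref{spr} (Spruck's interior gradient estimates plus elliptic theory) a subsequence converges in $C^2_{loc}(\inter U)$ to a solution $v$ of \eqref{minequ} with $v\leqs u$ on $U$ and $v(x_0)=u(x_0)$. The usual argument shows $v\equiv u$ on $\inter U$: if $u(x_1)>v(x_1)$ for some $x_1\in\inter U$, pick $\psi_j\in\Sc_\phi$ with $\psi_j(x_1)\to u(x_1)$, form $w_j=M_U(\sup(\vphi_j,\psi_j,\inf g))$, extract a $C^2_{loc}$ limit $w$ solving \eqref{minequ} with $v\leqs w\leqs u$, $w(x_0)=v(x_0)=u(x_0)$ and $w(x_1)>v(x_1)$; since $v,w$ solve the same minimal equation and the difference $w-v\geqs 0$ attains an interior minimum value $0$ at $x_0$, the strong maximum principle (applicable because the minimal operator is locally uniformly elliptic in the $C^1$ class, \remref{clm}) forces $w\equiv v$, a contradiction. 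Hence $u$ is $C^2$ and satisfies \eqref{minequ}; since $x_0$ was arbitrary, this holds on all of $\Om$.

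\medskip

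\textbf{Reduction of (\ref{Item.asym.bound}) and (\ref{Item.fin.bound}) to barriers.} Next I would show that whenever $p\in\PO$ is a point of continuity of $g$ admitting an upper and a lower barrier in the sense of \defref{d1}, then $u(q)\to g(p)$ as $q\to p$. The lower bound $\liminf_{q\to p}u(q)\geqs g(p)$ is immediate from the definition of $u$: using a lower barrier $\omega_k^-$ one builds, for each $k$ and $M$, a genuine subsolution of $(P)$ (glue $\omega_k^-$ on $\Nc_k\cap\Om$ with $\sup(\omega_k^-,\inf g)$, truncated appropriately, exactly as in \cite{asi}) lying below $\phi$, hence below $u$, whose value near $p$ tends to $g(p)$; for the case $p\in\partial_\infty\Om$ one uses condition iv) of \defref{d1}. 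For the upper bound $\limsup_{q\to p}u(q)\leqs g(p)$, the upper barrier $\omega_k^+$ (or the supersolution $\phi$ of type (\ref{D.Item.bar.2})) dominates every $\vphi\in\Sc_\phi$ near $p$ by \remref{r1}(\ref{Item.bounded}) / the comparison principle \remref{clm}, hence dominates $u$ there, and letting $k\to\infty$ and then $q\to p$ gives $\limsup u(q)\leqs g(p)$. Combining the two inequalities proves (\ref{Item.fin.bound}); part (\ref{Item.asym.bound}) is the same argument at an asymptotic point, and the last sentence about the asymptotic boundary of the graph follows by applying this to every $p\in\partial_\infty\Om$.

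\medskip

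\textbf{Part (\ref{bco}): strict convexity gives a barrier automatically.} Finally, to prove (\ref{bco}) it suffices, by the reduction above, to exhibit upper and lower barriers at a $C^0$ strictly convex point $p\in\partial\Om$. By definition there is a geodesic hyperplane $\Pi$ through $p$ with a neighborhood $U_p$ of $p$ in $\ov\Om$ lying in one closed side $\Pi^+$ and $U_p\cap\Pi=\{p\}$. Here is where the hypersurfaces $M_d$, $d<1$, enter: by the $M_d$-Property \eqref{mdp}, $\lambda(\rho)\to+\infty$ as $d\to 1$ for each fixed $\rho>0$, so the entire graphs $M_d$ (solutions of \eqref{minequ}) can be placed with $\Pi$ as their "axis hyperplane" and, after a vertical translation by $g(p)$ and a reflection, used as barriers: for $d$ close to $1$ the function $x\mapsto g(p)+\lambda_d(\dist(x,\Pi))$ is a supersolution near $p$ which equals $g(p)$ on $\Pi$ (hence at $p$) and exceeds any prescribed $M$ on $\partial\Nc_k\cap\Om$ once $\Nc_k$ is a thin enough slab around $\Pi$, while its negative/reflected counterpart is a subsolution; continuity of $g$ at $p$ lets one absorb the variation of $g$ on $\partial\Om\cap\ov{\Nc_k}$ into the slack between $\lambda_d$ and $g(p)$. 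This yields barriers of type (\ref{D.Item.bar.1}) at $p$, and (\ref{bco}) follows from (\ref{Item.fin.bound}).

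\medskip

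The main obstacle I anticipate is Part (\ref{Item.minequ}): making the Perron replacement argument rigorous requires that the compactness in \remref{spr} and the strong maximum principle apply to the limit solutions, which is fine because Spruck's a priori estimates are interior and depend only on height bounds and distance to the boundary, all uniform here since everything is trapped between $\inf g$ and $\phi\leqs\sup g$. The delicate bookkeeping is in gluing barriers to produce actual sub/supersolutions of $(P)$ that are admissible competitors (they must be continuous on $\Om\cup\partial\Om$ and satisfy conditions i)--iii) of \defref{probP} globally, not just near $p$); this is routine but must be done by the truncation-and-glue device of \cite{asi},\cite{SE-T}, which I would cite rather than reproduce.
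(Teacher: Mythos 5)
Your Part (\ref{Item.minequ}), your conditional reduction ``barrier at $p$ $\Rightarrow$ $u(q)\to g(p)$'', and your slab barrier at a strictly convex finite point follow essentially the paper's route (the paper defers (\ref{Item.minequ}) to \cite{asi} plus Spruck's compactness, and its proof of (\ref{bco}) is exactly your $M_d$, $d<1$, construction). But there is a genuine gap in item (\ref{Item.asym.bound}). That item does not carry the hypothesis ``suppose $p$ admits a barrier'': its content is precisely that \emph{every} $p\in\partial_\infty\Om$ at which $g$ is continuous automatically admits a barrier, with no convexity assumption on $\Om$ at infinity, and you never construct such barriers --- you dispose of (\ref{Item.asym.bound}) with ``the same argument at an asymptotic point'', which only proves the implication you already established for (\ref{Item.fin.bound}). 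The paper's proof of (\ref{Item.asym.bound}) is exactly this missing construction: choose a geodesic hyperplane $\Pi$ inside a small neighborhood $U$ of $p$, an equidistant hypersurface $\Pi_k$, and the component $\Nc_k$ of $\hip^n\setminus\Pi_k$ having $p$ in its asymptotic boundary; then take two vertical translates $M_1^{\pm}$ of the $d=1$ hypersurface $M_1$ (boundary value $\pm\infty$ on $\Pi$, constant asymptotic value $g(p)\pm 1/2k$, value $A>\sup_\Om\phi$ resp. $B<\inf g$ on $\Pi_k$), prove that $\om_k^-$ extended by the constant $B$ is a global subsolution lying in $\Sc_\phi$, and that every $\varphi\in\Sc_\phi$ satisfies $\varphi\leq \om_k^+$ on $\Nc_k\cap\Om$ (a separate argument, since $\Nc_k\cap\Om$ is unbounded), which yields the squeeze $\om_k^-\leq u\leq \om_k^+$ there. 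The entire graphs $M_d$, $d<1$, which you invoke as the source of all barriers, cannot do this job: the asymptotic barrier must have a \emph{prescribed constant} value at infinity near $p$ while exceeding $\sup_\Om\phi$ on the inner wall $\Pi_k$, and it is exactly the non-entire graph $M_1$ (infinite data on $\Pi$, zero asymptotic data) that supplies this; without it, item (\ref{Item.asym.bound}) is unproven.

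A smaller point in your Part (\ref{bco}): the upper barrier should be the translate of $M_d$ by $g(p)+1/k$, not by $g(p)$. With your normalization, condition i) of \defref{d1}, namely $\omega^+\geq g$ on $\partial\Om\cap\overline{\Nc_k}$, can fail at boundary points arbitrarily close to $p$, where $\lambda_d(\dist(\cdot,\Pi))$ is arbitrarily small while $g$ may exceed $g(p)$; ``absorbing the variation of $g$ into the slack of $\lambda_d$'' does not work at such points. The paper's arrangement is to lift the copy of $M_d$ by $g(p)+1/k$ and to use strict convexity plus continuity of $g$ to choose the slab $\Nc$ so thin that $g\leq g(p)+1/k$ on $\overline{\Nc}\cap\partial\Om$, with the $M_d$-Property \eqref{mdp} then providing the height $\geq M$ on the outer wall; condition iii) holds because $\omega_k^+(p)=g(p)+1/k\to g(p)$.
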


\medskip

\begin{proof}
$ $ The proof of (\ref{Item.minequ}) follows as in \cite[Theorem
3.4]{asi}. We will give now some details. To obtain the solution
$u$ we need a {\em compactness principle} and we also need that
for any $y\in \Omega$ there exists a round closed ball $B\subset
\Omega$ such that $y\in \inter (B)$ and such that the Dirichlet
problem ($P$) can be solved on $B$ for any continuous boundary
data on $\partial B$.

 The compactness principle was shown by Spruck, see \cite{Sp}. The
 resolution of the Dirichlet problem on $B$ may also be
 encountered in \cite{Sp}, nevertheless we give some details for an
 alternative proof. Working in the half space model of $\hi n$, $B$
 can be seen as an Euclidean ball  centered at $y$ of radius
 $R>0$. Assume first that $h$ is a $C^{2,\alpha}$ function on
 $\partial B$. Observe that the eigenvalues of the symmetric
 matrix of the coefficients of $u_{x_i x_j}$ in Equation
(\ref{minequ}) are $1$ and $(W_M u)^2=1 +x_n^2(u_{x_1}^2+\cdots +
u_{x_n}^2)$, the  last with multiplicity $n-1$. Therefore, if $R$
is small enough, then the equation (\ref{minequ}) satisfies the
structure conditions  (14.33) in  \cite[Chapter 14]{GT}. Thus
Corollary 14.5 in \cite{GT} shows that there exist a priori
boundary gradient estimates. Then the classical elliptic theory
provides a $C^{2,\alpha}$ solution of ($P$), see for example
\cite[Chapter 11]{GT}. Finally, for continuous boundary data $h$
on $\partial B$,  we use an approximation argument.

\vskip2mm

 Let us proceed the proof of the assertion (\ref{Item.asym.bound}). Let
$p\in \pain \Omega$, we want to show that the minimal hypersurface
$M_1$ provides an upper and a lower  barrier at $p$. Let $k\in
\Na^\ast$, since $g$ is continuous at $p$, there exists a
neighborhood $U$ of $p$ in $ \hip^n \cup \pain \hi n$ such that
for any $q\in \bigl(\partial \Omega \cup \pain \Omega\bigr) \cap
U$ we have $g(p)-1/2k<g(q)<g(p)+1/2k$.

Let $\Pi$ be a geodesic hyperplane such that $\Pi \subset U$ and
such that the connected component of $\hip ^n\setminus \Pi$ lying
entirely in $U$ contains $p$ in its asymptotic boundary. We choose
an equidistant hypersurface $\Pi_k$ of $\Pi$ in the same connected
component of $\hip^n \setminus \Pi$. We denote by $\Nc_k$ the
connected component of $\hip^n\setminus \Pi_k$ containing $p$ in
its asymptotic boundary.

We can choose $\Pi_k$ such that there exist two copies $M_1^+$ and
$M_1^-$ of $M_1$ satisfying:
\begin{itemize}
\item $M_1^+$ takes the
asymptotic boundary value data $g(p)+1/2k$ on $\pain \Nc_k$, the
value data $+\infty$ on $\Pi$ and a finite value data
$A>\max\bigl( g(p)+1/2k, \sup_{\Omega} \phi\bigr)$ on $\Pi_k$.

\item $M_1^-$ takes the
asymptotic boundary value data $g(p)-1/2k$ on $\pain \Nc_k$, the
value data $-\infty$ on $\Pi$ and a finite value data $B<\inf g$
on $\Pi_k$.
\end{itemize}

Let us denote by $\om_k^+$ (resp. $\om_k^-$) the function on
$\Nc_k\cap \Omega$ whose graph is the copy $M_1^+$ (resp. $M_1^-$)
of $M_1$. We extend $\om_k^-$ on $\overline \Omega$ setting
$\om_k^-(q)=B$ for any $q\in \overline\Omega\setminus \Nc_k$,
keeping the same notation.

\medskip

\noindent {\bf Claim 1.} $\om_k^-\in \Sc_\phi$, that is $\om_k^-$
is a subsolution such that $\om_k^- \leq \phi$.

\noindent {\bf Claim 2.} For any subsolution $\varphi \in
\Sc_\phi$ we have $\varphi_{\mid \Nc_k\cap \Omega} \leq \om_k^+$.

\medskip

{ We assume momentarily that the two claims hold. We then have for
any $q\in \Nc_k \cap \Omega$: $\om_k^-(q)\leq u(q)$ (since
$\om_k^-\in \Sc_\phi$ and by the very definition of $u$) and $
\varphi(q)\leq \om_k^+ (q)$ for any subsolution $\varphi \in
\Sc_\phi$.} We deduce that
\begin{equation*}
\om_k^-(q)\leq u(q) \leq \om_k^+ (q)
\end{equation*}
for any $q\in  \Nc_k \cap \Omega$ and for any $k\in \Na^\ast$. The
rest of the argument is straightforward but we will provide the
details for the readers convenience.

 We thus have for any
$q\in \Nc_k \cap \Omega$:
\begin{equation*}
\om_k^-(q) -\bigl(g(p)-\frac{1}{2k}\bigr)-\frac{1}{2k}  \leq
u(q)-g(p) \leq \om_k^+ (q) -
\bigl(g(p)+\frac{1}{2k}\bigl)+\frac{1}{2k}.
\end{equation*}
Let $(q_m)$ be a sequence in $\Omega$ such that $q_m \to p$. By
construction,
for $m$ big enough we have $q_m \in \Nc_k\cap \Omega$ and
\begin{equation*}
\vert\om_k^+ (q_m) - \bigl(g(p)+\frac{1}{2k}\bigl)\vert \leq
\frac{1}{2k}, \quad \vert \om_k^- (q_m) -
\bigl(g(p)-\frac{1}{2k}\bigl)\vert \leq \frac{1}{2k}.
\end{equation*}
We then have $\vert u(q_m)-g(p)\vert \leq 1/k$ for $m$ big enough,
hence $u(q_m) \to g(p)$. We conclude therefore that $u$ extends
continuously at $p$ setting $u(p)=g(p)$.

\medskip

Let us prove Claim 1.
  By construction,
 $\om_k^-$ is continuous on $\overline \Omega$ and satisfies
  $\om_k^-\, _{\mid \partial \Omega}\leq g$ and
 $\limsup_{y\to p}\om_k^- (y) \leq g(p)$ ($y\in \overline \Omega$)
 for any $p\in \pain \Omega$. It is straightforward to show that for any closed
 round ball $U\subset \Omega$ we have $M_U (\om_k^-)\geq \om_k^-$,
 see (\ref{mu}) in Definition \ref{probP}.
  Hence $\om_k^-$ is a
subsolution of our Dirichlet problem ($P$). Observe that we have
 $\om_k^- \leq \phi$, see Remark \ref{r1}-(\ref{Item.bounded}),
 thus $\om_k^-\in \Sc_\phi$ as desired.

\smallskip

The proof of Claim 2 can be accomplished in the same way as the
proof of Claim 1, but we give another proof as follows. Let
$\varphi \in \Sc_\phi$. Assume by contradiction that $\sup_{\mid
\Nc_k\cap\Omega}(\varphi-\om_k^+)>0$. Since $\varphi$ and
$\om_k^+$ are bounded on $\Nc_k\cap \Omega$ we have $\sup_{\mid
\Nc_k \cap \Omega}(\varphi-\om_k^+)<+\infty$. Let $(q_m)$ be a
sequence in $\Nc_k\cap\Omega$ such that $(\varphi-\om_k^+)(q_m)\to
\sup_{\mid \Nc_k\cap\Omega}(\varphi-\om_k^+)$. Let $q\in \overline
{\Nc_k\cap\Omega} \cup \pain (\Nc_k\cap\Omega) $ be any limit
point of this sequence. Since
$$
\varphi\leq \phi <A=\om_k^+
$$
 on $\Pi_k$ and
$$
\varphi \leq g <g(p)+1/2k \leq \om_k^+
$$
on $\partial \Omega \cap \Nc_k$,
we must have
$$
q\in \Omega \cap \Nc_k \quad \text{or} \quad q\in \pain \Nc_k.
$$
The first possibility is discarded by the maximum principle. The
second possibility is also discarded since $\om_k^+\geq g(p)+1/2k$
on $\Nc_k$ and $\varphi (q_m)<g(p)+1/2k$ if $q_m \in \Nc_k \cap
\Omega $ is close enough of $\partial \Omega \cup \pain \Omega$.

\medskip

We conclude that $\om_k^+$ (resp. $\om_k^-$) is  an upper (resp. a
lower) barrier at any asymptotic point of $\Omega$ in the sense of
Definition \ref{d1}-(\ref{D.Item.bar.1}).

\medskip

We remark that the proof of the assertion (\ref{Item.fin.bound})
is analogous to the proof of the assertion
(\ref{Item.asym.bound}), see also \cite[Theorem 3.4]{asi}.

\medskip

Finally, the proof of the assertion (\ref{bco}) is a consequence
of the following.

\smallskip

\noindent {\bf Claim.} The family $M_d,\ d\in (0,1)$, provides a
barrier at any boundary point where $\Omega$ is strictly convex
and $g$ is  continuous.
\smallskip

 We proceed
 the proof of the claim as
follows. We choose the ball model for $\hip^ n$  and we may assume
that $p=0$. As $p$ is a strictly convex point, there is a geodesic
hyperplane $\Pi\subset\hip^ n$ such that,   locally, we have:

\smallskip

$\Pi\cap
\partial \Omega=\{0\}$ and, locally, $\Omega$ lies in one side, say $\Pi^ +$, of $\Pi.$

\smallskip

\noindent Let $M>0$ and $k\in \Na ^\ast$. We now construct a upper
barrier at $0$. Let $E(\rho)$ be the equidistant hypersurface to
$\Pi$ at distance $\rho$ lying in $\Pi^ +$. Let $E^+(\rho)$ be the
connected component of $\hip^ n\setminus E(\rho)$ that contains
$0.$  We call $\Nc$ the connected component of $E^+(\rho)\cap
\Omega$ such that  $0\in \overl{\Nc}.$ Consider the hypersurfaces
$M_d,\ d<1$, given by  equation (\ref{trl}). We choose $\rho>0$
such that $g(q) \leq g(0) +1/k$ on $\overl{\Nc}\cap \partial
\Omega$.

Using the $M_d$-Property (\ref{mdp}), we may choose $d$ near $1$,
$0<d<1$, such that $\lambda (\rho)>M-(g(0)-1/k)$. We set $w_k^+$
to be the function on $\overl{\Nc}$ whose the graph is (a piece
of) the vertical translated copy of $M_d$ by $g(0) +1/k$.

 Clearly,
the functions $w_k^+$ are continuous up to the boundary of $\Nc$
and give a upper barrier at $p$ in the sense of Definition
\ref{d1}-(\ref{D.Item.bar.1}). In the same way we can construct a
lower barrier at $p$. This completes the proof of the theorem.
\end{proof}

\section{Scherk type minimal hypersurfaces in $\hi n \times \R$}\label{S.Scherk}

\begin{definition}[\bf {\em Special rotational domain}]
\label{special domain} {\em  Let $\gamma, L \subset \hi n$ be two
complete geodesic lines with $L$ orthogonal to $\gamma$ at some
point $B\in \gamma \cap L$. { Using the half-space model for $\hi
n$, we can assume that $\gamma$ is the vertical geodesic such that
$ \pain \gamma = \{0,\infty\}$. We call $P\subset \hi n$ the
geodesic two-plane containing $L$ and $\gamma$. We choose $A_0\in
(0,B)\subset\gamma$ and $A_1 \in L\setminus \gamma$ and we denote
by $\alpha\subset P$ the euclidean segment joining $A_0$ and
$A_1$. Therefore the hypersurface $\Sigma$ generated by rotating
$\alpha$ with respect to $\gamma$ has the following properties.
\begin{enumerate}
\item $\inter (\Sigma)$ is smooth except   at point $A_0$.
\item $\Sigma$ is strictly convex in hyperbolic meaning and convex in euclidean
meaning.
\item \label{transv} $\inter (\Sigma)\setminus \{A_0\}$ is
transversal to the Killing field generated by the translations
along $\gamma$.
\end{enumerate}
Consequently $\Sigma$ lies in the mean convex side of the  domain
of $\hi n$ whose boundary is the hyperbolic cylinder with axis
$\gamma$ and passing through $A_1$. Let us call $\Pi\subset \hi n$
the geodesic hyperplane  orthogonal to $\gamma$ and passing
through $B$. Observe that the boundary of $\Sigma$ is a $n-2$
dimensional geodesic sphere of $\Pi$ centered at $B$.}

 We denote by
$U_\Sigma\subset \Pi$ the open geodesic ball centered at $B$ whose
boundary is the boundary of $\Sigma$. We call $\Dc_\Sigma\subset
\hi n$ the closed domain whose boundary is $U_\Sigma \cup \Sigma$.
Observe that $\partial \Dc_\Sigma$ is strictly convex at any point
of $\Sigma$ and convex at any point of $U_\Sigma$. Such a domain
will be called a {\em special rotational domain}. }
\end{definition}

\begin{proposition} \label{special t}  Let $\Dc_\Sigma\subset \hi n$ be a
special rotational domain.
 For any number $t\in\R$, there is a
unique solution $v_t$ of the vertical minimal equation in $\inter
(\Dc_\Sigma)$ which extends continuously to $\inter (\Sigma) \cup
U_\Sigma$, taking prescribed zero boundary value data on the
interior of $\Sigma$ and prescribed boundary value data $t$ on
$U_\Sigma$.

More precisely, for any $t\in \R$, the following Dirichlet problem
$(P_t)$ admits a unique solution $v_t$.
\begin{equation*}
({P}_t )\begin{cases} \mathcal M (u)= 0 \  \mathrm{ in }\ \inter (\Dc_\Sigma), \\
                             u = 0 \  \mathrm{ on }\ \inter (\Sigma), \\
                             u= t \  \mathrm{ on }\ U_\Sigma,\\
                             u\in C^2\left(\inter (\Dc_\Sigma)\right) \cap
                             C^0\left(\Dc_\Sigma \setminus  \partial
                             \Sigma\right).
            \end{cases}
\end{equation*}
Furthermore, the solutions $v_t$ are strictly increasing with
respect to $t$ { and satisfy $0 < v_t <t$ on $ \inter
(\Dc_\Sigma)$}.
\end{proposition}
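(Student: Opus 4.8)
The plan is to solve $(P_t)$ by the Perron method of Theorem~\ref{perron}, so the real work is to exhibit barriers at every boundary point and to pin down the two-sided bounds $0<v_t<t$. First I would observe that $\Dc_\Sigma$ is bounded, so the constant $\phi\equiv\max(t,0)$ is a supersolution and the constant $\eta\equiv\min(t,0)$ is a subsolution for the boundary data $g$ defined by $g=0$ on $\inter(\Sigma)$ and $g=t$ on $U_\Sigma$; hence the Perron family $\Sc_\phi$ is nonempty and bounded, and Theorem~\ref{perron}-(\ref{Item.minequ}) already yields a function $v_t\in C^2(\inter(\Dc_\Sigma))$ solving the minimal equation \eqref{minequ}. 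It remains to check that $v_t$ attains the prescribed data continuously on $\inter(\Sigma)\cup U_\Sigma$ (not on $\partial\Sigma$, which is excluded).

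Next I would supply the barriers. At a point $p\in\inter(\Sigma)$: by Definition~\ref{special domain}, $\partial\Dc_\Sigma$ is \emph{strictly} convex at every point of $\Sigma$, so Theorem~\ref{perron}-(\ref{bco}) applies verbatim and gives continuity of $v_t$ at $p$ with $v_t(p)=0$. At a point $p\in U_\Sigma$ the domain is only convex (the supporting geodesic hyperplane is $\Pi$ itself, with $U_\Sigma\subset\Pi$), so (\ref{bco}) does not apply directly; here I would instead use the reflection idea together with the rotational hypersurfaces $M_d$, $d<1$. Concretely, since $U_\Sigma$ is an open piece of the geodesic hyperplane $\Pi$ and $\Dc_\Sigma$ lies on one side of $\Pi$, I can reflect: by Lemma~\ref{ref} a solution vanishing on $U_\Sigma$ extends across it, so points of $U_\Sigma$ behave like interior points provided the data there were $0$. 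For general $t$ one subtracts the constant $t$ (the minimal equation is translation-invariant in the $\R$-factor), reducing to zero data on $U_\Sigma$ and data $-t$ on $\Sigma$; then a vertically translated copy of $M_d$, $d$ close to $1$, localized near $p$ in a neighborhood $\Nc_k\cap\Dc_\Sigma$ squeezed between an equidistant hypersurface of $\Pi$ and $\Pi$, serves as an upper and a lower barrier exactly as in the Claim at the end of the proof of Theorem~\ref{perron}. This gives $v_t(p)=t$ continuously, so $v_t\in C^0(\Dc_\Sigma\setminus\partial\Sigma)$ and solves $(P_t)$. Uniqueness is immediate from the classical maximum principle, Remark~\ref{clm}, applied on $\inter(\Dc_\Sigma)$: two solutions agree on $\inter(\Sigma)\cup U_\Sigma=\partial\Dc_\Sigma\setminus\partial\Sigma$, and since $\partial\Sigma$ has zero capacity one gets equality; alternatively one invokes the maximum principle after a harmless smoothing/exhaustion argument near $\partial\Sigma$.

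For the quantitative bounds, the sub/supersolutions $\eta\equiv\min(t,0)$ and $\phi\equiv\max(t,0)$ already give $\min(t,0)\le v_t\le\max(t,0)$ by Remark~\ref{r1}-(\ref{Item.bounded}); to upgrade to \emph{strict} inequality $0<v_t<t$ (for $t>0$; the case $t<0$ is symmetric and $t=0$ gives $v_t\equiv0$) I would argue by the strong maximum principle: $v_t-0$ and $t-v_t$ are both nonnegative solutions of a uniformly elliptic linear equation (linearize \eqref{minequ} along $v_t$), they are not identically zero on any open subset since the boundary data are not constant, hence they are strictly positive in the connected open set $\inter(\Dc_\Sigma)$. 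Monotonicity $t_1<t_2\Rightarrow v_{t_1}<v_{t_2}$ follows the same way: $v_{t_1}\le v_{t_2}$ on $\partial\Dc_\Sigma\setminus\partial\Sigma$ by comparing data ($0=0$ on $\Sigma$, $t_1<t_2$ on $U_\Sigma$), so $v_{t_1}\le v_{t_2}$ in the interior by Remark~\ref{clm}, and strictness again comes from the strong maximum principle applied to the difference, which is a nonnegative solution of a linear elliptic equation and cannot vanish at an interior point unless it vanishes identically.

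\textbf{Main obstacle.} The delicate point is the boundary behavior along $U_\Sigma$, where $\partial\Dc_\Sigma$ is merely convex rather than strictly convex, so Theorem~\ref{perron}-(\ref{bco}) is unavailable; the resolution is the reflection argument of Lemma~\ref{ref} (after normalizing the data on $U_\Sigma$ to zero), combined with the $M_d$-barriers, and one must check that the localized translated copies of $M_d$ genuinely fit inside a neighborhood $\Nc_k\cap\Dc_\Sigma$ — this is where the hyperbolic convexity of $\Sigma$ relative to the cylinder with axis $\gamma$, noted in Definition~\ref{special domain}, is used. A secondary technical nuisance is that the solution is only claimed continuous on $\Dc_\Sigma\setminus\partial\Sigma$, so all maximum-principle comparisons must be run on exhaustions avoiding $\partial\Sigma$, or one must note that a bounded solution omitting a codimension-$\ge2$ set from its boundary still obeys the comparison principle.
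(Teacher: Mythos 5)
Your plan applies Theorem~\ref{perron} directly to the discontinuous data ($0$ on $\inter(\Sigma)$, $t$ on $U_\Sigma$), and the crucial step is then the \emph{lower} barrier (say for $t>0$) at a point $p\in U_\Sigma$, where the boundary is only convex. This is exactly where your argument does not close. The Claim at the end of the proof of Theorem~\ref{perron} uses strict convexity precisely to guarantee that the slab neighborhood $\Nc$ cut off by an equidistant hypersurface of the supporting hyperplane meets $\partial\Omega$ only where $g$ is close to $g(p)$. Here the supporting hyperplane is $\Pi\supset U_\Sigma$, and any such slab necessarily contains the collar of $\Sigma$ near $\partial\Sigma$, where $g=0$ while your vertically translated copy of $M_d$ takes values close to $t$ (since $\lambda_d(\rho)\to 0$ as $\rho\to 0$); thus condition i) of Definition~\ref{d1}, $\omega_k^-\leq g$ on $\partial\Omega\cap\overline{\Nc_k}$, fails. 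Shrinking $\Nc_k$ to a small ball around $p$ that avoids $\Sigma$ does not help either: then $\partial\Nc_k\cap\Omega$ has points arbitrarily close to $\Pi$, where any $M_d$-type candidate attaining $t$ at $p$ is near $t$, so $\omega_k^-\leq -M$ fails. Your appeal to Lemma~\ref{ref} is moreover circular: reflection across $\Pi$ requires a solution already known to be continuous up to $U_\Sigma$ with constant value there, which is what you are trying to prove. The paper itself notes that the Perron process cannot be applied directly, and circumvents this by an auxiliary problem: it replaces $g$ by \emph{continuous} data $g_k$ equal to $t$ on $U_\Sigma\cup\partial\Sigma$, equal to $0$ on $\Sigma$ outside the collar $V_k=\{\dist(\cdot,\Pi)\leq 1/k\}$, and chosen on $V_k$ to lie above a translated $M_{d_k}$ with $\lambda_k(0)=t$, $\lambda_k(1/k)\leq -1$; then $M_{d_k}$ really is a lower barrier and the constant $t$ an upper barrier in the sense of Definition~\ref{d1}-(\ref{D.Item.bar.2}), Perron gives solutions $w_k$ with $0\leq w_k\leq t$ attaining $t$ on $U_\Sigma$, each $w_k$ is reflected across $\Pi$ by Lemma~\ref{ref}, and the compactness principle applied to the extended functions produces in the limit $k\to\infty$ a solution equal to $t$ on $U_\Sigma$ (now interior points) and, via the strict-convexity barriers, equal to $0$ on $\inter(\Sigma)$. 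Some such device is needed; as written, your existence proof has a gap at its key point.

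The uniqueness and monotonicity parts also lean on an unproved comparison principle. Remark~\ref{clm} requires continuity on $\overline{\Omega}$, which fails along the codimension-two sphere $\partial\Sigma$, and "$\partial\Sigma$ has zero capacity" is not something one can simply note for the minimal graph equation -- it is the content of the argument. The paper proves uniqueness by a Jenkins--Serrin type flux argument adapted from \cite{HRS}: one excises a rotational tube $C_\varepsilon$ around $\partial\Sigma$ whose $(n-1)$-volume tends to $0$, applies the divergence theorem to a truncation of $u-v$ on $\Dc_\varepsilon$, and concludes $\nabla u=\nabla v$ on $\{u>v\}$. Likewise, monotonicity in $t$ is not obtained by comparing boundary values across $\partial\Sigma$ but by horizontally sliding the graph of $v_{t_1}$ along $\gamma$ (using the transversality of $\Sigma$ to the translations, Definition~\ref{special domain}, and the bound $0<v_{t_1}<t_1$), so that all maximum-principle comparisons take place away from the discontinuity set. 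Your strong-maximum-principle upgrade from weak to strict inequalities is fine once the weak inequalities are established, but the weak inequalities themselves need one of these discontinuity-avoiding arguments.
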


\begin{proof}

 Before beginning the proof of the existence part of the
statement, we would like to remark that, as the ambient space has
dimension $n$ (arbitrary), we cannot use classical Plateau type
arguments to obtain a regular minimal hypersurface in $\mathbb
H^n\times \mathbb R$ whose the boundary is $\bigl( \textbf{}\Sigma
\times \{0\}\bigr) \cup \bigl(U_\Sigma \times \{t\}\bigr)\cup
\bigl(\partial \Sigma \times [0,t]\bigr)$.

We are not able to apply directly {\em Perron process}
(\thmref{perron}) to solve this Dirichlet problem.  For this
reason, in order to prove the existence part of our statement, we
need to consider an auxiliary Dirichlet problem, as follows.

We can assume that $t>0$. For $k \in \Na^\ast$ we set
\begin{equation*}
V_k :=\{p\in \Sigma \mid \text{dist\,}(p, \Pi)\leqs \frac{1}{k}\},
\end{equation*}
where we recall that $\Pi\subset \hi n$ is the geodesic hyperplane
containing $U_\Sigma$ and where $dist$ means the distance in $\hi
n$.

We choose a translated copy $M_{d_k}$ of the hypersurface $M_d$,
see section \ref{S.Md}, with $d_k<1$, given by a function
$\lambda_k (\rho)$ satisfying $\lambda_k (0)=t$ and $\lambda_k
(1/k)\leq -1$. Since $\lambda_k$ is an odd function for $d_k\in
(0,1)$, the $M_d$-Property (\ref{mdp}) insures that such a
$M_{d_k}$ exists for $d_k<1$ close enough to $1$. Then  we choose
a continuous function $f_k:V_k \rightarrow [0,t]$ such that

\begin{enumerate}

\item $f_k=t$ on $ \partial \Sigma=V_k \cap \Pi$.

\item $f_k=0$ on $ \partial V_k \cap \inter (\Sigma)$.

\item The graph of $f_k$  stands above the hypersurface $M_{d_k}$, that is
$f_k \geq \lambda_k$ on $V_k$.
\end{enumerate}

 Now we define a function $g_k : \partial \Dc_\Sigma
\rightarrow [0,t]$ setting:
\begin{equation*}
g_k(p)=\begin{cases}        0& \text{ if}\, p\in \Sigma \setminus V_k, \\
                         f_k & \text{if}\, p\in V_k, \\
                            t& \text{ if }\, p\in U_\Sigma. \\
      \end{cases}
\end{equation*}
Note that $g_k$ is a continuous function on $\partial \Dc_\Sigma$.
Then we consider an auxiliary Dirichlet problem $(\widehat{P}_k)$
as follows:
\begin{equation*}
(\widehat{P}_k)\begin{cases} \mathcal M (u)= 0 \text{ in }\,\inter (\Dc_\Sigma), \\
                             u = g_k  \text{ on }\,  \partial \Dc_\Sigma, \\
                            u\in C^2\left(\inter (\Dc_\Sigma)\right)
                             \cap C^0\left(\Dc_\Sigma\right).
            \end{cases}
\end{equation*}

\medskip

Observe that the hypersurface $M_{d_k}$ provides a lower barrier
at any point of $U_\Sigma$ and that at such a point the constant
function $\omega^+ \equiv t$ is an upper barrier in the sense of
Definition \ref{d1}-(\ref{D.Item.bar.2}). Furthermore, $\partial
\Dc_\Sigma$ is $C^0$ strictly convex at any other point, that is
at any point of  $\Sigma$. Therefore the hypersurfaces $M_d$,
$d<1$, provide a barrier at these points, see the proof of
\thmref{perron}-(\ref{bco}). Thus, any point of $\partial
\Dc_\Sigma$ has a barrier. Applying {\em Perron Process}
(\thmref{perron}), considering the set of subsolutions to problem
$(\widehat{P}_k)$ below the constant supersolution identically
equal to $t$, we find a solution $w_k$ of the Dirichlet problem
$(\widehat{P}_k)$. Observe that the zero function is a subsolution
of $(\widehat{P}_k)$. Therefore we have $0\leqs w_k \leqs t$ for
any $k>0$.

Using the {\em reflection principle} with respect to $\Pi$
(\lemref{ref}), it follows that each point of $U_\Sigma$ can be
considered as an interior point of the domain of a function,
denoted again by $w_k$, satisfying the minimal equation, bounded
below by $0$ and bounded above by $2t$. Observe that this estimate
is independent of $k>0$.

Consequently, using the {\em compactness principle}, we can find a
subsequence that converges to a function $v_t \in C^2(\inter
(\Dc_\Sigma))\cap C^0(\inter (\Dc_\Sigma)\cup U_\Sigma)$
satisfying the minimal equation $\mathcal M (v_t)=0$ and such that
$v_t(p)=t$ at any $p\in U_\Sigma$. Since any point of $\inter
(\Sigma)$ has a barrier the function $v_t$ extends continuously
there, setting $v_t(p)=0$ at any $p\in \inter (\Sigma)$. We have
therefore proved the existence of a solution $v_t$ of the
Dirichlet problem $(P_t)$. { Observe that by construction we have
$0<v_t <t$ on  $\inter (\Dc_\Sigma)$.}

{  Let us prove now uniqueness of the solution of $(P_t)$. Let $u$
and $v$ be two solutions of the Dirichlet problem $(P_t)$. We will
adapt the proof of \cite[Theorem 2.2]{HRS} to our situation.}

We are going to use the notations of Definition \ref{special
domain}. Let us recall that $P$ is the geodesic two-plane
containing the geodesic lines $\gamma$ and $L$. Let $\varepsilon
>0$ and let us call $c_\varepsilon \subset P$ the intersection of
the circle or radius $\varepsilon$ centered at $A_1$ with the
compact subset of $P$ delimited by $\gamma, L$ and the euclidean
segment $\alpha$. We denote by $C_\varepsilon \subset \hi n $ the
compact hypersurface obtained by rotating $c_\varepsilon$ with
respect to $\gamma$. Let $V_\varepsilon$ be the $n-1$ volume of
$C_\varepsilon $. Observe that $V_\varepsilon \to 0$ when
$\varepsilon \to 0$. From now the arguments follow as in
\cite{HRS}, so we just sketch the proof.

  For $N>0$ large we define
\begin{equation*}
 \varphi =\begin{cases}
N-\varepsilon   & \text{if}\quad u-v \geq N \\
u-v-\varepsilon & \text{if}\quad \varepsilon<u-v <N \\
0 & \text{if}\quad u-v\leq\varepsilon
          \end{cases}
\end{equation*}
Let us call $\Dc_\varepsilon$ the connected component of
$\Dc_\Sigma \setminus C_\varepsilon$ containing $A_0$ (we have
$\Dc_\varepsilon \to \Dc_\Sigma$ when $\varepsilon \to 0$).
Observe that $\varphi \equiv 0$ along $\partial \Dc_\varepsilon
\setminus C_\varepsilon$. So that, applying the divergence theorem
and using the fact that $u$ and $v$ are solutions of the minimal
graph equation, we obtain
\begin{equation*}
 \int_{C_\varepsilon} \varphi \langle
\frac{\nabla u}{W_M u}-\frac{\nabla v}{W_M v}, \nu\rangle ds =
\int_{\Dc_\varepsilon} \langle \nabla \varphi, \frac{\nabla u}{W_M
u}-\frac{\nabla v}{W_M v}\rangle dV
\end{equation*}
where $\nu$ is the exterior normal to $\partial C_\varepsilon$. It
is shown in \cite[Lemma 2.1]{HRS} that $\langle \nabla u -\nabla
v, \frac{\nabla u}{W_M u}-\frac{\nabla v}{W_M v}\rangle \geq 0$
with equality at a point if, and only if, $\nabla u=\nabla v$.
Therefore
\begin{align*}
 0 \leq \int_{\Dc_\varepsilon}
\langle \nabla \varphi, \frac{\nabla u}{W_M u}-\frac{\nabla v}{W_M
v}\rangle dV
 = & \int_{C_\varepsilon} \varphi \langle
\frac{\nabla u}{W_M u}-\frac{\nabla v}{W_M v}, \nu\rangle ds \\
\leq & \ 2 N V_\varepsilon
\end{align*}
Letting $\varepsilon \to 0$, we get that $\nabla u \equiv \nabla
v$ in the set where $0 < u-v < N$. Letting $N \to +\infty$ we
obtain that $\nabla u \equiv \nabla v$ in the set $\{ u>v\}$.
Assume that $\inter \{ u>v\} \not=\emptyset$, then there exists a
constant $\lambda >0$ such that $u=v + \lambda$ on an open subset
of $\Dc_\Sigma$. By analyticity we deduce that $u=v + \lambda$
everywhere on $\Dc_\Sigma \setminus \partial \Sigma$, which is
absurd since $u=v$ on $\partial \Dc_\Sigma \setminus \partial
\Sigma$. Therefore we get that $\inter \{ u>v\} =\emptyset$, that
is $u\leq v$ on $\Dc_\Sigma \setminus \partial \Sigma$. The same
argument shows also that $v\leq u$ on $\Dc_\Sigma \setminus
\partial \Sigma$. Therefore $u=v$ and the proof of the uniqueness
of the solution of Dirichlet problem $(P_t)$ is completed.

\medskip

At last, let us prove that the family $\{v_t\}$ of the solutions
of Dirichlet problem $(\Pc_t)$ is strictly increasing on $t$. We
could adapt the same arguments of \cite[Theorem 2.2]{HRS} as
before, but we will give another proof.

Let $0< t_1 <t_2$ and let $v_1$ and $v_2$ be the solutions of the
Dirichlet problems $(P_{t_1})$ and $(P_{t_2})$ respectively. Let
$p$ be a fixed arbitrary point in the interior of $\Dc_\Sigma.$

For $\e$ small enough consider a $\e$-translated copy  of the
graph of $v_1$ along $\ga$ in the orientation $A_0\rightarrow B$.
This graph  is given by a function $v_1^\e$ over a translated copy
$\Dc_\Sigma(\e)$ of $\Dc_\Sigma$. Taking into account the
properties  on $\Sigma$ stated  in Definition \ref{special
domain}, we have $\Dc_\Sigma(\e) \cap \Sigma=\emptyset$. We may
assume that $\e$ is chosen  small so  that $p$ belongs to $\inter
(\Dc_\Sigma(\e))$. Since $0<v_1 <t_1$ on $\inter \Dc_\Sigma$, we
get that $v_1^\e$ is less than $v_2$ along the boundary of
$\Dc_\Sigma\cap \Dc_\Sigma(\e)$. Using maximum principle  we
deduce that $v_1^\e(p)<v_2(p)$, for $\e$ small enough, since
$v_1^\e<v_2$ along $\partial \bigl(\Dc_\Sigma\cap
\Dc_\Sigma(\e)\bigr)$. Thus letting $\e\to 0$ we have therefore
that $v_1(p)\leqs v_2(p)$, this accomplishes the proof.

\end{proof}


\begin{theorem}[\bf {\em Rotational Scherk hypersurface}] \label{special Scherk}
Let $\Dc_\Sigma\subset \hi n$ be a special rotational domain.
There is a unique solution $v$ of the vertical minimal equation in
$\inter (\Dc_\Sigma)$  which extends continuously to $\inter
(\Sigma)$, taking prescribed zero boundary value data and taking
boundary value $\infty$ for any approach to  $U_\Sigma$.

More precisely,  the following Dirichlet problem $(P)$ admits a
unique solution $v_\infty$.
\begin{equation*}
({P} )\begin{cases} \mathcal M (u)= 0 \  \mathrm{ in }\  \inter (\Dc_\Sigma), \\
                             u = 0 \  \mathrm{ on }\  \inter (\Sigma), \\
                             u= +\infty \ \mathrm{ on }\  U_\Sigma,\\
                             u\in C^2\left(\inter (\Dc_\Sigma)\right) \cap
                              C^0\left(\Dc_\Sigma \setminus
\ov U_ \Sigma \right).
            \end{cases}
\end{equation*}
We call the graph of $v$ in $\hi n \times \R$ a {\em rotational
Scherk hypersurface}.

\end{theorem}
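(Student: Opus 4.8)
The plan is to obtain $v_\infty$ as the monotone limit of the family $v_t$ from \propref{special t} as $t\to+\infty$, using the rotational Scherk hypersurface's own geometry together with a Scherk-type barrier to control the limit away from $\ov{U_\Sigma}$. First I would fix a point $p\in\inter(\Dc_\Sigma)$ and recall that, by \propref{special t}, the solutions $v_t$ are strictly increasing in $t$ and satisfy $0<v_t<t$. The key point is to show that the increasing limit $v_\infty(p):=\lim_{t\to+\infty}v_t(p)$ is finite for every $p\in\inter(\Dc_\Sigma)$; once that is done, the \emph{compactness principle} (\remref{spr}) applied to the bounded sequence $(v_{t_j})$ on compact subsets yields a function $v_\infty\in C^2(\inter(\Dc_\Sigma))$ solving $\Mc(v_\infty)=0$, and since each $v_t$ vanishes on $\inter(\Sigma)$ and $\inter(\Sigma)$ is $C^0$ strictly convex so that the barriers $M_d$, $d<1$, apply there (as in the proof of \thmref{perron}-(\ref{bco})), the limit extends continuously by $0$ to $\inter(\Sigma)$. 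Finally $v_t\le v_\infty$ for all $t$ forces $v_\infty\to+\infty$ along any approach to $U_\Sigma$, which gives the boundary behaviour on $U_\Sigma$.

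The main obstacle is precisely the finiteness of $v_\infty$ in the interior, i.e.\ a uniform-on-compacta upper bound for $v_t$ independent of $t$. The idea is to bound $v_t$ above, on $\Dc_\Sigma$ with a neighbourhood of $\ov{U_\Sigma}$ removed, by a \emph{fixed} supersolution that already takes value $+\infty$ on $U_\Sigma$. For this I would construct an auxiliary Scherk-type function on a larger region: enlarge $U_\Sigma$ slightly to a geodesic ball in a geodesic hyperplane, and use a translation hypersurface $M_1$ (section \ref{S.Md}) positioned so that its graph lies over the complement of a cylinder (or equidistant) region, takes value $+\infty$ on a geodesic hyperplane through the enlarged $U_\Sigma$, and finite bounded value on the rest. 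Because $\Dc_\Sigma$ lies in the mean-convex side described in \defref{special domain}, one can nest it inside such a barrier region so that $M_1$ (suitably translated up by a constant) dominates $v_t$ on $\partial\Dc_\Sigma\setminus(\text{nbhd of }\ov{U_\Sigma})$ — it is $+\infty$ near $U_\Sigma$, and one arranges its finite part to exceed the values of $v_t$ on $\inter(\Sigma)$ where $v_t=0$. The maximum principle (\remref{clm}, in its one-sided form $\Mc(\omega^+)\le 0$) then gives $v_t\le\omega^+$ on that region, uniformly in $t$, hence the needed interior bound on any compact $K\subset\inter(\Dc_\Sigma)$.

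For continuity of $v_\infty$ up to $\inter(\Sigma)$, at each $p\in\inter(\Sigma)$ the domain $\Dc_\Sigma$ is $C^0$ strictly convex, so by \thmref{perron}-(\ref{bco}) and its proof the family $M_d$, $d<1$, furnishes upper and lower barriers with prescribed value $0$ at $p$; since $0\le v_t\le\omega^+$ and the upper barrier $M_d$ from that argument can be taken $\ge$ the finite supersolution value and $\to 0$ at $p$, one concludes $v_\infty(q)\to 0$ as $q\to p$ inside $\inter(\Dc_\Sigma)$, so $v_\infty\in C^0(\Dc_\Sigma\setminus\ov U_\Sigma)$ with $v_\infty=0$ on $\inter(\Sigma)$. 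The boundary value $+\infty$ on $U_\Sigma$ is immediate from $v_\infty\ge v_t$ and the fact that $v_t\to t$ continuously at points of $U_\Sigma$: for any $q_m\to q\in U_\Sigma$ and any $t$, $\liminf_m v_\infty(q_m)\ge\liminf_m v_t(q_m)=t$, and letting $t\to+\infty$ gives $v_\infty(q_m)\to+\infty$.

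Uniqueness follows from the same divergence-theorem argument used for $(P_t)$ in \propref{special t} (adapting \cite[Theorem 2.2]{HRS}): given two solutions $u,v$ of $(P)$, apply the identity over $\Dc_\varepsilon=\Dc_\Sigma\setminus C_\varepsilon$ with the truncated test function $\varphi$ built from $u-v$; the boundary term over $C_\varepsilon$ is controlled by $2NV_\varepsilon\to 0$, and crucially the part of $\partial\Dc_\varepsilon$ near $U_\Sigma$ contributes nothing because there $u-v\to 0$ (both tend to $+\infty$ there and, after the reflection/interior argument used below for $(P_t)$, their difference is bounded and vanishes in the limit) — alternatively one truncates near $U_\Sigma$ too and lets that truncation recede. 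One then gets $\nabla u\equiv\nabla v$ on $\{u>v\}$, hence $u-v$ locally constant, hence constant on $\Dc_\Sigma\setminus\ov U_\Sigma$ by analyticity, which is absurd since $u=v=0$ on $\inter(\Sigma)$; so $u\le v$, and symmetrically $v\le u$. I would expect the handling of the boundary term near $U_\Sigma$ (where the data is infinite) to require a little care, e.g.\ first comparing with the translated copies $v_t$ to get that any solution of $(P)$ is squeezed between $v_t$ and $v_\infty$, but this is routine given the monotone-limit construction above.
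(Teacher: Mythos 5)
Your existence argument is essentially the paper's: bound $v_t$ uniformly in $t$ by a copy of $M_1$ whose domain is the half-space bounded by the hyperplane $\Pi\supset U_\Sigma$, with value $+\infty$ on $\Pi$ (the paper applies the maximum principle directly on $\inter(\Dc_\Sigma)$, with no need to enlarge $U_\Sigma$ or remove a neighbourhood of it; also note $M_1$ is a graph over a half-space, not over the exterior of an equidistant region -- that is $M_d$, $d>1$), then pass to the limit by the compactness principle, get $+\infty$ on $U_\Sigma$ from monotonicity, and get continuity with value $0$ on $\inter(\Sigma)$ from the $M_d$, $d<1$, barriers at strictly convex points. That part is fine.

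The uniqueness step, however, has a genuine gap. Your divergence-theorem argument needs the boundary contribution near $U_\Sigma$ to vanish, and you justify this by asserting that $u-v\to 0$ there ``because both tend to $+\infty$''; that implication is false in general, and the parenthetical appeal to a ``reflection/interior argument'' does not supply it (the reflection principle of \lemref{ref} concerns zero boundary data). To make the truncation-near-$U_\Sigma$ version work one would need a Jenkins--Serrin type flux estimate asserting that $\bigl\langle \frac{\nabla u}{W_M u},\nu\bigr\rangle$ and $\bigl\langle \frac{\nabla v}{W_M v},\nu\bigr\rangle$ both tend to $1$ near a face where the data is $+\infty$, so that their difference contributes nothing in the limit; this lemma is nontrivial and appears nowhere in your proposal or in the paper. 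Your fallback -- ``squeeze any solution between $v_t$ and $v_\infty$'' -- gives the lower bound $u\geq v_t$ easily, but the upper comparison $u\leq v_\infty$ is precisely the content of uniqueness and cannot be called routine: both functions are $+\infty$ on $U_\Sigma$, so no direct maximum-principle comparison applies. The paper avoids all of this by proving uniqueness exactly as it proved monotonicity in \propref{special t}: translate the graph of one solution by $\varepsilon$ along $\gamma$ in the direction $A_0\to B$; by the transversality property of $\Sigma$ the translated domain misses $\Sigma$, the translated function is \emph{bounded} near $U_\Sigma$ while the other solution tends to $+\infty$ there and is positive on the translated copy of $\Sigma$, so the maximum principle applies on the intersection domain, and letting $\varepsilon\to 0$ gives $u\leq v$ and, by symmetry, equality. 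You would need either to adopt this sliding argument or to prove the flux lemma to close your version.
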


\begin{proof}

First, we will prove the existence part of the Theorem. We
consider the family of functions $v_t,\, t>0$, given by
\propref{vt}. Recall that  $\Pi\subset \hi n$ is the totally
geodesic hyperplane containing $U_\Sigma$. We consider a suitable
copy of $M_1$ (see section \ref{S.Md}) as barrier as follows:
choose $M_1$ such that $M_1$ is a graph of a function $u_1$ whose
domain is the component of $\Hip^n\setminus \Pi$ that contains
$\Dc_\Sigma$, with $u_1$ taking boundary value data $+\infty$ on
$\Pi$ and  taking zero asymptotic boundary value data. By applying
maximum principle we have that $u_1(p)> v_t(p)$ for all $p\in
\Dc_\Sigma$ and all $t>0$.

 Using {\em compactness principle}  we
obtain that a subsequence of the family converges uniformly on any
compact subsets of $\inter (\Dc_\Sigma)$ to a solution $v_\infty$
of the minimal equation. Since the family is strictly increasing
$v_\infty$ takes the value $+\infty$ on $U_\Sigma$. That is, for
any sequence $(q_k)$ in $\inter (\Dc_\Sigma)$ converging to some
point of $U_\Sigma$ we have $v_\infty (q_k)\to +\infty$.

 Let $p\in \inter (\Sigma)$, since $\partial \Dc_\Sigma$ is
 $C^0$ strictly convex at $p$, the hypersurfaces $M_d$, $d<1$, provide a barrier at
 $p$, see the proof of \thmref{perron}-(\ref{bco}).
 Consequently $v_\infty$ extends continuously at $p$ setting
 $v_\infty  (p)=0$. Therefore $v_\infty$ is a solution of the
 Dirichlet problem ($P$).

The proof of uniqueness of $v_\infty$ proceeds in the same way as
the proof of  the monotonicity of the family $\{v_t\}$ in
Proposition \ref{special t}. This completes the proof of the
Theorem.
\end{proof}

\bigskip

\begin{theorem}[{\bf{\em Barrier at a $C^0$ convex
point}}]\label{Sch.B} Let $\Omega\subset \hi n$ be a domain and
let $p_0\in \partial \Omega$ be a boundary point where $\Omega$ is
$C^0$  convex. Then for any bounded data $g:\partial \Omega\cup
\pain \Omega \rightarrow \R$ continuous at $p_0$, the family of
rotational Scherk hypersurfaces  provides a barrier at $p_0$ for
the Dirichlet problem $(P)$. In particular, in {\em Theorem}
$\ref{perron}$-$(\ref{bco})$ the assumption {\em $C^0$ strictly
convex} can be replaced  by {\em $C^0$ convex}.
\end{theorem}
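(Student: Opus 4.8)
The plan is to show that a rotational Scherk hypersurface, after a suitable hyperbolic isometry, can be placed so as to dominate (respectively be dominated by) the prescribed boundary data near $p_0$, and that it can be pushed arbitrarily high (low) over the part of $\partial\Nc_k$ interior to $\Omega$ while its value at $p_0$ tends to $g(p_0)$. Concretely, since $\Omega$ is $C^0$ convex at $p_0$, there is a geodesic hyperplane $\Pi_0$ through $p_0$ with a neighborhood of $p_0$ in $\ov\Omega$ lying on one side of $\Pi_0$. The idea is to choose a special rotational domain $\Dc_\Sigma$ whose convex face $U_\Sigma$ is a small geodesic ball in $\Pi_0$ centered at $p_0$, and whose strictly convex part $\Sigma$ bulges away from $\Omega$; then $\Omega$ (locally near $p_0$) is contained in $\Dc_\Sigma$, with $\partial\Omega$ near $p_0$ meeting $\ov{U_\Sigma}$ only at (or near) $p_0$.

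First I would set up the upper barrier. Fix $M>0$ and $k\in\Na^\ast$. By continuity of $g$ at $p_0$ choose a neighborhood so that $g(q)\le g(p_0)+1/k$ on $\partial\Omega\cup\pain\Omega$ near $p_0$. Using Theorem~\ref{special Scherk}, take the rotational Scherk solution $v_\infty$ on the corresponding $\Dc_\Sigma$, which vanishes on $\inter(\Sigma)$ and blows up to $+\infty$ along $U_\Sigma$; shrink the radius of $U_\Sigma$ (equivalently move $\Pi_0$-ball) so that the finite value of $v_\infty$ on the portion of $\partial\Nc_k$ lying in $\Omega$ exceeds $M$, where $\Nc_k$ is the neighborhood of $p_0$ cut out between $\Sigma$ and a level hypersurface of $v_\infty$ close to $U_\Sigma$ — this is possible precisely because $v_\infty\to+\infty$ on $U_\Sigma$. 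Then set $\omega_k^+ := v_\infty + g(p_0) + 1/k$ restricted to $\Nc_k\cap\Omega$. Since $v_\infty$ satisfies the minimal equation, so does $\omega_k^+$, hence $\Mc(\omega_k^+)=0\le 0$; on $\partial\Omega\cap\ov{\Nc_k}$ we have $\omega_k^+\ge g(p_0)+1/k\ge g$; on $\partial\Nc_k\cap\Omega$ we have $\omega_k^+\ge M$; and $\omega_k^+(p_0)=0+g(p_0)+1/k\to g(p_0)$ as $k\to\infty$ since $p_0\in\inter(\Sigma)$ where $v_\infty$ extends continuously by zero. If $p_0\in\pain\Omega$ the asymptotic conditions (ii) and (iv) of Definition~\ref{d1}-(\ref{D.Item.bar.1}) are checked the same way using that $v_\infty\ge 0$. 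The lower barrier $\omega_k^-$ is obtained symmetrically by reflecting the construction (replacing $g(p_0)+1/k$ by $g(p_0)-1/k$ and taking $-v_\infty$ translated down, or equally by a rotational Scherk hypersurface with $-\infty$ data). Thus $p_0$ admits a barrier in the sense of Definition~\ref{d1}-(\ref{D.Item.bar.1}), and by Theorem~\ref{perron}-(\ref{Item.fin.bound}) (or (\ref{Item.asym.bound}) if $p_0\in\pain\Omega$) the Perron solution $u$ extends continuously at $p_0$ with $u(p_0)=g(p_0)$. The last sentence of the statement — that in Theorem~\ref{perron}-(\ref{bco}) ``$C^0$ strictly convex'' may be replaced by ``$C^0$ convex'' — is then immediate, since we have just exhibited a barrier at every merely convex boundary point.

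The main obstacle I anticipate is the geometric placement: verifying that the strictly convex face $\Sigma$ of the special rotational domain can genuinely be arranged to contain $\ov\Omega$ locally near $p_0$ on its mean-convex side, with $\partial\Omega$ touching $\ov{U_\Sigma}$ only near $p_0$, using only $C^0$ convexity (one supporting geodesic hyperplane) rather than strict convexity. The point is that $U_\Sigma$ is a genuine $(n-1)$-ball in $\Pi_0$, not a single point, so a neighborhood of $p_0$ in $\partial\Omega$ that lies on one side of $\Pi_0$ automatically stays outside $\Dc_\Sigma$ except where it is forced near $p_0$; one must check that the barrier inequalities on $\partial\Omega\cap\ov{\Nc_k}$ only require $\omega_k^+\ge g$ there, which holds because $v_\infty\ge 0$ on all of $\inter(\Dc_\Sigma)\cup\inter(\Sigma)$ and $g\le g(p_0)+1/k$ on that neighborhood. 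So strict convexity of $\partial\Omega$ is not needed — only strict convexity of $\partial\Dc_\Sigma$ at the points of $\Sigma$, which is built into Definition~\ref{special domain}. Once this placement is pinned down, the remaining verifications are the routine barrier bookkeeping already carried out in the proof of Theorem~\ref{perron}-(\ref{Item.asym.bound}) and (\ref{bco}).
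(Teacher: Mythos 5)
Your construction is internally inconsistent at the decisive point, and the only consistent reading of it fails precisely in the case the theorem is about. You place the flat face $U_\Sigma$ of the special rotational domain in the supporting hyperplane $\Pi_0$ \emph{centered at $p_0$} and use the Scherk solution $v_\infty$ with data $+\infty$ on $U_\Sigma$ and $0$ on $\inter(\Sigma)$; but then $p_0$ lies in $\ov{U_\Sigma}$, where $v_\infty$ blows up, so $\omega_k^+=v_\infty+g(p_0)+1/k$ cannot satisfy condition iii) of Definition~\ref{d1}-(\ref{D.Item.bar.1}) (its value at $p_0$ is not finite, let alone tending to $g(p_0)$); your claim that $\omega_k^+(p_0)=g(p_0)+1/k$ ``since $p_0\in\inter(\Sigma)$'' contradicts your own placement. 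Moreover the two requirements of an upper barrier pull in opposite directions under your placement: to get $\omega_k^+\geq M$ on $\partial\Nc_k\cap\Omega$ you must bound $\Nc_k$ by a high level set of $v_\infty$, which pushes $\ov{\Nc_k}$ away from $U_\Sigma$, hence away from $p_0$, while condition iii) forces $p_0\in\ov{\Nc_k}$ with small barrier values there. If instead you try the consistent variant in which $p_0\in\inter(\Sigma)$ (the zero face) and $\Omega$ is locally contained in $\Dc_\Sigma$, you run into the obstacle you yourself flagged: at a merely $C^0$ convex point (e.g.\ when $\partial\Omega$ contains an open piece of the supporting geodesic hyperplane through $p_0$) the strictly convex surface $\Sigma$, touching $\ov\Omega$ at $p_0$, pulls away from $\Pi_0$ quadratically, so no neighborhood of $p_0$ in $\Omega$ fits inside $\Dc_\Sigma$. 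Containment arguments of this kind can only work at strictly convex points, which is already covered by Theorem~\ref{perron}-(\ref{bco}).

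The paper's proof avoids any containment of $\Omega$ in the rotational domain. It takes the Scherk graph $\om$ with boundary data $-\infty$ on $U_\Sigma$ and a large constant $M'>\max\{M,g(p_0)+1/k\}$ on $\inter(\Sigma)$, places the axis $\ga$ orthogonal to the supporting hyperplane $\Pi_{p_0}$ with $U_\Sigma$ disjoint from $\ov\Om$ and $A_0$ on the side of $\Pi_{p_0}$ containing $\Om$ near $p_0$, and translates along $\ga$ so that $\om(p_0)=g(p_0)+1/k$. The essential work, entirely absent from your proposal, is then Claims 1 and 2: $\om$ is monotone along the axis $[A_0,B]$ and along geodesics orthogonal to it, proved by Alexandrov-type reflection through geodesic hyperplanes orthogonal to those segments (this is where the specific cone-like geometry of Definition~\ref{special domain} is used). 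These monotonicity properties are exactly what guarantee $\om\geq g(p_0)+1/k\geq g$ on $\partial\Om\cap\Nc_k$ using only the single supporting hyperplane provided by $C^0$ convexity, and $\om=M'\geq M$ on $\partial\Nc_k\cap\Om$. Without an analogue of these claims (or some substitute comparison), your barrier inequalities on $\partial\Om\cap\Nc_k$ and at $p_0$ cannot both be achieved, so the proposal as written has a genuine gap.
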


\begin{proof}

We use the same notations as in the definition of a special
rotational domain, Definition \ref{special domain}.

 We will prove that the rotational Scherk hypersurfaces with
$-\infty$ boundary data on the boundary part $U_\Sigma$  provide
an upper barrier at  $p_0$. For the lower barrier the construction
is similar.

Let $\Dc_\Sigma$ be a special rotational domain. Let $\om$ be the
height function of the rotational Scherk hypersurface $S$ taking
$-\infty$ boundary data on $U_\Sigma$ and $0$ boundary data on the
interior of $\Sigma$, given by Theorem \ref{special Scherk}.

\medskip

\noindent {\bf Claim 1.} $\om$ is decreasing along the oriented
geodesic segment  $[A_0, B]\subset \gamma$ (going from $A_0$ to
$B$).

\smallskip

\noindent {\bf Claim 2.} Let $D$ be any point on the open geodesic
segment $(A_0,B)$, and let $\beta\subset \Dc_\Sigma$ be a geodesic
segment issuing from $D$, ending at some point $C\in \inter
(\Sigma)$ and orthogonal to $[A_0, B]$ at $D$.

Then $\om$ is increasing along $\beta =[D,C]$, oriented from $D$
to $C$.

\bigskip

We first prove the theorem assuming that the two claims  hold.

\medskip

Let $D\in (A_0,B)$ and let $\Pi_D\subset \hip^n$ be the geodesic
hyperplane through $D$ orthogonal to the geodesic segment $[A_0,
B]$. Let $\Dc_\Sigma^+$ be the connected component of $\Dc_\Sigma
\setminus \Pi_D$ containing the point $A_0$. Let $q$ be any point
belonging to the closure of $\Dc_\Sigma^+$. The claims ensure that
$\om (q)\geq \om (D)$.

\medskip

Let $p_0\in\partial \Om$ be a $C^0$  convex point and let $g$ be a
bounded data continuous at $p_0$. Let $M>0$ be any positive real
number. It suffices  to show that for any $k\in\Na^\ast$ there is
an open neighborhood $ \Nc_k$ of $p_0$ in $\hip^n$   and a
function $\omega_{k}^+$   in $C^{2}(\Nc_k \cap \Omega)\cap
C^{0}(\overline{\Nc_k \cap \Omega)}$ such that
\begin{itemize}
\item[ i)]$\omega_{k}^+(x)_{\mid \partial\Om \cap \Nc_k}\geq g(x)$ and
$\omega_{k}^+(x)_{\mid \partial \Nc_k  \cap \Omega} \geq M$,
\item[ii)] $\mathcal{M} (\omega_{k}^+) = 0$
 in $\Nc_k \cap \Omega$,
\item[ iii)] $\omega_{k}^+(p_0)=g(p_0) +1/k$.
\end{itemize}

 By continuity there exists $\ep>0$ such that for any $p\in\partial
\Om$ with $\dist(p, p_0) <\ep$ we have $g(p) <g(p_0) + 1/k.$

By assumption there exist a geodesic hyperplane $\Pi_{p_0}$
through $p_0$ and an open neighborhood $W\subset \Pi_{p_0}$ of
$p_0$  such that $W\cap\Om=\emptyset$.
 We set
$\Omega_\varepsilon=\{p\in \Omega \mid \dist
(p_0,p)<\varepsilon\}$. Up to choosing $\varepsilon$ small enough,
we can assume that $\Omega_\varepsilon$ is entirely contained in a
component of $\hi n \setminus \Pi_{p_0}$. Let $\gamma$ be the
geodesic through $p_0$ orthogonal to $\Pi_{p_0}$.

We choose a special rotational domain $\Dc_\Sigma$ such that:
\begin{itemize}
\item the hyperplane $\Pi$ is
orthogonal to $\gamma$, (recall that $U_\Sigma \subset \Pi$)
\item the diameter of $\Dc_\Sigma$ is lesser than $\frac{\ep}{4}$,
\item $\ov{\Om}\cap U_\Sigma=\emptyset$,
\item  $A_0\in \gamma$, $\dist(p_0,A_0) < \frac{\ep}{8}$ and $A_0$ belongs to
the same component of $\hi n \setminus \Pi_{p_0}$ than
$\Omega_\varepsilon$.
\end{itemize}

 Let $M^\prime>\max\{M, g(p_0) +1/k\}.$ We consider the rotational
Scherk hypersurface (graph of $\om$) taking $M^\prime$ boundary
value data on the interior of $\Sigma$ and $-\infty$ on
$U_\Sigma$. By continuity, there exists a point $p_1\in\ga$ where
$\om (p_1)=g(p_0) +1/k.$ Up to a horizontal translation along
$\ga$ sending $p_1$ to $p_0$, we may assume that $\om(p_0)=g(p_0)
+1/k.$ Then we set $\Nc_k=\inter (\Dc_\Sigma)\cap\Om$ and
$\om_k^+=\om_{\mid \Nc_k}$, the restriction of $\om$ to $\Nc_k$.
Therefore we have $\omega_{k}^+(x)_{\mid \partial \Nc_k  \cap
\Omega}=M^\prime \geq M$, furthermore Claim 1 and Claim 2 show
that $\omega_{k}^+(x)_{\mid
\partial\Om \cap  \Nc_k}\geq g(p_0) +1/k \geq g(x)$, as desired.

\bigskip

We now proceed to the proof of Claim 1.  Let $p_1,p_2\in (A_0,B)$
with $p_1<p_2$, we want to show that $\om (p_1)\geq \om (p_2)$.
Let $p_3 \in (p_1,p_2)$ be the middle point of $p_1$ and $p_2$ and
let $\Pi_{p_3}\subset \hi n$ be the geodesic hyperplane through
$p_3$ orthogonal to $(A_0,B)$. We denote by $\sigma$ the
reflection in $\hip^n$ with respect to $\Pi_{p_3}$. Let
$\Dc_\Sigma^+$  be the connected component of $\Dc_\Sigma
\setminus \Pi_{p_3}$ containing $A_0$ and let $\Dc_\Sigma^-$ be
the other component. We denote by $S^+$ the part of the rotational
Scherk hypersurface which is a graph over $\Dc_\Sigma^+$. Observe
that the definition of a special rotational domain ensures that
$\sigma (\Dc_\Sigma^+)\cap \Sigma=\emptyset$. Hence a part of
$\sigma (S^+)$ is the graph of a function $v$ over a part $W$ of
$\Dc_\Sigma^-$ such that $v\geq \om$ on $\partial W$. We conclude
therefore with the aid of the maximum principle that $v\geq \om$
on $W$. This shows that $\om (p_1)\geq \om (p_2)$ as desired.

\medskip

Now let us prove Claim 2. Let $q_1,q_2 \in [D,C]$ with $q_1<q_2$,
we want to show that $\om (q_1) \leq \om (q_2)$. Let $q_3 \in
(q_1,q_2)$ be the middle point of $q_1$ and $q_2$ and let
$\Pi_{q_3}$ be the geodesic hyperplane through $q_3$ orthogonal to
$[D,C]$. Let $\sigma$ be the reflection in $\hip^n$ with respect
to $ \Pi_{q_3}$. Let $\Dc_\Sigma^-$ be the connected component of
$\Dc_\Sigma \setminus  \Pi_{q_3}$ containing $A_0$ and let
$\Dc_\Sigma^+$ be the other component.

\smallskip

\noindent {\bf Assertion.} If $U_\Sigma \cap
\Pi_{q_3}\not=\emptyset$ then there exists a point $X_0\in
U_\Sigma\cap \Dc_\Sigma^+$ such that $\sigma (X_0) \not\in
\Dc_\Sigma $.

\smallskip

We assume this assertion for a while. If $U_\Sigma \cap
\Pi_{q_3}\not=\emptyset$ then for any $Z\in U_\Sigma\cap
\Dc_\Sigma^+$, with $Z\not\in  \Pi_{q_3}$, we have $\sigma
(Z)\not\in \Dc_\Sigma$. Indeed, if not, since $\sigma (X_0)
\not\in \Dc_\Sigma $, we would find by continuity a point $Y\in
U_\Sigma\cap \Dc_\Sigma^+$, with $Y\not\in \Pi_{q_3}$, such that
$\sigma (Y)\in \Pi$ and $\sigma (Y)\not=Y$. Therefore the geodesic
segment $[Y, \sigma(Y)]$ is globally invariant with respect to
$\sigma$. Thus $[Y, \sigma(Y)]$ is orthogonal to $\Pi_{q_3}$ and
therefore $\Pi$ is also orthogonal to $\Pi_{q_3}$. Hence, we
conclude that the whole hyperplane $\Pi$ is invariant by the
reflection $\sigma$, which contradicts the assertion.

We denote by $\Sigma^-$ the connected component of
$\Sigma\setminus \Pi_{q_3}$ which contains $A_0$ and we denote by
$\Sigma^+$ the other component.

Observe that for any $p\in \Sigma^+$ we have $\sigma (p) \not\in
\Sigma^-$. Indeed,  assume first that $p$ lies in the euclidean
segment $\alpha \subset P$ (see Definition  \ref{special domain}).
By construction, $\sigma (p)$ belongs to the equidistant curve
$E_p \subset P$, passing through $p$, of the geodesic line
$\Gamma$ containing the segment $[D,C]$. Recall that $\Gamma$ and
$E_p$ have the same asymptotic boundary. Furthermore, $E_p$ is
symmetric with respect to any geodesic hyperplane orthogonal to
$\Gamma$. Since $\Dc_\Sigma$ is symmetric with respect to the
geodesic hyperplane through $D$ orthogonal to $\Gamma$, we have
that $\sigma (p)\not\in \Sigma^-$. Assume now that $p\in \Sigma^+
\setminus \alpha$. Let us denote by $V$ the 3-dimensional geodesic
submanifold of $\hi n$ containing $p$ and the geodesic two-plane
$P$. Let $H_D\subset \hi n$ be the geodesic hyperplane through $D$
orthogonal to the geodesic $\Gamma$. Then the symmetric of $p$
with respect to  $H_D$, denoted by $p^\ast$, is the same than the
symmetric of $p$ in $V$ with respect to the geodesic two-plane
$V\cap H_D$. As before, $\sigma (p)$ belongs to the equidistant
curve $E_p \subset P$, passing through $p$, of the geodesic line
$\Gamma$. Furthermore $E_p$ is symmetric with respect to the
geodesic hyperplanes $H_D$ and $\Pi_{q_3}$. Now $E_p$ is an arc of
circle passing through $p$ with the same asymptotic boundary than
$\Gamma$. As $\Dc_\Sigma \cap V$ is a compact part of an euclidean
cone we get that $E_p \cap \Sigma =\{p, p^\ast\}$. Since
$\sigma(p) \not=p^\ast$, we conclude that $\sigma (p)\not\in
\Sigma^-$.

\smallskip

Thus the reflected of $\partial \Dc_\Sigma^+ $ by $\sigma$ does
not have any intersection with $\Sigma^-$. We denote by $S^+$ the
part of the rotational Scherk hypersurface which is a graph over
$\Dc_\Sigma^+$.
 Hence a part of
$\sigma (S^+)$ is the graph of a function $v$ over the domain
$W=\sigma (\Dc_\Sigma^+) \cap \Dc_\Sigma^-$ such that $v\geq \om$
on $\partial W$.  We now are able to conclude the proof of Claim
2, assuming the assertion, by applying the maximum principle, to
infer that $\om (q_2)\geq \om (q_1)$.

\smallskip

Finally, if $U_\Sigma \cap \Pi_{q_3}=\emptyset$ by a similar and
simpler argument we complete the proof of Claim 2.

\medskip

To prove the assertion, let us denote by $P_C\subset \hip^n$ the
geodesic two-plane containing the geodesic segments $[A_0,B]$ and
$[D,C]$. Thus $P_C$ is orthogonal to $\Pi_{q_3}$, since it
contains $[D,C]$, and is orthogonal to $\Pi$, since it contains
$[A_0,B]$. We consider the open geodesic segment  $\gamma_1=P_C
\cap U_\Sigma$ and the geodesic line $\gamma_2=P_C \cap
\Pi_{q_3}$. Assume that $U_\Sigma \cap \Pi_{q_3}\not=\emptyset$.
Then, since $P_C$ is orthogonal to $\Pi$ and to $\Pi_{q_3}$  we
have $\gamma_2 \cap U_\Sigma \not=\emptyset$. Therefore $\gamma_2$
intersects $\gamma_1$ at some point $\{z\}=\gamma_1 \cap
\gamma_2$.

 Observe that the points $D, q_3,z$ and $B$ define a
geodesic quadrilateral $\Qc$ in $P_C$ with right angles at
vertices $B,D$ and $q_3$. Therefore the interior angle of $\Qc$ at
$z$ is strictly smaller than $\pi/2$. Let us denote by
$\gamma_1^+\subset \gamma_1$ the connected component of
$\gamma_1\setminus \{z\}$ which does  not contain $B$. Observe
that $\gamma_1^+\subset U_\Sigma\cap \Dc_\Sigma^+$.
 Let $s$ be the reflection in $P_C$ with respect to
$\gamma_2$. Then $s(\gamma_1^+)$ does not have intersection with
$\Dc_\Sigma$, $s(\gamma_1^+)\cap \Dc_\Sigma=\emptyset $. Since
$P_C$ is orthogonal to $\Pi_{q_3}$ we have that
$s(\gamma_1^+)=\sigma(\gamma_1^+)$. Therefore for any $X\in
\gamma_1^+$ we have $\sigma (X) \not\in \Dc_\Sigma$ as claimed,
this completes the  proof.
\end{proof}

\begin{definition}[\bf {\em Independent points and admissible polyhedra}]\label{pol}
${}$
\begin{enumerate}
\item We say that $n+1$ points $A_0,\ldots,A_n$ in $\hi n$ are
{\em independent} if there is no geodesic hyperplane containing
these points. If  $A_0,\ldots,A_n$ in $\hi n$ are independent then
we remark that any choice of $n$ points among them determines a
unique geodesic hyperplane of $\hi n$.

\item Let $A_0,\ldots,A_n$ be $n+1$ independent points in $\hi n$.
We call $\Pi_i$ the geodesic hyperplane containing these points
excepted $A_i$, $i=0,\ldots, n$ and we call $\Pi_i^+$ the closed
half-space bounded by $\Pi_i$ and containing $A_i$. Then the
intersection of these half-spaces is a polyhedron $\Pc$: the
convex closure of $A_0,\ldots,A_n$. The boundary of $\Pc$ consists
of $n+1$ closed faces $F_i\subset \Pi_i$, the face $F_i$ contains
in its boundary all the points $A_0,\ldots,A_n$ excepted $A_i$. We
call such a polyhedron an {\em admissible polyhedron}.
\end{enumerate}
\end{definition}

\begin{corollary} \label{vt}  Let $\mathcal P$ be an admissible
polyhedron. For any number $t\in\R,$ there is a unique solution
$v_t$ of the vertical minimal equation in $\mathrm{int} (\Pc)$
which extends continuously to $\partial \Pc\setminus
\partial F_0,$ taking prescribed zero boundary value data on
$F_1\setminus
\partial F_0,\ldots ,F_n\setminus \partial F_0$ and prescribed
boundary value $t$ on $\mathrm{int}(F_0).$ More precisely, for any
$t\in \R$, the following Dirichlet problem $(P_t)$ admits a unique
solution $v_t$.
\begin{equation*}
({P}_t )\begin{cases} \mathcal M (u)= 0\  \mathrm{ in }\ \mathrm{int}(\Pc), \\
                             u = 0 \  \mathrm{ on }\  F_j\setminus \partial F_0,\,
j=1,\ldots,n, \\
                             u= t \  \mathrm{ on }\ \mathrm{int}
                             (F_0), \\
                             u\in C^2\left(\mathrm{int} (\Pc)\right) \cap
                             C^0\left(\Pc \setminus  \partial
                             F_0\right).
            \end{cases}
\end{equation*}
Furthermore, the solutions $v_t$ are strictly increasing with
respect to $t$  and satisfy $0< v_t <t$ on $\inter (\Pc)$.
\end{corollary}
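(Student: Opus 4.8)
The plan is to mimic the proof of Proposition \ref{special t}, replacing the special rotational domain $\Dc_\Sigma$ with an admissible polyhedron $\Pc$ and the rotational barriers with the translation hypersurfaces $M_d$ associated to the face $F_0$. First I would fix $t>0$ (the case $t<0$ following by reflection $u\mapsto -u$, and $t=0$ being trivial by the maximum principle) and set up an auxiliary Dirichlet problem $(\widehat{P}_k)$ with continuous boundary data $g_k$ on all of $\partial\Pc$: namely $g_k\equiv t$ on $F_0$, $g_k\equiv 0$ on each $F_j$ ($j\geq 1$) outside a thin collar $V_k$ of width $1/k$ around $\partial F_0$, and $g_k$ interpolating in $[0,t]$ inside $V_k$ in such a way that its graph stays above a suitable translated copy $M_{d_k}$ of $M_d$ ($d_k<1$) chosen, using the $M_d$-Property \eqref{mdp} and the fact that $\lambda_k$ is odd, so that $\lambda_k(0)=t$ and $\lambda_k$ drops below $-1$ at distance $1/k$ from the hyperplane $\Pi_0$ containing $F_0$. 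Here one needs the geometry: $M_{d_k}$ should be a graph over a half-space bounded by $\Pi_0$ containing $\Pc$, translated so its infinite boundary sits on $\Pi_0\supset F_0$.

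Next I would check that every point of $\partial\Pc$ admits a barrier for $(\widehat{P}_k)$, so that Theorem \ref{perron} (Perron process) applies below the constant supersolution $\phi\equiv t$, producing a solution $w_k$ with $0\leq w_k\leq t$ (the zero function is a subsolution). The barrier check splits into two cases: at interior points of the faces $F_j$ ($j=1,\ldots,n$) and at $\mathrm{int}(F_0)$, $\partial\Pc$ is $C^0$ convex but actually locally flat (lying in the geodesic hyperplane $\Pi_j$), so it is strictly convex there in the weak sense needed, and the family $M_d$, $d<1$, gives a barrier as in the proof of Theorem \ref{perron}-(\ref{bco}); at $\mathrm{int}(F_0)$ one uses $M_{d_k}$ as a lower barrier and the constant $t$ as an upper barrier in the sense of Definition \ref{d1}-(\ref{D.Item.bar.2}). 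The only delicate points are the edges $\partial F_i\cap\partial F_j$, i.e. the lower-dimensional faces of the polyhedron; but since the data $g_k$ is continuous on $\partial\Pc$ and $\Pc$ is convex, every boundary point is a $C^0$ convex point, so once Theorem \ref{Sch.B} is available (which it is, being proved earlier in the excerpt) the rotational Scherk hypersurfaces provide a barrier at every such point. This is exactly why the statement is phrased as a corollary.

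Then I would pass to the limit: using the reflection principle (Lemma \ref{ref}) across each hyperplane $\Pi_j$, $j=1,\ldots,n$ (along which $w_k=0$), each point of $\mathrm{int}(F_j)$ becomes interior to the domain of an extended solution bounded between $0$ and $2t$, a bound independent of $k$; this gives uniform local boundedness near the faces $F_j\setminus\partial F_0$, while on $F_0$ one already has $w_k\to t$ forced by the $M_{d_k}$ lower barrier. By Spruck's compactness principle (Remark \ref{spr}) a subsequence of $(w_k)$ converges, uniformly on compact subsets of $\mathrm{int}(\Pc)$, to a solution $v_t$ of the minimal equation, with $v_t=t$ on $\mathrm{int}(F_0)$; and since $\mathrm{int}(F_j)$ carries barriers, $v_t$ extends continuously by $0$ there. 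The inequality $0<v_t<t$ on $\mathrm{int}(\Pc)$ follows from $0\leq v_t\leq t$ together with the strong maximum principle (equality would force $v_t$ constant, contradicting the boundary values on two different faces).

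For uniqueness and monotonicity I would reproduce verbatim the two arguments in the proof of Proposition \ref{special t}. Uniqueness: given two solutions $u,v$, truncate $\varphi=\min(N,(u-v-\varepsilon)^+)$, apply the divergence theorem on $\Pc$ minus a small $(n-1)$-volume neighborhood of the "bad" vertex $A_0$ (where continuity up to the boundary fails — here it fails along $\partial F_0$, so one removes a neighborhood of $\partial F_0$ of small $(n-1)$-volume, using that $\partial F_0$ is lower-dimensional), invoke the pointwise inequality $\langle\nabla u-\nabla v,\frac{\nabla u}{W_Mu}-\frac{\nabla v}{W_Mv}\rangle\geq0$ from \cite[Lemma 2.1]{HRS}, let the neighborhood shrink to conclude $\nabla u\equiv\nabla v$ on $\{u>v\}$, then use analyticity to get a contradiction with $u=v$ on $\partial\Pc\setminus\partial F_0$. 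Monotonicity: for $0<t_1<t_2$, translate the graph of $v_{t_1}$ a small distance $\varepsilon$ in the direction pointing away from $F_0$ (the analogue of the orientation $A_0\to B$), so the translated domain no longer meets the faces $F_j$ ($j\geq1$); since $0<v_{t_1}<t_1$ the translated graph lies below $v_{t_2}$ on the boundary of the overlap domain, so by the maximum principle it lies below in the interior, and letting $\varepsilon\to0$ gives $v_{t_1}\leq v_{t_2}$, with strictness again from the strong maximum principle. The main obstacle I anticipate is the geometric bookkeeping in step one — verifying that a translated copy of $M_{d_k}$ can genuinely be placed as a graph over a region containing $\Pc$ with infinite data on $\Pi_0$ and controlled finite data at distance $1/k$ — and in the uniqueness step, checking that the set where continuity up to the boundary fails really has $(n-1)$-volume tending to zero as the excised neighborhood shrinks (this is where the "admissible polyhedron" hypothesis, forcing $\partial F_0$ to be a nice lower-dimensional polyhedral set, is used).
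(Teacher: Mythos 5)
Your uniqueness and monotonicity arguments are exactly the paper's (it simply says they are proved as in Proposition~\ref{special t}: excise a tube of small $(n-1)$-volume around $\partial F_0$ for the divergence argument, and for monotonicity translate along the geodesic through $A_0$ and a point $q\in \inter (F_0)$, to which $\partial \Pc$ is transversal); note only that the translation must be \emph{toward} $F_0$, i.e.\ the analogue of the orientation $A_0\to B$, not ``away from $F_0$'' as you first wrote. For existence, however, the paper does none of your auxiliary construction: since every point of $\partial \Pc\setminus \partial F_0$ is a $C^0$ convex point at which the data ($t$ on $\inter (F_0)$, $0$ on the other faces) is continuous, Theorem~\ref{Sch.B} already provides barriers there, and the Perron process (Theorem~\ref{perron}), applied directly to this data (discontinuous only along $\partial F_0$) between the constant sub- and supersolutions $0$ and $t$, yields $v_t$ at once, with $0<v_t<t$ by the maximum principle. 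This is why the statement is a corollary, and your own remark to this effect makes the whole $(\widehat{P}_k)$ scheme redundant.

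Moreover, two steps of that scheme are flawed as written. First, an interior point of a face $F_j$ is convex but \emph{not} strictly convex in the sense of the paper (any geodesic hyperplane through such a point either contains a whole neighborhood of it inside $F_j$ or is crossed by $F_j$), so the $M_d$, $d<1$, barriers of Theorem~\ref{perron}-(\ref{bco}) do not apply there: the equidistant hypersurface $E(\rho)$ fails to cut off a small neighborhood of the point in $\partial \Pc$, since the whole flat face lies at distance $0$ from $\Pi_j$; filling exactly this gap is the purpose of Theorem~\ref{Sch.B}, on which you should rely at all face points. Second, and more seriously, the assertion that ``on $F_0$ one already has $w_k\to t$ forced by the $M_{d_k}$ lower barrier'' does not hold: each $\lambda_k$ is adapted to $w_k$ only and degenerates as $k\to \infty$ (it drops from $t$ to below $-1$ within distance $1/k$ of $\Pi_0$), so at any fixed interior point it gives nothing beyond $w_k\geqs 0$, and it does not force the limit $v_t$ to attain the value $t$ continuously on $\inter (F_0)$. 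To get that, you must either reflect across $\Pi_0$ after subtracting the constant $t$ (this is precisely what the paper does across $\Pi$ at $U_\Sigma$ in Proposition~\ref{special t}; your reflections across the $\Pi_j$ only handle $\inter (F_j)\setminus \partial F_0$), or use a $k$-independent rotational Scherk lower barrier from Theorem~\ref{Sch.B} at each point of $\inter (F_0)$. With these repairs (or, more simply, with the direct appeal to Theorem~\ref{Sch.B} plus Theorem~\ref{perron}) your proof is complete.
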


\begin{proof}
The  existence part of the statement  is  a consequence of Theorem
\ref{Sch.B}.

 The uniqueness is proved in the same way as in
Proposition \ref{special t}.

To prove the monotonicity of the family $\{v_t\}$ we consider a
point $q\in \inter (F_0)$. Notice that $\partial \Pc$ is
transversal to the Killing field generated by translations along
the geodesic line $\ga$ containing $A_0$ and $q$. Then the proof
proceeds as in the proof of Proposition \ref{special t}.
\end{proof}

Using the above proposition we are able to construct a {\em
 Scherk type  minimal hypersurface} in $\hip^{n}\times \R.$

\begin{theorem}[{\bf {\em First Scherk type hypersurface in $\hip^n\times
 \R$}}]\label{Sch1}
  Let $\mathcal P$ be an admissible convex polyhedron. There
  is a unique solution $v_\infty$ of the minimal
equation in $\inter (\Pc)$ extending continuously up to
 $\partial \Pc\setminus  F_0$, taking prescribed zero
boundary value data on $F_1\setminus \partial F_0,\ldots
,F_n\setminus \partial F_0$ and prescribed boundary value $\infty$
for any approach to $\inter(F_0).$ More precisely, we prove
existence and uniqueness of the following Dirichlet problem
$(P_\infty)$:
\begin{equation*}
({P}_\infty )\begin{cases}
 \mathcal M (u)= 0\   \mathrm{ in}\ \inter(\Pc),  \\
                             u = 0 \ \mathrm{ on }\  F_j\setminus \partial F_0,\,
j=1,\ldots,n,  \\
                             u= \infty\  \mathrm{ on }\ \mathrm{int}   (F_0),  \\
                             u\in C^2\left(\mathrm{int} (\Pc)\right) \cap
                             C^0\left(\Pc \setminus  F_0\right).
            \end{cases}
\end{equation*}
\end{theorem}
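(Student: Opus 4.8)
The strategy is the direct analogue of the proof of Theorem \ref{special Scherk}: obtain $v_\infty$ as a monotone limit of the finite-data solutions $v_t$ supplied by Corollary \ref{vt}, and control this limit from above by a translated copy of $M_1$. First I would fix the admissible convex polyhedron $\Pc$ and the family $\{v_t\}_{t>0}$ of solutions of $(P_t)$ from Corollary \ref{vt}; recall these satisfy $0<v_t<t$ on $\inter(\Pc)$ and are strictly increasing in $t$. Since $\Pc$ is convex, the face $F_0\subset\Pi_0$ lies on one side of the geodesic hyperplane $\Pi_0$, and $\Pc$ is contained in the closed half-space $\Pi_0^+$. Take $M_1$ (see Section \ref{S.Md}) realized as the graph of a function $u_1$ over the open half-space of $\hip^n$ bounded by $\Pi_0$ that contains $\inter(\Pc)$, with $u_1=+\infty$ on $\Pi_0$ and zero asymptotic boundary value data. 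By the classical maximum principle (Remark \ref{clm}), comparing $v_t$ and $u_1$ on $\inter(\Pc)$ — noting $v_t=0\leq u_1$ on $F_j\setminus\partial F_0$ for $j\geq 1$ (these faces are contained in $\Pi_0^+\setminus\Pi_0$, where $u_1$ is finite and positive), while on $\inter(F_0)$ we have $v_t=t<+\infty=u_1$ — gives $v_t(p)<u_1(p)$ for every $p\in\inter(\Pc)$ and every $t>0$.

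Next I would invoke Spruck's compactness principle (Remark \ref{spr}): the family $\{v_t\}$ is increasing and uniformly bounded above on compact subsets of $\inter(\Pc)$ by $u_1$, hence as $t\to+\infty$ it converges, uniformly on compact subsets of $\inter(\Pc)$, to a function $v_\infty\in C^2(\inter(\Pc))$ with $\Mc(v_\infty)=0$. Because the family is strictly increasing and $v_t\geq$ (any barrier value) near $F_0$, for any sequence $(q_k)$ in $\inter(\Pc)$ with $q_k\to q\in\inter(F_0)$ one gets $v_\infty(q_k)\to+\infty$: indeed for fixed $t$, $v_\infty\geq v_t$ and $v_t$ extends continuously by $t$ at $q$, so $\liminf_k v_\infty(q_k)\geq t$ for every $t$. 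Finally, for $p\in F_j\setminus\partial F_0$ with $j\geq 1$: the polyhedron boundary $\partial\Pc$ is $C^0$ convex at such $p$ (it lies on one side of $\Pi_j$), so by Theorem \ref{Sch.B} (equivalently Theorem \ref{perron}-(\ref{bco}) with $C^0$ convex in place of strictly convex) the rotational Scherk hypersurfaces provide a barrier at $p$; since $0\leq v_t\leq v_\infty\leq u_1$ and the barrier pinches the boundary value to $0$ there uniformly in $t$, $v_\infty$ extends continuously by $0$ at $p$. This shows $v_\infty$ solves $(P_\infty)$.

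For uniqueness I would follow exactly the argument used for the monotonicity of $\{v_t\}$ in Proposition \ref{special t}, i.e. the divergence-theorem/flux estimate adapted from \cite[Theorem 2.2]{HRS}: given two solutions $u,v$ of $(P_\infty)$, introduce the truncation $\varphi$ of $u-v$ supported away from $F_0$, integrate $\langle\nabla u-\nabla v,\tfrac{\nabla u}{W_M u}-\tfrac{\nabla v}{W_M v}\rangle\geq 0$ over a subdomain obtained by excising a small neighborhood $C_\varepsilon$ of the offending edge $\partial F_0$ whose $(n-1)$-volume $V_\varepsilon\to 0$; the boundary flux is $O(NV_\varepsilon)$, so letting $\varepsilon\to 0$ and then $N\to+\infty$ forces $\nabla u\equiv\nabla v$ on $\{u>v\}$, hence by analyticity $u\leq v$, and symmetrically $v\leq u$. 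The one point requiring a little care here, and the step I expect to be the main obstacle, is the excision geometry near $\partial F_0$: one must choose the hypersurfaces $C_\varepsilon$ shrinking onto the codimension-two skeleton $\partial F_0$ of $\Pc$ so that $\varphi$ vanishes on the remaining boundary and the vanishing-volume estimate still holds, which is slightly more delicate for a polyhedron than for the smooth rotational domain $\Dc_\Sigma$; this is handled just as in \cite{HRS}, so it is sketched rather than carried out in detail.
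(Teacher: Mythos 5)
Your existence argument is essentially the paper's: the paper also takes the family $\{v_t\}$ from Corollary \ref{vt} (whose existence rests on the rotational Scherk barriers of Theorem \ref{Sch.B}), lets $t\to\infty$ exactly as in Theorem \ref{special Scherk} with a copy of $M_1$ over the half-space bounded by $\Pi_0$ serving as the uniform upper bound, and recovers the zero boundary values on $F_j\setminus\partial F_0$ from the rotational Scherk barriers at $C^0$ convex points. That half of your proposal is fine.

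For uniqueness, however, you depart from the paper and your sketch has a genuine gap. The paper does \emph{not} use the divergence/flux argument of \cite[Theorem 2.2]{HRS} for the infinite-data problem: it reduces uniqueness to the horizontal translation (sliding) argument used to prove monotonicity of $\{v_t\}$ in Proposition \ref{special t} and in Corollary \ref{vt}, exploiting that $\partial\Pc$ is transversal to the Killing field of translations along the geodesic through $A_0$ and a point $q\in\inter(F_0)$, plus the maximum principle. Your flux argument, as written, fails at the face $F_0$ itself rather than at the edge $\partial F_0$: on $\inter(F_0)$ both solutions tend to $+\infty$, so $u-v$ is not controlled there and the truncation $\varphi$ has no reason to vanish near $\inter(F_0)$; the boundary contribution along surfaces approaching $\inter(F_0)$ is only bounded by $N$ times the $(n-1)$-volume of $F_0$, which is a fixed positive quantity, so it does not disappear when $\varepsilon\to 0$. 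The excision near the codimension-two set $\partial F_0$ (which you single out as the main obstacle) is in fact the harmless part, exactly as in Proposition \ref{special t}, where the finite data $t$ guarantees $u=v$ continuously on $U_\Sigma$. To push the flux method through for $(P_\infty)$ one needs the Jenkins--Serrin/HRS flux lemma for faces carrying $+\infty$ data (the normal component of $\nabla u/W_M u$ tends to $1$ along such a face, so the difference flux vanishes in the limit), an ingredient neither stated in this paper nor supplied in your sketch. Either add that lemma, or argue as the paper does via the sliding comparison along the Killing direction.
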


\begin{proof}
 With the aid of Theorem
\ref{Sch.B} we may use the rotational Scherk hypersurfaces as
barrier. Therefore, we obtain for any $t\in \R$ a solution $v_t$
of the vertical minimal equation in $\inter (\Pc)$ which extends
continuously to $\partial \Pc \setminus \partial F_0$, taking
prescribed zero boundary value data on $\partial \Pc \setminus
F_0$ and prescribed boundary value $t$ on $\inter (F_0)$. Now
letting $t\to \infty$  as in the proof of Theorem \ref{special
Scherk}  we have that  a subsequence of the family $\{v_t\}$
converges to a solution as desired, taking into account that the
rotational Scherk hypersurfaces give a barrier at any point of
$\Pc$.

 The
uniqueness is obtained as in the proof of the monotonicity of the
family $\{v_t\}$ in  Proposition \ref{special t}, see also the
proof of Corollary \ref{vt}.
 \end{proof}

\begin{theorem}[{\bf {\em Second Scherk type hypersurface in $\hip^n\times
 \R$}}] \label{Sch2} For any $k\in \Na, \, k\geqs 2,$ there exists a family of
 polyhedron $\Pc_k$ with $2^{n-1} k$  faces and a solution $w_k$
 of the vertical minimal equation in $\inter \Pc_k$ taking
 alternatively infinite values $+\infty$ and $-\infty$ on adjacent faces of
 $\Pc_k$. Moreover, the polyhedron $\Pc_k$  can be chosen to be
 convex and can also be chosen to be non convex.
\end{theorem}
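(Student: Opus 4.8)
The plan is to build the polyhedron $\Pc_k$ by gluing together $k$ copies of a fundamental ``wedge'' arranged rotationally symmetrically around a common axis $\gamma$ in $\hi n$, and then to produce the solution $w_k$ by solving a mixed Dirichlet problem on each wedge, with $+\infty$ data on one lateral face and $-\infty$ data on the adjacent one, and reflecting. More precisely, fix a geodesic $\gamma$ and a geodesic hyperplane $\Pi_0$ orthogonal to $\gamma$; choose $k$ geodesic hyperplanes $H_1,\dots,H_k$ all containing $\gamma$ and cutting $\Pi_0$ in $k$ hyperplanes through a common point, equally spaced so that consecutive ones make angle $\pi/k$ (in the $n=2$ case these are just $k$ lines through a point; in higher dimensions each ``face'' will itself be subdivided, which is where the factor $2^{n-1}$ enters). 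The region $\Pc_k$ is then the intersection of the half-spaces bounded by the $H_i$ with the slab determined by $\Pi_0$ and one further hyperplane, arranged so that $\Pc_k$ is an admissible polyhedron (or a union of admissible polyhedra) in the sense of Definition \ref{pol}, with its $2^{n-1}k$ faces falling into two classes that alternate as one goes around $\gamma$.

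The key steps, in order, are: (1) Describe the combinatorics of $\Pc_k$ carefully and check that each ``wedge'' piece is an admissible convex polyhedron, so that Theorem \ref{Sch1} applies to it — this gives, on each wedge, a solution taking value $+\infty$ on the distinguished face $F_0$ and $0$ on all the others, in particular $0$ on the two lateral faces contained in the bounding hyperplanes $H_i$. (2) Use the reflection principle, Lemma \ref{ref}, across each lateral face: since the wedge solution vanishes there, it extends analytically across $H_i$ to the adjacent wedge as the negative of its reflection, hence taking value $-\infty$ on the image of $F_0$. Iterating this reflection $k$ times around $\gamma$ — and using that the $k$-fold composition of reflections in hyperplanes through $\gamma$ making angles $\pi/k$ is the identity (the angles close up) — produces a single well-defined function $w_k$ on $\inter(\Pc_k)$ satisfying the minimal equation, with values alternating $+\infty, -\infty, +\infty,\dots$ on the successive faces. (3) Observe that the same construction works verbatim if one bends the hyperplanes so the domain becomes non convex: the reflection principle and Theorem \ref{Sch1} only require each wedge to be an admissible (convex) polyhedron, while the global union $\Pc_k$ need not be convex; alternatively, one may replace some of the wedges by larger admissible polyhedra to break convexity while keeping the alternation of boundary data intact.

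The main obstacle I expect is Step (2): checking that the reflections actually close up consistently around $\gamma$ — i.e.\ that after going all the way around, the analytically continued function agrees with the original branch. This requires that the dihedral angles along the edges meeting $\gamma$ sum to exactly $2\pi$ and that the alternation $+\infty/-\infty$ is compatible with the sign flip $\wt u = -u\circ I$ produced by each reflection (so one needs $k$ to be such that after $k$ sign flips and the geometric return one is back to the start; since each reflection both flips the sign and swaps the two classes of faces, consistency forces exactly the alternating pattern, and $k$ arbitrary $\geqs 2$ works because the two classes simply interchange). A secondary technical point is arranging, in dimension $n>2$, the internal subdivision of each of the $k$ ``sectors'' into $2^{n-2}$ admissible sub-polyhedra (accounting for the stated count $2^{n-1}k = k\cdot 2\cdot 2^{n-2}$), and verifying that the reflection principle applies across the newly introduced interior faces where $w_k$ also vanishes by symmetry; this is routine once the combinatorial picture of Definition \ref{pol} is set up, but it is the part that demands care.
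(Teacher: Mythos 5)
Your core construction is essentially the paper's: take a fundamental admissible (simplex) polyhedron having one distinguished face $F_0$ and zero data on the remaining faces, two of which meet at dihedral angle $\pi/k$ along a codimension-two ``axis'' while the others are mutually orthogonal and orthogonal to those two; solve on it by Theorem \ref{Sch1}; and propagate by the reflection principle (Lemma \ref{ref}), the sign flipping at each reflection, so that the $2k$ reflections around the axis times the $n-2$ further orthogonal reflections produce the $2^{n-1}k$ faces with alternating $\pm\infty$ data. Two imprecisions in your write-up are worth fixing but are not fatal: going around the axis uses $2k$ reflections (hence an even number of sign flips, which is why every $k\geqs 2$ is admissible), not $k$; and in dimension $n>2$ the extra walls must be chosen \emph{orthogonal} to each other and to the two walls at angle $\pi/k$ (the paper realizes this by the explicit simplex with vertices $A_0,\ldots,A_n$, where $A_1$ lies on the ray making angle $\pi/k$ with the $A_2$-ray), so that each reflection leaves the other walls globally invariant and the group closes up; this is exactly the point you defer as ``routine'' and it is where the count $2^{n-1}k$ actually comes from.

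The genuine gap is your step (3), the non-convex case. The gluing mechanism is analytic continuation by reflection, so each new wedge is forced to be the exact mirror image of the previous one across the common wall: you cannot ``bend the hyperplanes'' so that adjacent sectors have different angles, nor ``replace some of the wedges by larger admissible polyhedra,'' because then the reflected extension of the solution lives on the mirror image of the old wedge, not on the new one, and matching two independently constructed solutions along a wall where both merely vanish does not give a $C^1$ (let alone analytic) solution. The paper obtains non-convexity without touching the reflection structure: all $2^{n-1}k$ pieces remain congruent copies of the fundamental simplex, and convexity of $\Pc_k$ is shown to be equivalent to convexity of the planar $2k$-gon $\Gamma$ obtained by reflecting the segment $[A_0,A_1]$ around $A_0$; choosing $d(A_0,A_1)=d(A_0,A_2)$ gives a regular, hence convex, $\Gamma$, while taking $d(A_0,A_1)$ much larger than $d(A_0,A_2)$ makes $\Gamma$, and hence $\Pc_k$, non-convex. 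So the freedom you need is in the shape of the fundamental simplex, not in the arrangement of the reflecting walls.
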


\begin{proof}

Let us fix a point $A_0$ in $\hip^n.$ Let $\{e_1,\ldots,e_n\}$ be
a positively oriented orthornormal  basis of $T_{A_0} \hip^n$. For
$k\geqs 2$ we set $u:= \sin(\pi/k) e_1 + \cos(\pi/k)e_2$. Let
$\ga_j^+,\, j=2,\ldots, n$ and $\ga_u^+$ be the oriented half
geodesics issuing from $A_0$ and tangent to $ e_2,\ldots, e_n$ and
to $u$, respectively.  Now we choose an interior point $A_1$ on
$\ga_u^+$ and an interior point $A_j$ on $\ga_j^+,\, j=2,\ldots,
n$. Therefore, $A_0,A_1,\ldots,A_n$ are independent points of
$\hip^n.$ Let $\widetilde{\Pc}$ be the polyhedron determined by
these points. The faces are denoted by $F_0,\ldots, F_n$, with the
convention that the face $F_j$ does not contain the vertex  $A_j$,
$j=0,\ldots,n$.

Let $\Pi_i$  the totally geodesic hyperplane containing the face
$F_i.$ Observe that:

\begin{enumerate}

\item $F_1$ and $F_2$ make an interior angle equal to $\pi/k.$

\item $F_j\perp F_1,$ $F_j\perp F_2, j=3,\ldots, n.$

\item \label{orth} $ F_j\perp F_k, j,k = 3,\ldots, n\, (j\not=k).$

\end{enumerate}

Therefore, the reflections in $\hip^n$ with respect to the
geodesic hyperplanes $\Pi_1$ and $\Pi_2$ leave the other geodesic
hyperplanes $\Pi_j, j=3,\ldots, n$ globally invariant.  The first
step of the construction of the polyhedron $\Pc_k$ is the
following: Doing reflection about $F_2$ we obtain another
polyhedron with faces $F_1^\ast$ (the symmetric of $F_1$ about
$F_2$), and faces $\widetilde {F_j}$ containing $F_j$, $\widetilde
{F_j}\subset\Pi_j,\, j=3,\ldots,n. $ Notice that in the process
the face $F_2$ disappears and the interior angle between the faces
$F_1$ and $F_1^*$ is $2\pi/k.$ Furthermore, the reflection of
$F_0$ about $F_2$ generates another face $F_0^1.$

 Continuing this
process doing reflections with respect to $F_1^\ast$ and so on, we
obtain a new polyhedron $\Pc^+$ with faces
$\widehat{F_j}\subset\Pi_j,\, j=3,\ldots,n,$ $\widehat{F_j}$
containing $\widetilde F_j$, and $2k$ faces issuing from  the
successive reflections of $F_0 $. Notice that both faces $F_1$ and
$F_2$ disappear at the end of the process, that is $\Pc^+$ does
not contain any face in the hyperplane $\Pi_1$ or $\Pi_2.$

Next, let us perform the reflections about $\Pi_3.$ Doing this the
face $F_3$ disappears and we get a new polyhedron with $2\cdot 2k$
faces issuing from $F_0$ and a face in each $\Pi_j, \,
j=4,\ldots,n,$ by Property (\ref{orth}). Each such face contains
$\widehat{F_j},\, j=4,\ldots,n.$ Continuing this process doing
reflections on $\Pi_4,\ldots,\Pi_n$ we finally get a polyhedron
$\Pc_k$ with $2^{n-1}\cdot k$ faces, each one issuing from $F_0.$

Now we discuss the convexity of $\Pc_k$. Let $P\subset \hi n$ be
the geodesic two-plane containing the points $A_0,A_1$ and $A_2$.
Let $\Gamma \subset P$ be the geodesic polygon obtained by the
reflection of the segment $[A_0,A_1]$  with respect to $[A_0,A_2]$
and so on. Thus $\Gamma$ is a polygon with $2k$ sides and $2k$
vertices, among them $A_1$ and $A_2$, and $A_0$ is an interior
point of $\Gamma$. Then, the polyhedron $\Pc_k$ is convex if, and
only if, the polygon $\Gamma$ is convex too. For example, if
$d(A_0,A_1)=d(A_0,A_2)$ we get that $\Gamma$ is a regular polygon
and then is convex. On the other hand, if $d(A_0,A_1)$ is much
bigger than $d(A_0,A_2)$ then $\Gamma$ is non convex.

\medskip

 Now, considering the polyhedron $\widetilde{\Pc}$ of the
beginning, with the aid of  \thmref{Sch1}, we are able to solve
 the Dirichlet problem of the minimal equation taking
$+\infty$ value data on $F_0$  and zero value data on
$F_j\setminus F_0$, $ j=1,\ldots, n.$ Using the reflection
principle on the faces, in each step of the preceding process, we
obtain at the end of the process a solution of the minimal
equation on $\inter \Pc_k,$ taking
 alternatively infinite values $+\infty$ and $-\infty$ on adjacent faces of
 $\Pc_k,$ as desired. This accomplishes  the proof of the theorem.
\end{proof}

\medskip

The following theorem are consequence of the previous results.

\medskip

\begin{theorem}[{\bf{\em Dirichlet problem for the minimal
equation in $\hip^n\times \R$  on a $C^0$  bounded convex domain
taking continuous boundary data}}]${}$\label{C.bounded.data2}

 Let $\Omega $ be a $C^0$ bounded convex  domain and let
$g: \partial\Om\rightarrow \R$ be a continuous function.

Then, $g$ admits a unique continuous extension $u:  \Omega\cup
\partial\Om\rightarrow \R$ satisfying the
vertical minimal hypersurface equation $(\ref{minequ})$ on
$\Omega$.
\end{theorem}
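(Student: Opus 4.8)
The plan is to combine the Perron process established in \thmref{perron} with the barrier at a $C^0$ convex point provided by \thmref{Sch.B}. Since $\Omega$ is bounded there is no asymptotic boundary to deal with, so $g:\partial\Omega\to\R$ is a bounded continuous function and assertion (\ref{Item.asym.bound}) of \thmref{perron} is vacuous.

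First I would apply the Perron process: take the constant supersolution $\phi\equiv\sup g$, form $\Sc_\phi=\{\varphi\ \text{subsolution of}\ (P),\ \varphi\le\phi\}$ (nonempty, since the constant $\inf g$ lies in it), and set $u(x)=\sup_{\varphi\in\Sc_\phi}\varphi(x)$. By \thmref{perron}-(\ref{Item.minequ}), $u\in C^2(\Omega)$ and satisfies the minimal equation $(\ref{minequ})$ on $\Omega$. It remains to show $u$ extends continuously to each boundary point $p\in\partial\Omega$ with value $g(p)$.

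The key step is boundary regularity. Since $\Omega$ is $C^0$ convex, it is $C^0$ convex at every $p\in\partial\Omega$; and $g$, being continuous on all of $\partial\Omega$, is in particular continuous at $p$. Hence \thmref{Sch.B} applies and furnishes a barrier (both an upper and a lower barrier) at $p$ for the problem $(P)$, using the family of rotational Scherk hypersurfaces. By \thmref{perron}-(\ref{Item.fin.bound}), the existence of a barrier at $p$ forces $u$ to extend continuously at $p$ with $u(p)=g(p)$. Doing this at every boundary point yields a continuous function $u:\Omega\cup\partial\Omega\to\R$ extending $g$ and solving $(\ref{minequ})$.

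Finally, uniqueness is immediate from the classical maximum principle recalled in \remref{clm}: if $u_1,u_2$ are two continuous extensions of $g$ on $\overline\Omega$ solving $(\ref{minequ})$ on the bounded domain $\Omega$, then $g\le g$ gives both $u_1\le u_2$ and $u_2\le u_1$, so $u_1=u_2$. I do not expect a serious obstacle here, as the theorem is essentially a packaging of \thmref{perron} and \thmref{Sch.B}; the only point requiring a word of care is checking that $C^0$ convexity of $\Omega$ at an arbitrary boundary point is exactly the hypothesis under which \thmref{Sch.B} produces the barrier (no strict convexity is needed, thanks to the rotational Scherk barriers), which is precisely the improvement over \thmref{perron}-(\ref{bco}) recorded in the last sentence of \thmref{Sch.B}.
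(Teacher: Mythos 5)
Your proposal is correct and follows essentially the same route as the paper: existence via the Perron process of \thmref{perron} combined with the barriers at $C^0$ convex boundary points furnished by the rotational Scherk hypersurfaces of \thmref{Sch.B}, and uniqueness by the classical maximum principle of \remref{clm}. No gaps.
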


\begin{proof}
The proof is a consequence of the Perron process (\thmref{perron})
and the construction of barriers at any convex point of a $C^0$
domain, using  rotational Scherk hypersurfaces (\thmref{Sch.B}).
Uniqueness follows from the maximum principle.
\end{proof}

\begin{theorem}[{\bf{\em Dirichlet problem for the minimal
equation in $\hip^n\times \R$  on a $C^0$ convex domain taking
continuous finite and asymptotic boundary data}}]
\label{C.bounded.data3}${}$

Let $\Omega \subset \hip^n$ be a $C^0$  convex domain and let $g:
\partial\Om\cup \partial_\infty\Om\rightarrow \R$ be a continuous
function.

Then $g$ admits a unique continuous extension $u:  \Omega\cup
\partial\Om\cup \partial_\infty\Om\rightarrow \R$ satisfying the
vertical minimal hypersurface equation $(\ref{minequ})$ on
$\Omega$.
\end{theorem}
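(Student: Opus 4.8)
The plan is to run the Perron process of \thmref{perron}, feed it the barriers already constructed, and derive uniqueness from the maximum principle. First I would check that the problem $(P)$ attached to $g$ admits a bounded supersolution: in the ball model $\Om\cup\partial\Om\cup\partial_\infty\Om$ is a closed subset of the compact set $\ov{\hi n}$, hence compact, so the continuous function $g$ is bounded; then $\phi\equiv\sup g$ is a bounded supersolution and the constant $\inf g$ lies in $\Sc_\phi$, so $\Sc_\phi\neq\emptyset$. Setting $u(x)=\sup_{\vphi\in\Sc_\phi}\vphi(x)$ and invoking \thmref{perron}-(\ref{Item.minequ}), we get $u\in C^2(\Om)$ with $\Mc(u)=0$ on $\Om$.

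Next I would handle the two parts of the boundary separately. For an asymptotic point $p\in\partial_\infty\Om$: since $g$ is continuous everywhere, \thmref{perron}-(\ref{Item.asym.bound}) applies with no convexity hypothesis (its upper and lower barriers are copies of $M_1$), giving $u(q)\to g(p)$ as $q\to p$. For a finite point $p\in\partial\Om$: the domain is $C^0$ convex at $p$, so \thmref{Sch.B} furnishes a barrier at $p$ built from rotational Scherk hypersurfaces, and then \thmref{perron}-(\ref{Item.fin.bound}) --- equivalently \thmref{perron}-(\ref{bco}) together with the strengthening in \thmref{Sch.B} --- gives $u(q)\to g(p)$ as $q\to p$. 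Hence $u$ extends to a continuous function on $\Om\cup\partial\Om\cup\partial_\infty\Om$ equal to $g$ on $\partial\Om\cup\partial_\infty\Om$ and solving \eqref{minequ} on $\Om$.

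For uniqueness I would take two such continuous extensions $u_1,u_2$; being continuous on the compact set $\Om\cup\partial\Om\cup\partial_\infty\Om$, they are bounded. Applying the maximum principle involving the asymptotic boundary from \remref{clm}, with $g_1=g_2=g$ and using $\lim_{q\to p}u_i(q)=g(p)$ for each $p\in\partial_\infty\Om$, one gets $u_1\leqs u_2$, and by symmetry $u_2\leqs u_1$, so $u_1=u_2$.

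The substantive work has already been carried out in \thmref{perron} and \thmref{Sch.B}; the only point needing care here is that the two kinds of barrier --- the $M_1$-barrier at asymptotic points and the rotational Scherk barrier at $C^0$ convex finite points --- coexist when $\Om$ is simultaneously unbounded and merely $C^0$ convex. This raises no difficulty, since the Perron solution $u$ is produced by a single global construction, while each barrier argument is purely local at the boundary point considered.
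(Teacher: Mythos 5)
Your proposal is correct and follows essentially the same route as the paper: boundedness of $g$ via compactness in the ball model, the Perron process of \thmref{perron}, rotational Scherk barriers from \thmref{Sch.B} at finite $C^0$ convex boundary points, the $M_1$-barriers of \thmref{perron}-(\ref{Item.asym.bound}) at asymptotic points, and uniqueness from the maximum principle involving the asymptotic boundary. No gaps.
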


\begin{proof}
Notice that working in the ball model of hyperbolic space, we have
that $g$ is a continuous function on a compact set, hence $g$ is
bounded. Therefore there
 exist supersolutions and subsolutions for the
  Dirichlet problem. The
proof is a consequence of the  Perron process (\thmref{perron})
and the constructions of barriers, using the rotational Scherk
hypersurfaces (Theorem \ref{Sch.B}) at any point of $\partial\Om$,
and using  $M_1$ at any point of $\partial_\infty\Om$ (Theorem
\ref{perron}-(\ref{Item.asym.bound})).  Uniqueness follows from
the maximum principle.
\end{proof}

\section{Existence of minimal graphs over non convex admissible domains}\label{nca}

We will establish some existence of minimal graphs on certain
admissible domains and certain asymptotic boundary, in the same
way as in \cite[Theorem 5.1 and Theorem 5.2]{SE-T}. The proofs are
the same as in the two-dimensional situation, using the
$n$-dimensional catenoids and the $n$-dimensional translation
hypersurfaces $M_d$ obtained for $n\geqs 3$ in \cite{B-SE}.
Therefore we will just state the related definitions and the
theorems without proof.

\begin{definition}[{\bf {\em Admissible unbounded domains in $\hip^n$}}]\label{adm}
${}$
 {\em
 Let $\Om \subset \hi n$ be an unbounded domain.
We say that $\Om$ is an {\em admissible domain} if each connected
component $C_0$ of $\partial \Om$ satisfies the {\em Exterior
sphere of $($uniform$)$ radius $\rho$ condition}, that is,  at any
point $p\in C_0$ there exists a sphere $S_\rho$ of radius $\rho$
such that $p\in C_0\cap S_\rho$ and $\overl{\inter
 S_\rho}\cap \Om=\emptyset.$

 If $\Om$ is an unbounded admissible domain then  we denote
 by $\rho_\Om$   the supremum of the set of these $\rho.$}

\end{definition}

Let us write down a formula obtained in \cite{B-SE} that is useful
in the sequel. Let $t=\la(a, \rho),\, \rho\geq a$, be the height
function of the upper half-catenoid in $\hip^n\times \R.$ Then as
$\rho$ goes to infinity $\la(a, \rho)$ goes to $R(a)$ where $R(a)$
is given by

\begin{equation*}\label{E-cat3-16a}
R(a) := \sinh(a) \int_1^{\infty} \big( \sinh^2(a) s^2 + 1
\big)^{-1/2} \big( s^{2n-2}-1 \big)^{-1/2} \, ds.
\end{equation*}
Furthermore, the function $R$ increases from $0$ to $\pi/ (2n-2)$
when $a$ increases from $0$ to $\infty$. This means that the
catenoids in the family have finite height bounded from above by
$\pi / (n-1)$ (\cite[Proposition 3.2]{B-SE}). We set $f(\rho):=
R(\rho)$.

\begin{theorem}\label{t1}
Let $\Om\subset\hip^n$ be an admissible unbounded domain.  Let \\
$g: \partial\Om\cup\partial_\infty\Om \rightarrow \R$ be a
continuous function
 taking zero  boundary value data on $\partial\Om$.
 Let $\Gai\subset \pain \hip^n \times \R$ be
 the graph of $g$ restricted to $\partial_\infty\Om$.
 \\
If   the height function  $t$ of $\Gai$ satisfies
$-f(\rho_\Om)\leqs t\leqs f(\rho_\Om),$ then there exists a
vertical minimal graph over $\Om$ with finite boundary $\partial
\Om$ and asymptotic boundary $\Gai.$

\vskip1mm

Furthermore, there is no such minimal
 graph, if  $\partial \Om$ is compact and the height function $t$  of $\Gai $
satisfies $\vert t \vert >\pi/(2n-2)$.


\end{theorem}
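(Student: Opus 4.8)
The statement has two parts: an existence result and a non-existence result. For existence, the plan is to apply the Perron process (\thmref{perron}) on the admissible unbounded domain $\Om$, so the only work is to produce barriers at every point of $\partial\Om\cup\partial_\infty\Om$. At a finite boundary point $p\in\partial\Om$, the exterior sphere condition of radius $\rho_\Om$ furnishes a round ball $S_\rho$ (for any $\rho<\rho_\Om$) touching $\partial\Om$ at $p$ with $\overline{\inter S_\rho}\cap\Om=\emptyset$; a translated copy of the half-catenoid $M_d$ (for $d>1$ suitably chosen), or more precisely a vertically translated piece of the catenoid whose waist lies on that exterior sphere, serves as an upper (resp. lower) barrier at $p$ with value $g(p)=0$, exactly as in \cite[Theorem 5.1]{SE-T}. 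At an asymptotic point $p\in\partial_\infty\Om$ the barrier is again built from a copy of $M_1$ — just as in \thmref{perron}-(\ref{Item.asym.bound}) — together with, if needed, a horizontally translated catenoid to control the height near $\partial_\infty\Om$; the condition $-f(\rho_\Om)\leqs t\leqs f(\rho_\Om)$ on the asymptotic height function $t$ of $\Gai$ is precisely what guarantees that a catenoid of the family (whose limiting height at infinity is $R(a)=f(a)$, with $a$ ranging up to $\rho_\Om$) can be placed so that it dominates (resp. is dominated by) $g$ along $\partial_\infty\Om$ while taking the correct finite value $0$ along $\partial\Om$. Once every boundary and asymptotic point admits a barrier, \thmref{perron} produces the desired vertical minimal graph over $\Om$ with finite boundary $\partial\Om$ and asymptotic boundary $\Gai$.

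For the non-existence part, assume $\partial\Om$ is compact and the height $t$ of $\Gai$ satisfies $|t|>\pi/(2n-2)$ at some asymptotic point. The idea is to use a maximal catenoid as an obstruction. Since the catenoids in the family have height at infinity $R(a)$ increasing to $\pi/(2n-2)$, but never attaining it, and total height bounded by $\pi/(n-1)$, one can slide a (full or half) catenoid so that its compact finite boundary circle lies outside a large ball containing $\partial\Om$, hence strictly above (resp. below) the hypothetical solution $u$ along $\partial\Om$, while its asymptotic boundary is a horizontal slice at height close to $\pm\pi/(2n-2)$. Applying the maximum principle of \remref{clm} (the version involving the asymptotic boundary) between $u$ and this catenoid forces a contradiction with $|t|>\pi/(2n-2)$: the graph of $u$ would have to stay below the catenoid everywhere, yet its asymptotic height exceeds the catenoid's asymptotic height. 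This is exactly the argument of \cite[Theorem 5.2]{SE-T} carried over verbatim to dimension $n$ using the $n$-dimensional catenoids of \cite{B-SE}.

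The main obstacle — and the only place requiring genuine care — is the barrier construction at asymptotic points in the existence part: one must verify that the freedom to translate copies of $M_1$ and of the catenoids, combined with the sharp bound $|t|\leqs f(\rho_\Om)$, suffices to dominate $g$ both on the finite boundary (where $g=0$) and asymptotically, simultaneously and for each point $p\in\partial_\infty\Om$. Since the catenoid's asymptotic height $R(a)=f(a)$ is a strictly increasing bijection onto $(0,\pi/(2n-2))$ and the exterior-sphere radius $\rho_\Om$ controls which $a$ are admissible via the placement of the waist, the inequality $-f(\rho_\Om)\leqs t\leqs f(\rho_\Om)$ is exactly the compatibility condition making this possible; the rest is a routine adaptation of the two-dimensional arguments, so we omit the details and refer to \cite[Theorems 5.1 and 5.2]{SE-T}.
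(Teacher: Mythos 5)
Two preliminary remarks on the existence half, which otherwise follows the route the paper intends (the paper itself gives no proof here: Section~\ref{nca} states the results and defers to the two-dimensional arguments of \cite{SE-T}, transplanted via the $n$-dimensional catenoids and translation hypersurfaces of \cite{B-SE}). First, your phrase ``half-catenoid $M_d$ ($d>1$)'' conflates two different families: $M_d$, $d>1$, are the translation hypersurfaces, which are the barriers for Theorem~\ref{t2} (E-admissible domains, bound $H(r_\Om)$); for Theorem~\ref{t1} the relevant barrier is the rotational catenoid, as you say in your self-correction. Second, the hypothesis $-f(\rho_\Om)\leqs t\leqs f(\rho_\Om)$ is not needed for the barriers at points of $\partial_\infty\Om$: there the copies of $M_1$ already give barriers for \emph{any} bounded data continuous at the point, by \thmref{perron}-(\ref{Item.asym.bound}). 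Where the hypothesis really enters is at the finite boundary points: the half-catenoid whose waist sphere is the exterior tangent sphere of radius $\rho<\rho_\Om$ at $p\in\partial\Om$ vanishes at $p$, is nonnegative on $\Om$, and tends to $f(\rho)$ at infinity, so it is a supersolution (resp.\ its reflection a subsolution) of problem $(P)$ on all of $\Om$ in the sense of \defref{probP} -- hence a barrier at $p$ in the sense of \defref{d1}-(\ref{D.Item.bar.2}) -- \emph{precisely because} $|t|\leqs f(\rho_\Om)$ makes condition iii) of \defref{probP} hold. Your text states the right inequality but attaches it to the asymptotic points, which blurs where the sharp constant is used.

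The genuine gap is in the nonexistence part. The argument you describe -- place a catenoid with waist far from $\partial\Om$, ``apply the maximum principle of \remref{clm} (the version involving the asymptotic boundary)'' to conclude that $u$ stays below the catenoid, and then contradict this with $|t|>\pi/(2n-2)$ -- is circular: in \remref{clm} the asymptotic inequality $\limsup u\leq\liminf(\text{catenoid})$ at \emph{every} point of $\partial_\infty\Om$ is a \emph{hypothesis}, and it is exactly the hypothesis that fails when $|t|>\pi/(2n-2)>R(a)$; so no comparison, and no contradiction, can be extracted this way. The finite-boundary side of your comparison is equally uncontrolled: if the waist sphere lies outside a large ball containing $\partial\Om$, it lies inside $\Om$, where the values of $u$ are a priori arbitrary (certainly not below the catenoid's zero waist value), and the catenoid is not even defined over $\partial\Om$, so ``strictly above $u$ along $\partial\Om$'' carries no comparison content. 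A correct proof -- the one the paper points to in \cite{SE-T} -- is of a different nature: one uses the \emph{complete} catenoid (total height $2R(a)<\pi/(n-1)$), exploits the compactness of $\partial\Om$ to translate it vertically so that throughout the motion it stays at heights strictly between $0$ and $\inf|t|$ and hence never meets the finite boundary $\partial\Om\times\{0\}$, rules out a first interior touching point by the tangency principle for minimal hypersurfaces, and derives the contradiction from the fact that the connected graph must join its boundary at height $0$ to an asymptotic boundary lying above $\pi/(2n-2)$, hence must cross the catenoid; the delicate point, which your sketch does not address at all, is excluding a ``contact at infinity'' while the catenoid's asymptotic slices sweep past $\Gai$. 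As written, the second half of your proposal does not establish the nonexistence statement and needs to be redone along these lines.
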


\begin{definition}[\bf {\em E-admissible unbounded domains in
$\hip^n$}]\label{d3} ${}$

 {\em
 Let $\Om$ be an unbounded domain in
$\hip^n$ and let $\partial \Om$ be its boundary.
 We say that
$\Om$ is an {\em E-admissible domain} if
 there exists $r>0$ such that  each point of $\partial \Om$
satisfies the {\em exterior equidistant hypersurface of
$($uniform$)$ mean curvature $\tanh r$ condition}; that is,  at
any point $p\in
\partial \Om$ there exists an equidistant hypersurface  $E_{r}$ of a geodesic
hyperplane, of mean curvature $\tanh r$ (with respect to the
exterior unit normal to $\Om$ at $p$), with $p\in
\partial\Om\cap E_r$ and $ E_r\cap \Om=\emptyset.$

If $\Om$ is an unbounded E-admissible domain then  we denote
 by $r_\Om$ $\geqs 0$   the infimum  of the set of these $r.$
 If $\Om$ is a convex E-admissible domain then $r_\Om=0$.

}
\end{definition}
Thus every E-admissible domain is an admissible domain.

If $\Om$ is a convex domain  then $\Om$ is an E-admissible domain.

If each connected component $C_0$ of $\partial \Om$ is  an
equidistant hypersurface then $\Om$ is an E-admissible (maybe non
convex) domain.

Let us write down again some formulas extracted from \cite{B-SE}.
Up to a vertical translation, the height $t=\mu_{+}(a,\rho)$ of
the translation hypersurface $M_d$, $d>1$, is given by

\begin{equation*}\label{E-trans3-6}
\mu_{+}(a,\rho) = \cosh(a) \, \int_1^{\cosh(\rho)/\cosh(a)}
(s^{2n-2} - 1)^{-1/2} \, (\cosh^2(a) s^2 - 1)^{-1/2} \, ds.
\end{equation*}\bigskip

These integrals converge at $s=1$ and when $\rho \to +\infty$,
with limit value
\begin{equation*}\label{E-trans3-7}
T(a) := \cosh(a) \, \int_1^{\infty} (s^{2n-2} - 1)^{-1/2} \,
(\cosh^2(a) s^2 - 1)^{-1/2} \, ds.
\end{equation*}
$T$ is a decreasing function of $a$, which tends to infinity when
$a$ tends to zero (when $d>1$ tends to $1$) and to $\pi/(2n-2)$
when $a$ (or $d$) tends to infinity (\cite[Equations 3.55, 3.56,
3.57]{B-SE}).

We set $H(r):= T(r).$

\begin{theorem}\label{t2}

Let $\Om\subset\hip^n$ be an E-admissible unbounded domain.  Let
 $g: \partial\Om\cup\partial_\infty\Om \rightarrow \R$ be a
continuous function
 taking zero  boundary value data on $\partial\Om$.
 Let $\Gai\subset \pain \hip^n \times \R$ be
 the graph of $g$ restricted to $\partial_\infty\Om$.
 \\
If   the height function  $t$ of $\Gai$ satisfies $-H(r_\Om)\leqs
t\leqs H(r_\Om),$ then there exists a vertical minimal graph over
$\Om$ with finite boundary $\partial \Om$ and asymptotic boundary
$\Gai.$

\end{theorem}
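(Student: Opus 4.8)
The plan is to apply the Perron process of \thmref{perron} directly to the unbounded domain $\Om$ with boundary data $g$, following the two-dimensional argument of \cite[Theorem 5.2]{SE-T} step by step and replacing the plane catenoids and translation surfaces used there by their $n$-dimensional analogues, whose geometry is described in \secref{S.Md} and in \cite{B-SE}. First I would observe that, in the ball model of $\hip^n$, the datum $g$ vanishes on $\partial\Om$ and is bounded by $H(r_\Om)$ in absolute value on $\partial_\infty\Om$, so $g$ is a bounded function; hence the constant functions $\sup g$ and $\inf g$ are a supersolution and a subsolution of the Dirichlet problem $(P)$, and \thmref{perron}-(\ref{Item.minequ}) produces a function $u$, of class $C^2$ on $\Om$, solving the minimal equation \eqref{minequ}, with $\inf g\leqs u\leqs\sup g$ on $\Om$. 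I would then note that, by \thmref{perron}-(\ref{Item.asym.bound}), this $u$ already extends continuously to every point of $\partial_\infty\Om$ with the prescribed asymptotic value, because the $M_1$-barriers constructed there use only the local geometry near an ideal point and no hypothesis on $\Om$. Thus the whole matter reduces to exhibiting a barrier, in the sense of \defref{d1}, at each finite boundary point $p\in\partial\Om$, where the value to be attained is $0$.

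This is where E-admissibility and the height bound enter. I would fix $p\in\partial\Om$ and take $r$ slightly larger than $r_\Om$; by \defref{d3} there is an equidistant hypersurface $E_r$ of a geodesic hyperplane $\Pi$, of mean curvature $\tanh r$, tangent to $\partial\Om$ at $p$, with $E_r\cap\Om=\emptyset$ and $\Om$ lying locally in the exterior (mean convex) component of $\hip^n\setminus E_r$. Setting $d=\cosh^{n-1}(r)>1$, the hypersurface $M_d$ is precisely a bi-graph over the exterior of $E_r$, its upper (resp. lower) half being the graph of a function equal to $0$ on $E_r$ and taking, over the exterior, values in $[0,H(r))$ (resp. $(-H(r),0]$), where $H(r)=T(r)$ is the finite limiting height recalled just before the statement. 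I would translate these two halves vertically by $\pm\frac{1}{k}$ and restrict them to a small neighborhood $\Nc_k\cap\Om$ of $p$, obtaining an upper and a lower barrier at $p$: they solve \eqref{minequ}; on the part of $\partial\Om$ near $p$ they dominate (resp. are dominated by) $g\equiv 0$, since there $\partial\Om$ lies in the closed exterior of $E_r$ and meets $E_r$ only at $p$; on the interior part of $\partial\Nc_k$ they separate from $u$ by construction; and their value at $p$ tends to $0$ as $k\to\infty$. The bound $|t|\leqs H(r_\Om)$ would be used exactly in this separation step: since $|u|\leqs H(r_\Om)$ throughout $\Om$, the bounded comparison hypersurfaces, whose heights run up to $H(r)\to H(r_\Om)$, do cap $u$ from above and below near each finite boundary point; the borderline case $|t|=H(r_\Om)$ would be recovered from the strict case by letting $r\downarrow r_\Om$ and invoking the compactness principle of \remref{spr}.

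With a barrier at every finite point (just constructed) and at every asymptotic point (\thmref{perron}-(\ref{Item.asym.bound})), \thmref{perron}-(\ref{Item.fin.bound}) would furnish the continuous extension of $u$ realizing the values $0$ on $\partial\Om$ and the asymptotic graph $\Gai$, so that the graph of $u$ is the minimal hypersurface sought. The step I expect to be the main obstacle is not conceptual but one of faithful transcription: one must confirm that the higher-dimensional surfaces $M_d$, $d>1$, behave qualitatively like their planar models — bi-graphs over the exterior of an equidistant hypersurface, with the sharp finite limiting height $T(r)$ — which is the content imported from \cite[Section 3]{B-SE}, and one must reproduce the somewhat delicate compatibility of these bounded barriers with the Perron solution in the unbounded setting, carried out in dimension two in \cite[Section 5]{SE-T}.
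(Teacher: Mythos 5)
Your overall plan does match what the paper intends (it gives no proof for this theorem, deferring to the two-dimensional argument of \cite{SE-T} with the $n$-dimensional hypersurfaces $M_d$, $d>1$, of \cite{B-SE} in place of the planar translation surfaces), but the way you deploy the $M_d$'s at a finite boundary point contains a genuine gap. You restrict the two halves of $M_d$, vertically translated by $\pm 1/k$, to a small neighborhood $\Nc_k$ of $p$ and invoke the barrier notion of \defref{d1}-(\ref{D.Item.bar.1}). This cannot work: that definition requires $\omega_k^+\geq M$ on $\partial\Nc_k\cap\Om$ for \emph{every} $M>0$, while your comparison hypersurfaces are globally bounded by $H(r)$ and, worse, are close to $0$ on a small neighborhood of $p$ (the upper half of $M_d$ vanishes on $E_r\ni p$ and reaches heights near $H(r)$ only at infinity). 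Nor is there any "separation from $u$ by construction'' on $\partial\Nc_k\cap\Om$: before boundary regularity is established, the Perron solution is only known to satisfy $\inf g\leq u\leq\sup g$ there, and $\sup g$ may be as large as $H(r_\Om)$, far above the local values of your barrier. The hypothesis $|t|\leq H(r_\Om)$ cannot rescue a local comparison near $p$; its role is to make a \emph{global} comparison possible. The correct deployment (and the natural transcription of \cite{SE-T}) is: since $E_r\cap\Om=\emptyset$ and $\Om$ is connected, $\Om$ lies entirely in the exterior of $E_r$, so the upper (resp.\ lower) half of $M_d$ is defined on all of $\overline\Om$, is a solution of \eqref{minequ} there, is $\geq 0=g$ (resp.\ $\leq 0$) on $\partial\Om$, and its asymptotic liminf (resp.\ limsup) dominates (resp.\ is dominated by) $g$ on $\pain\Om$ precisely because $|t|\leq H(r_\Om)\le H(r)$ when one can take $r=r_\Om$; hence it is a supersolution (resp.\ subsolution) of problem $(P)$, it sandwiches $u$ by \remref{r1}-(\ref{Item.bounded}), and, being continuous at $p$ with value $0=g(p)$, it is a barrier in the sense of \defref{d1}-(\ref{D.Item.bar.2}), after which \thmref{perron}-(\ref{Item.fin.bound}) concludes.

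Two further points in your write-up need repair along the same lines. First, your choice "$r$ slightly larger than $r_\Om$'' combined with the decrease of $H=T$ gives barriers of asymptotic height $H(r)<H(r_\Om)$, which do not dominate data with $H(r)<|t|\leq H(r_\Om)$; your proposed fix, "let $r\downarrow r_\Om$ and invoke the compactness principle,'' has nothing to act on, since for such data no comparison solution dominating $g$ exists for $r>r_\Om$. The remedy is to observe that the infimum in \defref{d3} is attained (extract a convergent subsequence of the exterior equidistant hypersurfaces at $p$ as $r\to r_\Om$; the limit is again exterior and tangent at $p$), so one works directly with $r=r_\Om$ when $r_\Om>0$, and when $r_\Om=0$ one simply chooses $a>0$ with $T(a)\geq\sup|g|$. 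Second, a caveat even for the global comparison: at asymptotic points of $\Om$ lying on $\pain E_r$ the liminf of the upper half of $M_d$ is not $H(r)$ but can drop to $0$, so the supersolution property there must be checked (typically such points are limits of $\partial\Om$, forcing $g=0$ there by continuity); your proposal does not address this, and any faithful adaptation of \cite{SE-T} should. (Minor: $\Om$ lies on the non-mean-convex side of $E_r$, not the mean convex one, since the mean curvature is computed with respect to the exterior normal to $\Om$.)
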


\section{Minimal graphs in $\R^ {n+1}=\R^ n\times
\R$.}

We will write-down in this section some natural extensions of the
previous constructions to obtain minimal graphs
 in the $n+1$- Euclidean space. The proof of the related results
 for minimal graphs in $\R^ {n+1}$
 are {\em mutatis mutandis} the same as in $\hip^ n\times \R$, but simpler. So we will just
 summarize them.

  The dictionary to perform the understanding of the
 structure of the proofs is as follows: The hypersurface corresponding
 to  the family $M_d$ ($d<1$) to provide barriers at a strictly convex point
 for minimal solutions
 when the ambient space is $\hip^ n \times \R$ is the family of
 hyperplanes in $\R^ {n+1}.$ The hypersurface corresponding to
 $M_1$ to get height estimates at a compact set in the domain
 $\Omega$ is now the family of $n$-dimensional catenoids.

  The reflection principle for minimal graphs in Euclidean space can be proved in
 the same way as in Lemma \ref{ref}.  Finally
 we note that the
 Perron process  is classical in Euclidean space.

 \vskip2mm

 We now consider {\em special rotational domain} in $\R^ n$. The
 definition is analogous to Definition \ref{special domain}.
Now the curve $\ga$ is a straight line and we choose a smooth
curve $\alpha\subset P$ joining $A_0$ and $A_1$ such that the
hypersurface $\Sigma$ generated by rotating $\alpha$ with respect
to $\gamma$ has the following properties.
\begin{enumerate}
\item $\Sigma$ is smooth except possibly  at point $A_0$.
\item $\Sigma$ is strictly convex.
\item \label{transv1} $\inter (\Sigma)\setminus \{A_0\}$ is
transversal to
 the parallel lines to $\ga$.
\end{enumerate}

 We recall the minimal equation in
 $\R^ {n+1}$:

 $$
 \diver\left(\frac{\nabla u}{W(u)}\right):= \sum\limits_{i=1}^n\frac {\partial}{\partial
x_i}\left(\frac{ u_{x_i}}{\sqrt{1 +\|\nabla u\|^2_{{\R^n}}}}
\right)=0
 $$

 (just make $\la=1$ and $H=0$ in Equation (\ref{meaequ})). Explicitly, we
 have that the minimal equation in $\R^ {n+1}$ is given by
$$
\sum\limits_{i=1}^n\left(1 +(u_{x_1}^2
+\cdots+\wh{u_{x_i}^2}+\cdots +u_{x_n}^2)\right)u_{x_i x_i} -
2\sum\limits_{i<k}u_{x_i}u_{x_k}u_{x_i x_k}=0
$$

\begin{theorem}[\bf{\em Rotational Scherk hypersurface}] \label{special ScherkE}
 Let $\Dc_\Sigma\subset \R^ n$ be a
special rotational domain. There is a unique solution $v$ of the
vertical minimal equation in $\inter (\Dc_\Sigma)$ which extends
continuously to   $\inter (\Sigma)$, taking prescribed zero
boundary value  and taking prescribed boundary value $\infty$ for
any approach to $U_\Sigma$.

More precisely,  the following Dirichlet problem admits a unique
solution $v$.
\begin{equation*}
\begin{cases}
 \sum\limits_{i=1}^n\frac {\partial}{\partial
x_i}\left(\frac{ u_{x_i}}{\sqrt{1 +\|\nabla u\|^2_{{\R^n}}}} \right)= 0 \  \mathrm{ on }\  \inter (\Dc_\Sigma), \\
                             u = 0 \   \mathrm{ on }\  \inter (\Sigma), \\
                             u= +\infty \ \mathrm{ on }\  U_\Sigma,\\
                             u\in C^2\left(\inter (\Dc_\Sigma)\right) \cap
                      C^0\left(\Dc_\Sigma \setminus \ov{U}_\Sigma \right).
            \end{cases}
\end{equation*}
We call the graph of $v$ in $\R^ {n+1}$ a {\em rotational Scherk
hypersurface}.

\end{theorem}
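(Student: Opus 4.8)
The plan is to follow the argument of Theorem \ref{special Scherk} verbatim, replacing $\hip^n$ by $\R^n$ and each structural ingredient by its Euclidean counterpart as indicated in the dictionary above: the family $M_d$ ($d<1$) is replaced by the family of hyperplanes in $\R^{n+1}$ (these are minimal and serve as barriers at strictly convex points), and the graph $M_1$ used to obtain height estimates on compact sets is replaced by the family of $n$-dimensional catenoids in $\R^{n+1}$. Since the Perron process is classical in Euclidean space and the reflection principle has already been noted to carry over, all the abstract machinery is available.

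First I would establish the Euclidean analogue of Proposition \ref{special t}, namely that for each $t\in\R$ the Dirichlet problem with zero data on $\inter(\Sigma)$ and value $t$ on $U_\Sigma$ has a unique solution $v_t$ satisfying $0<v_t<t$ on $\inter(\Dc_\Sigma)$, and that the family is strictly increasing in $t$. The existence follows from the auxiliary problem $(\widehat P_k)$ exactly as before: using a tilted hyperplane through $U_\Sigma$ in place of a copy $M_{d_k}$ as a lower barrier near $U_\Sigma$, the constant $t$ as an upper barrier there, and strict convexity of $\Sigma$ elsewhere, the Perron process produces solutions $w_k$ with $0\leqs w_k\leqs t$; the reflection principle across the hyperplane $\Pi\supset U_\Sigma$ turns points of $U_\Sigma$ into interior points with a uniform bound, and the classical elliptic compactness principle yields $v_t$ in the limit. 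Uniqueness and monotonicity are obtained by the divergence-theorem argument of \cite[Theorem 2.2]{HRS} adapted as in the proof of Proposition \ref{special t}, or by the translation-and-maximum-principle argument, both of which only use the transversality property (\ref{transv1}) and the strict convexity of $\Sigma$.

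Next, letting $t\to\infty$, I would produce $v_\infty$ as the limit of a subsequence of $\{v_t\}$. The essential point is a uniform height estimate on compact subsets of $\inter(\Dc_\Sigma)$: choose a catenoid in $\R^{n+1}$ that is a graph over the complement of a small ball, positioned so that its (finite) boundary circle lies over a sphere enclosing $U_\Sigma$ and it takes boundary value $\to+\infty$ there; by the maximum principle this catenoid graph lies above every $v_t$ on $\Dc_\Sigma$, which bounds the family from above on interior compacta. By the compactness principle a subsequence converges to a solution $v_\infty$ of the minimal equation, and strict monotonicity of $\{v_t\}$ forces $v_\infty=+\infty$ on $U_\Sigma$; strict convexity of $\Sigma$ gives the barrier (hyperplanes) that makes $v_\infty$ extend continuously to $\inter(\Sigma)$ with value $0$. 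Uniqueness of $v_\infty$ is proved exactly as the monotonicity statement in Proposition \ref{special t}.

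The step I expect to be the main obstacle is the height estimate that controls $\{v_t\}$ from above on interior compact sets as $t\to\infty$: in the hyperbolic case this was supplied by the specific, semi-global graph $M_1$, and in Euclidean space one must verify that the catenoid family really does furnish a supersolution with the correct boundary behaviour above $U_\Sigma$ (finite height of the Euclidean catenoid, and correct placement of its neck and boundary circle relative to $\Dc_\Sigma$). Everything else — existence of $v_t$, uniqueness, monotonicity, the reflection principle, and the final passage to the limit — is routine adaptation of the arguments already given, as stated in the introduction to this section, so it suffices to record the statement and indicate that the proof is \emph{mutatis mutandis} the same as that of Theorem \ref{special Scherk}.
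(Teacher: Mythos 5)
Your overall route is the paper's own: solve the auxiliary problems $(P_t)$ exactly as in Proposition~\ref{special t}, with hyperplanes playing the role of the $M_d$'s (steeply tilted planes as lower barriers along $U_\Sigma$ in the problems $(\widehat P_k)$, planes as barriers at the strictly convex points of $\Sigma$), use the reflection principle across $\Pi$ and the compactness principle, prove uniqueness and monotonicity by the HRS/translation argument, and then let $t\to\infty$ using a catenoid to get the uniform interior height estimate; the paper's proof is precisely this reduction, stated in two sentences. So the architecture is fine, and you correctly identify the catenoid estimate as the only non-routine ingredient.

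However, the way you justify that estimate contains a genuine error: the property you attribute to the catenoid is false. The half-catenoid graph over the complement of a ball takes a \emph{finite} value on the bounding sphere (its neck), where its gradient blows up; it does not assume boundary value $+\infty$ there, and no solution of the minimal equation over an exterior domain can tend to $+\infty$ on the bounding sphere (infinite data is only possible on a boundary portion of zero mean curvature, which a sphere is not; equivalently this is excluded by the flux inequality). Consequently the naive maximum-principle comparison you invoke does not close: on the part of the boundary of $\Dc_\Sigma$ minus the neck ball that lies on the neck sphere, $v_t$ may be as large as $t$, exceeding the finite, $t$-independent values of the catenoid, so boundary data cannot be compared there (and the catenoid is not even defined over the part of $\Dc_\Sigma$ inside the neck ball, so ``lies above every $v_t$ on $\Dc_\Sigma$'' cannot be meant literally). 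The correct mechanism --- the reason the catenoid replaces $M_1$ --- is its \emph{vertical tangent} along the neck: slide the catenoid vertically above the (bounded) $v_t$ and bring it down; a first contact can occur neither at an interior point (strong maximum principle) nor over the neck sphere, since there the catenoid's tangent plane is vertical while the graph of $v_t$, over interior points of $\Dc_\Sigma$, has non-vertical tangent planes; equivalently one runs the divergence/flux argument, the flux of $\nabla v_t/W$ across the neck sphere being strictly less than its $(n-1)$-volume while the catenoid's equals it. With that, the comparison holds on $\Dc_\Sigma$ minus the neck ball irrespective of the values of $v_t$ on the enclosed boundary portion, which is the height estimate you need; note also that finiteness of the catenoid's total height (true only for $n\geq 3$) is irrelevant here --- since $\Dc_\Sigma$ is bounded, only the infinite gradient at the neck matters, and that holds in every dimension. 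The remainder of your argument then goes through as in Theorem~\ref{special Scherk}.
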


\begin{proof}

We first solve the auxiliary Dirichlet problem ($P_t$) taking zero
boundary value data on the interior of $\Sigma$ and prescribed
boundary value $t$ on $U_\Sigma$, in the same way as in the
Proposition \ref{special t}. On account that the family of
$n$-dimensional catenoids provides an upper and lower barrier to a
solution over any compact set of $\inter (\Dc_\Sigma),$  letting
$t\goto \infty$ we get the desired solution.

 Uniqueness is shown
in the same way as the proof of monotonicity in Proposition
\ref{special t}.
\end{proof}

We observe that this result was also obtained by A. Coutant
\cite{Co} using a different approach.

\begin{theorem}[{\bf{\em Barrier at a $C^0$ convex
point}}]\label{Sch.BE} Let $\Omega\subset \R^ n$ be a domain and
let $p_0\in \partial \Omega$ be a boundary point where $\Omega$ is
$C^0$  convex. Then for any bounded data $g:\partial \Omega
\rightarrow \R$ continuous at $p_0$ the family of rotational
Scherk hypersurfaces  provides a barrier at $p_0$.

\end{theorem}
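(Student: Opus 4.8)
The plan is to transcribe the proof of \thmref{Sch.B} to the Euclidean setting, where it becomes substantially shorter because reflections in hyperplanes of $\m n$ are affine isometries, so the "equidistant hypersurface" bookkeeping of the hyperbolic argument degenerates into elementary statements about parallel affine hyperplanes and lines. As there, I would show that the rotational Scherk hypersurfaces furnished by \thmref{special ScherkE}, taking the value $-\infty$ on $U_\Sigma$, provide an \emph{upper} barrier at $p_0$ in the sense of \defref{d1}-(\ref{D.Item.bar.1}); the lower barrier is obtained symmetrically. Concretely, fix $M>0$ and $k\in\Na^\ast$. Using continuity of $g$ at $p_0$, choose $\e>0$ with $g(p)<g(p_0)+1/k$ whenever $p\in\partial\Om$ and $\dist(p,p_0)<\e$; using $C^0$ convexity, choose a hyperplane $\Pi_{p_0}\ni p_0$ together with a neighbourhood $W\subset\Pi_{p_0}$ of $p_0$ disjoint from $\Om$, and let $\ga$ be the normal line to $\Pi_{p_0}$ at $p_0$. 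Exactly as in \thmref{Sch.B}, I would pick a Euclidean special rotational domain $\Dc_\Sigma$ with axis $\ga$, with $\Pi\perp\ga$ (recall $U_\Sigma\subset\Pi$), with $\diam(\Dc_\Sigma)<\e/4$, with $\ov{\Om}\cap U_\Sigma=\emptyset$, and with $A_0\in\ga$, $\dist(p_0,A_0)<\e/8$, $A_0$ on the same side of $\Pi_{p_0}$ as $\Om$; one arranges moreover $p_0\in(A_0,B)$. Fix $M'>\max\{M,\,g(p_0)+1/k\}$, let $\om$ be the height function of the rotational Scherk hypersurface of \thmref{special ScherkE} with value $M'$ on $\inter(\Sigma)$ and $-\infty$ on $U_\Sigma$, translate its graph along $\ga$ so that $\om(p_0)=g(p_0)+1/k$, and set $\Nc_k=\inter(\Dc_\Sigma)\cap\Om$, $\om_k^+=\om|_{\Nc_k}$.

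Everything then reduces, as in \thmref{Sch.B}, to two monotonicity statements about $\om$: that $\om$ is non-increasing along $[A_0,B]$ oriented from $A_0$ to $B$, and that $\om$ is non-decreasing from $D$ to $C$ along any segment $[D,C]$ with $D\in(A_0,B)$, $C\in\inter(\Sigma)$ and $[D,C]\perp[A_0,B]$ at $D$. Both follow from Alexandrov reflection across the hyperplane $\Pi_0$ bisecting the two points in question, combined with the maximum principle \remref{clm}: the key geometric point is that the reflection in $\Pi_0$ of the relevant half $\Dc_\Sigma^+$ of $\Dc_\Sigma$ stays clear of the opposite half of $\Sigma$, so that the reflected copy of the part $S^+$ of the Scherk graph over $\Dc_\Sigma^+$ is a graph of some $v$ over a subdomain of $\Dc_\Sigma^-$ with $v\geq\om$ on the boundary, whence $v\geq\om$ by \remref{clm} and the desired inequality. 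For the axial claim this disjointness is immediate from property (\ref{transv1}) of the Euclidean special rotational domain (transversality of $\inter(\Sigma)\setminus\{A_0\}$ to the lines parallel to $\ga$) together with the convexity of $\Sigma$. For the radial claim, the hyperbolic argument with equidistant curves $E_p$ trivializes: the reflection, in a hyperplane orthogonal to the line $\Gamma\supset[D,C]$, of a line parallel to $\Gamma$ is again such a line, and since $\Dc_\Sigma$ cut by the $3$-dimensional affine subspace spanned by $P$ and a reflected point is a convex Euclidean cone, that line meets $\Sigma$ in at most two points, one on each side of $\Pi_0$; this gives the required disjointness.

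Granting the two claims, for any $q$ in the closure of the component of $\Dc_\Sigma\setminus\Pi_{p_0}$ containing $A_0$ one obtains $\om(q)\geq\om(p_0)=g(p_0)+1/k$; since $\Om$ lies on the $A_0$-side of $\Pi_{p_0}$, the set $\Om\cap\Dc_\Sigma$ lies in that component, so $\om_k^+\geq g(p_0)+1/k\geq g$ on $\partial\Om\cap\Nc_k$, while $\om_k^+=M'\geq M$ on $\partial\Nc_k\cap\Om$ (that part of the boundary lies in $\inter(\Sigma)$ because $\ov{\Om}\cap U_\Sigma=\emptyset$), and of course $\Mc(\om_k^+)=0$ with $\om_k^+(p_0)=g(p_0)+1/k\to g(p_0)$ as $k\to\infty$. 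Hence $(\om_k^+)_k$ is an upper barrier at $p_0$, which finishes the construction. As in the hyperbolic case, the main obstacle is the verification of the two monotonicity claims — precisely, checking that the reflected domains avoid $\Sigma$ — but this is genuinely easier here than in \thmref{Sch.B}, since every reflection involved is affine and $\Sigma$ is built over a convex cone.
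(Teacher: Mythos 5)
Your proposal is correct and takes essentially the same approach as the paper, whose proof of this statement simply says that the argument of \thmref{Sch.B} carries over ``the same, but simpler''; your Euclidean simplifications of Claims 1 and 2 (reflections being affine isometries, lines parallel to the axis replacing equidistant curves, convexity of $\Sigma$ giving the two-point intersection) are exactly the intended ones. The only cosmetic difference is that the paper suggests verifying the monotonicity claims first for the finite-valued auxiliary solutions $v_t$ and then letting $t\to\infty$, whereas you argue directly with the Scherk solution $\om$, just as in the hyperbolic proof.
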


\begin{proof}
 The proof is the same, but simpler, as the proof of Theorem
\ref{Sch.B}. More precisely the proofs of the analogous of Claim 1
and  2 are simpler, passing first by the solution $v_t$ of the
related auxiliary Dirichlet problem $(P_t)$.
\end{proof}

\begin{corollary}[{\bf{\em Rotational Scherk hypersurface}}]\label{coneE}
 Let $\Dc_\Sigma\subset \R^ n$ be a special rotational domain
generated by a segment $\a$ of a straight line. Then:
\begin{enumerate}
\item \label{item ScherkE}  There is a unique solution $v$
of the vertical minimal equation in $\inter (\Dc_\Sigma)$ which
extends continuously to $\inter (\Sigma) \cup U_\Sigma$, taking
prescribed zero boundary value data on the interior of $\Sigma$
and prescribed boundary value $\infty$ on $U_\Sigma$.

We also call the graph of $v$ in $\R^ {n+1}$ a {\em rotational
Scherk hypersurface}.

\vskip2mm

\item Let $\Omega\subset \R^ n$ be a domain and
let $p_0\in \partial \Omega$ be a boundary point where $\Omega$ is
$C^0$  convex. Then for any bounded data $g:\partial \Omega
\rightarrow \R$ continuous at $p_0$ the family of rotational
Scherk hypersurfaces  given in the first statement provides a
barrier at $p_0$.
\end{enumerate}
\end{corollary}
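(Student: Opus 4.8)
The plan is to imitate the proof of Theorem~\ref{special ScherkE}, the only genuinely new point being that the cone $\Sigma$ swept out by a straight segment is convex but not strictly convex, so that $\Dc_\Sigma$ is not a special rotational domain in the Euclidean sense and Theorem~\ref{special ScherkE} does not apply verbatim. What rescues the argument is that the solid truncated cone $\Dc_\Sigma$ is convex, hence \emph{every} point of $\partial\Dc_\Sigma$ is a $C^0$ convex point; consequently Theorem~\ref{Sch.BE} supplies barriers exactly where the proof of Theorem~\ref{special ScherkE} invoked strict convexity of the generating hypersurface.

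For the first statement I would, for each $t>0$, first solve the auxiliary Dirichlet problem $(P_t)$: $\Mc(u)=0$ in $\inter(\Dc_\Sigma)$, $u=0$ on $\inter(\Sigma)$, $u=t$ on $U_\Sigma$, following Proposition~\ref{special t}. One smooths the discontinuous data across the edge $\partial\Sigma$ on a thin collar $V_k$, keeping it in $[0,t]$, and solves the resulting continuous-data problem by the (classical) Euclidean Perron process. The barriers needed are: the constant $t$ from above together with a tilted hyperplane from below (the Euclidean counterpart of the hypersurfaces $M_d$, $d<1$) at the points of $U_\Sigma$ and of the edge $\partial\Sigma$; and --- the sole departure from the argument of Proposition~\ref{special t} --- barriers at the points of $\inter(\Sigma)$, furnished by Theorem~\ref{Sch.BE} since these are $C^0$ convex points. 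After the harmless normalisation $w_k\mapsto w_k-t$ (the equation being invariant under vertical translation), reflecting across the hyperplane $\Pi\supset U_\Sigma$ via the Euclidean analogue of Lemma~\ref{ref} turns the points of $U_\Sigma$ into interior points of the domain of a bounded solution; the interior gradient estimates and the compactness principle then yield a solution $v_t\in C^2(\inter(\Dc_\Sigma))\cap C^0(\inter(\Dc_\Sigma)\cup U_\Sigma)$ of $(P_t)$ with $0<v_t<t$.

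Uniqueness of $v_t$ and its strict monotonicity in $t$ I would obtain exactly as in Proposition~\ref{special t}: by the divergence-theorem argument adapted from \cite{HRS}, and, for monotonicity, by the $\varepsilon$-translation argument along $\ga$, which is available because $\inter(\Sigma)\setminus\{A_0\}$ is transversal to the lines parallel to $\ga$ (property~(\ref{transv1}), valid for a genuine cone). To pass to the limit $t\to\infty$ one needs a $t$-independent upper bound on $\{v_t\}$ over compact subsets of $\inter(\Dc_\Sigma)$; this is provided by a slightly enlarged \emph{curved} rotational Scherk hypersurface: choosing a strictly convex curve $\alpha'$ joining $A_0$ to a point of $L$ farther out, so that $\Dc_{\Sigma'}\supset\Dc_\Sigma$ and $U_{\Sigma'}\supset U_\Sigma$, Theorem~\ref{special ScherkE} gives a solution $v'$ that is finite in $\inter(\Dc_{\Sigma'})$, equals $0$ on $\inter(\Sigma')$ and $+\infty$ on $U_{\Sigma'}$, so that $v_t\leq v'$ on $\Dc_\Sigma$ for every $t$ by the maximum principle. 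The increasing family $\{v_t\}$ therefore converges, by the compactness principle, to the desired $v$, which equals $0$ on $\inter(\Sigma)$ (again by the barriers of Theorem~\ref{Sch.BE}) and tends to $+\infty$ along $U_\Sigma$; uniqueness of $v$ follows from the comparison argument used for the monotonicity of $\{v_t\}$. The second statement is then obtained by repeating the proof of Theorem~\ref{Sch.BE} with the barriers taken from the cone-generated rotational Scherk hypersurfaces just constructed; the two monotonicity properties of the height function $\om$ --- decreasing along the axis segment $[A_0,B]$ and increasing along segments issuing from it orthogonally into $\Sigma$ --- follow from the same reflection-plus-maximum-principle argument, which is considerably simpler in the Euclidean conical setting, where one checks directly that the reflected pieces of $\Dc_\Sigma$ do not meet $\Sigma$ on the wrong side.

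I expect the main obstacle to be the bookkeeping in the two successive limits: verifying that the smoothed data $g_k$ is genuinely continuous at each point of $\inter(\Sigma)$ so that Theorem~\ref{Sch.BE} applies, that the resulting barriers (as well as those at $U_\Sigma$ and $\partial\Sigma$) can be chosen independently of $k$ for large $k$ and hence survive the passages $k\to\infty$ and then $t\to\infty$, and that the enlarged curved Scherk hypersurface $v'$ genuinely dominates the whole family $\{v_t\}$ on the interior.
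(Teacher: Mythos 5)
The paper itself offers no proof of Corollary~\ref{coneE}: it is stated as a consequence to be obtained by running the scheme of Theorem~\ref{special ScherkE} and Theorem~\ref{Sch.BE} again for the cone, and your write-up fills in exactly that scheme correctly -- in particular you correctly identify the one genuinely new point, namely that the lateral face of the truncated cone is convex but not strictly convex, so the barriers at points of $\inter(\Sigma)$ must come from Theorem~\ref{Sch.BE} (curved rotational Scherk graphs) rather than from tilted hyperplanes, while the tilted hyperplane (the Euclidean stand-in for $M_{d_k}$, used as a global subsolution equal to $t$ on $\Pi$) still handles $U_\Sigma$ and the edge, and the reflection across $\Pi$ after subtracting $t$, the compactness principle, and the $\e$-translation argument for uniqueness and monotonicity go through verbatim for a cone. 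The only place where you genuinely depart from the paper's route is the $t$-independent height bound needed to let $t\to\infty$: the paper's ``dictionary'' and its proof of Theorem~\ref{special ScherkE} prescribe the family of $n$-dimensional catenoids as upper and lower barriers over compact subsets of $\inter(\Dc_\Sigma)$, whereas you dominate the whole increasing family $\{v_t\}$ by a single strictly convex rotational Scherk solution $v'$ on an enlarged special rotational domain $\Dc_{\Sigma'}\supset\Dc_\Sigma$ with base in the same hyperplane $\Pi$. Your alternative is valid and arguably cleaner, since it stays entirely inside the Scherk machinery already built: choosing $A_1'$ on $L$ beyond $A_1$ (and, to avoid any fuss at the apex, the vertex of $\alpha'$ on $\gamma$ strictly beyond $A_0$), one gets $\Sigma\setminus\partial\Sigma\subset\inter(\Dc_{\Sigma'})$ and $\partial\Sigma\cup U_\Sigma\subset U_{\Sigma'}$, so on $\partial(\inter\Dc_\Sigma)$ one has $v_t=0\leq v'$ on $\inter(\Sigma)$ and $v'\to+\infty>t\geq v_t$ near $U_\Sigma\cup\partial\Sigma$, and the maximum principle gives $v_t\leq v'$, hence local uniform bounds; the catenoid route buys instead the explicit quantitative height bounds that the paper reuses elsewhere (Theorems~\ref{t1}--\ref{t2}). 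For statement (2) your plan -- rerun the proof of Theorem~\ref{Sch.B}/\ref{Sch.BE} with the cone-generated Scherk graphs, the two monotonicity claims being checked by the same reflection-plus-maximum-principle argument, which for a Euclidean cone reduces to the elementary fact that the reflected piece of the cone has smaller radius at each axial height (Claim 1) and misses $\Sigma$ on the near side (Claim 2) -- is exactly what the paper intends by ``the proof is the same, but simpler''. In short: correct, same overall architecture as the paper's (unwritten) proof, with one legitimate and well-justified substitution at the $t\to\infty$ step.
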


We define the notion of {\em admissible polyhedron } in $\R^ n$ in
the same way as in hyperbolic space, see Definition \ref{pol}. The
following result is proved in the same way as in Theorem
\ref{Sch1}.

\begin{theorem}[{\bf {\em First Scherk type hypersurface in $ \R^ {n+1}$}}]\label{Sch1E}
  Let $\mathcal P$ be an admissible convex polyhedron in $\R^ n$. There
  is a unique solution $v_\infty$ of the vertical minimal
equation in $\inter (\Pc)$ extending continuously  to $\partial
\Pc\setminus  F_0$, taking prescribed zero boundary value data on
$F_1\setminus \partial F_0,\ldots ,F_n\setminus \partial F_0$ and
prescribed boundary value $+\infty$   for any approach to
$\inter(F_0).$ More precisely, we prove existence and uniqueness
of the following Dirichlet problem $(P_\infty)$:

\begin{equation*}
({P}_\infty )\begin{cases} \sum\limits_{i=1}^n\frac
{\partial}{\partial x_i}\left(\frac{ u_{x_i}}{\sqrt{1 +\|\nabla
u\|^2_{{\R^n}}}} \right)= 0\   \mathrm{ on}\ \inter(\Pc),  \\
                             u = 0\  \mathrm{ on }\   F_j\setminus \partial F_0,\,
j=1,\ldots,n,  \\
                             u= +\infty \  \mathrm{ on }\ \mathrm{int}   (F_0),  \\
                             u\in C^2\left(\mathrm{int} (\Pc)\right) \cap
                             C^0\left(\Pc \setminus  F_0\right).
            \end{cases}
\end{equation*}
\end{theorem}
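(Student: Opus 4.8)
\emph{Strategy.} The plan is to transpose the proof of \thmref{Sch1} to the Euclidean setting, using the dictionary of this section: $M_1$ is replaced by the family of $n$-dimensional catenoids --- which enter through the Euclidean rotational Scherk hypersurfaces of \thmref{special ScherkE} --- and the hypersurfaces $M_d$ ($d<1$) are replaced by affine hyperplanes of $\R^{n+1}$. As in the hyperbolic case I would proceed in two steps: first produce, for each $t\in\R$, an intermediate solution $v_t$ carrying the finite value $t$ on $\inter(F_0)$, and then let $t\to\infty$.

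\emph{Step 1: the family $\{v_t\}$.} First I would establish the Euclidean analogue of \coref{vt}: for each $t\in\R$ there is a unique $v_t\in C^2(\inter(\Pc))\cap C^0(\Pc\setminus\partial F_0)$ solving the minimal equation with $v_t=0$ on $F_j\setminus\partial F_0$ ($j=1,\dots,n$), $v_t=t$ on $\inter(F_0)$, $0<v_t<t$ on $\inter(\Pc)$, and $t\mapsto v_t$ strictly increasing. Since $\Pc$ is convex, every boundary point of $\Pc$ is a $C^0$ convex point, so \thmref{Sch.BE} (equivalently \coref{coneE}) provides a barrier, built from the Euclidean rotational Scherk hypersurfaces, at every point of $F_1\setminus\partial F_0,\dots,F_n\setminus\partial F_0$, while on $\inter(F_0)$ the constant $\equiv t$ is an upper barrier. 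To make the solution attain the value $t$ on $\inter(F_0)$ I would argue exactly as in \propref{special t}: solve an auxiliary Dirichlet problem on $\Pc$ with continuous data equal to $0$ on the faces $F_j$ away from $\partial F_0$, equal to $t$ on $\inter(F_0)$, interpolating near $\partial F_0$ while staying above a fixed affine hyperplane (the substitute for the $M_{d_k}$); the Perron process, classical in $\R^n$, yields solutions $w_k$ with $0\le w_k\le t$; the reflection principle for Euclidean minimal graphs (proved as in \lemref{ref}, applied to $w_k-t$, the equation being unchanged under additive constants) across the hyperplane $\Pi_0\supset F_0$ turns the points of $\inter(F_0)$ into interior points and gives bounds independent of $k$; and the compactness principle (classical interior gradient estimates and elliptic regularity) yields $v_t$ in the limit. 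Uniqueness of $v_t$ and the monotonicity of $t\mapsto v_t$ I would get by the divergence/flux argument of \propref{special t} (from \cite{HRS}), using that $\partial\Pc$ is transversal to the vertical translations along the line through $A_0$ and an interior point of $F_0$.

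\emph{Step 2: letting $t\to\infty$.} For a uniform-in-$t$ height bound on compact subsets of $\inter(\Pc)$ I would pick a Euclidean special rotational domain $\Dc_\Sigma$ with $U_\Sigma\subset\Pi_0$ large enough that $\inter(F_0)\subset U_\Sigma$, and with the convex cap $\Sigma$ large enough that $\Pc\subset\Dc_\Sigma$ and $F_j\setminus\partial F_0\subset\inter(\Dc_\Sigma)$ for all $j$; this is possible since $\Pc$ is compact. Let $\omega$ be the rotational Scherk function of \thmref{special ScherkE}, so $0<\omega<\infty$ on $\inter(\Dc_\Sigma)$, $\omega=0$ on $\inter(\Sigma)$ and $\omega\to+\infty$ at $U_\Sigma$. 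Comparing $v_t$ with $\omega$ on $\partial\Pc$ --- excising a small neighborhood of the lower-dimensional set $\partial F_0$ and invoking the maximum principle --- gives $v_t\le\omega$ on $\inter(\Pc)$ for every $t$. Since $t\mapsto v_t$ is increasing and bounded above on compacta, the compactness principle produces a subsequence converging, uniformly on compact subsets of $\inter(\Pc)$, to a solution $v_\infty$ with $0<v_\infty\le\omega<\infty$. Because $v_\infty\ge v_{M+1}$ and $v_{M+1}\to M+1$ approaching $\inter(F_0)$, one gets $\liminf_{q\to p}v_\infty(q)\ge M$ for every $M>0$ and every $p\in\inter(F_0)$, so $v_\infty=+\infty$ there. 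At a point of $F_j\setminus\partial F_0$ ($j\ge1$), a $C^0$ convex point with continuous zero data, the rotational Scherk hypersurfaces again give a barrier (\coref{coneE}), so $v_\infty$ extends continuously with value $0$; hence $v_\infty$ solves $(P_\infty)$. Uniqueness of $v_\infty$ follows as the monotonicity in \propref{special t} (see also \coref{vt}), by the same computation of \cite{HRS}.

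\emph{Expected main obstacle.} Everything here is lighter than in $\hip^n\times\R$. The single delicate point is the height estimate of Step 2: $\R^{n+1}$ carries no minimal graph over a half-space analogous to $M_1$, so $\{v_t\}$ cannot be dominated directly and the bound must be routed through the rotational Scherk hypersurface $\omega$ --- which is exactly where the $n$-dimensional catenoids do their work, inside \thmref{special ScherkE}. The remaining, routine, nuisance is the behavior along the edge set $\partial F_0$, handled by excising a neighborhood of it before applying the maximum principle, together with the flux argument from \cite{HRS}.
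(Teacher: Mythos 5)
Your proposal is correct, and its overall two-step scheme (first the finite-value solutions $v_t$, then $t\to\infty$ with a uniform height estimate and barriers at the convex boundary points) is exactly the scheme the paper intends, since the paper proves \thmref{Sch1E} only by declaring it ``the same as \thmref{Sch1}'' read through the dictionary of the Euclidean section. You deviate in two places, both harmless. First, for the intermediate problem $(P_t)$ you redo the whole machinery of \propref{special t} (auxiliary interpolated data near $\partial F_0$, an affine function replacing $M_{d_k}$ as lower barrier, reflection across $\Pi_0$ applied to $w_k-t$, then compactness); this works, but it is more than the paper needs: since \thmref{Sch.BE}/\coref{coneE} is already available before \thmref{Sch1E}, every point of $\partial\Pc\setminus\partial F_0$ (including $\inter(F_0)$, which is convex but not strictly convex, with locally constant data $t$) admits a barrier, so one can argue directly as in \coref{vt} and skip the auxiliary problems. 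Second, for the key uniform height estimate in Step 2 the paper's dictionary points to using the family of $n$-dimensional catenoids as barriers over compact subsets (as in the proof of \thmref{special ScherkE}), whereas you dominate all the $v_t$ at once by a single rotational Scherk graph $\omega$ over a large special rotational domain $\Dc_\Sigma\supset\Pc$ with $F_0\subset U_\Sigma$ and the side faces in $\inter(\Dc_\Sigma)$; this is legitimate because \thmref{special ScherkE} is already established (that is where the catenoids do their work), the enclosing domain exists since $\Pc$ is a compact convex simplex sitting on $\Pi_0$, and the comparison $v_t\le\omega$ goes through since $\omega\to+\infty$ on all of $F_0$ (including $\partial F_0$, once $F_0$ lies inside the open ball $U_\Sigma$) while $v_t\le t$ there and $v_t=0\le\omega$ on the side faces. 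Your route is arguably cleaner, at the cost of invoking the global object $\omega$ rather than local catenoid barriers. Two cosmetic points: the translations in the monotonicity/uniqueness sliding argument are horizontal translations of $\R^n$ along the line through $A_0$ and an interior point of $F_0$ (not ``vertical''), and for the uniqueness of $v_\infty$ the paper's reference is to the sliding argument of \propref{special t} rather than to the flux computation of \cite{HRS}; either can be made to work, but keep the two arguments distinct.
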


We remark that the above result is also obtained  by A. Coutant
\cite{Co}.

\smallskip

Next theorem can be proved exactly as in Theorem \ref{Sch2}.

\begin{theorem}[{\bf {\em Second Scherk type hypersurface in $
 \R^{n+1} $}}] \label{Sch2E} For any $k\in \Na, \, k\geqs 2,$ there exists a family of
 polyhedron $\Pc_k$ with $2^{n-1} k$  faces and a solution $w_k$
 of the vertical minimal equation in $\inter \Pc_k$ taking
 alternatively infinite values $+\infty$ and $-\infty$ on adjacent faces of
 $\Pc_k$.  Moreover, the polyhedron $\Pc_k$  can be chosen to be
 convex and can also be chosen to be non convex.
\end{theorem}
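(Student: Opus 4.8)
The plan is to reproduce, in the Euclidean setting, the construction carried out in the proof of \thmref{Sch2}, the only changes being that geodesics of $\hip^n$ are replaced by straight lines of $\R^n$, geodesic hyperplanes by affine hyperplanes, equidistant hypersurfaces by parallel hyperplanes, and hyperbolic reflections by ordinary orthogonal reflections of $\R^n$. First I would fix a point $A_0\in\R^n$, a positively oriented orthonormal basis $\{e_1,\ldots,e_n\}$, and for $k\geqs 2$ the unit vector $u:=\sin(\pi/k)e_1+\cos(\pi/k)e_2$. Choosing an interior point $A_1$ on the half-line issuing from $A_0$ in the direction $u$ and interior points $A_j$ on the half-lines directed by $e_j$, $j=2,\ldots,n$, the points $A_0,\ldots,A_n$ are independent in the sense of \defref{pol} and determine an admissible polyhedron $\wt{\Pc}$ with faces $F_0,\ldots,F_n$, where $F_j$ does not contain $A_j$. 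Exactly as in the hyperbolic case one checks the angle relations: $F_1$ and $F_2$ meet along an interior angle $\pi/k$, while $F_j\perp F_1$, $F_j\perp F_2$ and $F_j\perp F_\ell$ for $j,\ell\in\{3,\ldots,n\}$ with $j\neq\ell$. Consequently the Euclidean reflections in the hyperplanes $\Pi_1,\Pi_2$ carrying $F_1,F_2$ leave each $\Pi_j$, $j\geqs 3$, globally invariant.

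Next I would iterate reflections, as in \thmref{Sch2}. Reflecting $\wt{\Pc}$ in $\Pi_2$ makes $F_2$ disappear, produces the mirror $F_1^{\ast}$ of $F_1$ (meeting $F_1$ along angle $2\pi/k$), enlarges each $F_j$ ($j\geqs 3$) inside $\Pi_j$, and creates a second copy $F_0^1$ of $F_0$. Continuing with reflections in $F_1^{\ast}$ and so on, after $2k$ steps both $\Pi_1$ and $\Pi_2$ are swept out and one obtains a polyhedron $\Pc^{+}$ with $2k$ faces issuing from $F_0$ and one face on each $\Pi_j$, $j\geqs 3$. Reflecting successively in $\Pi_3,\ldots,\Pi_n$ — each reflection preserving the remaining $\Pi_\ell$ by the orthogonality relations — doubles the number of $F_0$-type faces at each step and finally yields a polyhedron $\Pc_k$ with $2^{n-1}k$ faces, each an isometric image of $F_0$. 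As in \thmref{Sch2}, the convexity of $\Pc_k$ is governed by the planar $2k$-gon $\Gamma$ obtained by repeatedly reflecting the segment $[A_0,A_1]$ across the line through $[A_0,A_2]$ (with $A_0$ an interior point of $\Gamma$): $\Pc_k$ is convex iff $\Gamma$ is, which holds — $\Gamma$ being then regular — when $d(A_0,A_1)=d(A_0,A_2)$, and fails when $d(A_0,A_1)$ is chosen much larger than $d(A_0,A_2)$.

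Finally I would produce the graph. Starting from $\wt{\Pc}$, \thmref{Sch1E} gives a solution $v_\infty$ of the Euclidean vertical minimal equation on $\inter\wt{\Pc}$, continuous up to $\partial\wt{\Pc}\setminus F_0$, with zero boundary data on $F_1\setminus\partial F_0,\ldots,F_n\setminus\partial F_0$ and $+\infty$ on $\inter F_0$. Since $v_\infty$ vanishes on the interior of every face except $F_0$, the Euclidean reflection principle — the analogue of \lemref{ref}, which as noted in the paper holds verbatim in $\R^{n+1}$ — applies across each face reflected about in the construction, extending the solution analytically by the odd rule $\wt{u}=-u\circ I$. Performing these extensions in the same order as the construction of $\Pc_k$, the sign of the solution flips at each step, so adjacent $F_0$-type faces carry alternately the values $+\infty$ and $-\infty$; this produces the desired $w_k$ on $\inter\Pc_k$. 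The point requiring care — just as in \thmref{Sch2}, and in fact simpler here — is the reflection bookkeeping: one must check that the orthogonality relations are genuinely preserved at each step (this is what allows one to pass from $\wt{\Pc}$ all the way to $\Pc_k$) and that after the full round of $2k$ reflections about the "hinge" at $A_0$ the configuration closes up consistently, so that the alternating infinite boundary values are well defined. Everything else is formally identical to the hyperbolic argument.
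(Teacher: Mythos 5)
Your proposal is correct and follows essentially the same route as the paper: the authors prove Theorem \ref{Sch2E} by declaring it \emph{mutatis mutandis} identical to the proof of Theorem \ref{Sch2}, i.e., building the simplex $\wt{\Pc}$ with the angle relations $\pi/k$ and orthogonality, solving the $+\infty$/$0$ problem via Theorem \ref{Sch1E}, and propagating by the (Euclidean) reflection principle with alternating signs. Your spelled-out verification of the dihedral angles and of the closing-up of the $2k$ reflections about the hinge is exactly the bookkeeping the paper leaves implicit.
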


\begin{remark}
{\em When the ambient space is $\R^ 4$  with the aid of Theorem
\ref{Sch2E} we have a   solution  of the minimal equation in the
interior of an octahedron  in $\R^ 3$ taking alternatively
infinite values $+\infty$ and $-\infty$ on adjacent faces. Indeed,
using the notations of the proof of Theorem \ref{Sch2}, we set
$k=2$ and we choose $A_1,A_2$ and $A_3$ so that
$d(A_1,A_2)=d(A_1,A_3)=d(A_2,A_3)$. Thus the polyhedron $\Pc_2$
obtained is an octahedron. }

\end{remark}

\end{document}